\newtheorem{theorem}{Theorem}[section]
\newtheorem{definition}[theorem]{Definition}
\newtheorem{lemma}[theorem]{Lemma}
\newtheorem{proposition}[theorem]{Proposition}
\newtheorem{remark}[theorem]{Remark}
\newenvironment{proof}[1][Proof]{\noindent \textbf{#1.} }{\  $\Box$}
 \numberwithin{equation}{section}
\begin{document}

\title{Backward Stochastic Differential Equations Driven by $G$-Brownian
Motion}
\author{Mingshang Hu \thanks{%
School of Mathematics, Shandong University, humingshang@sdu.edu.cn} \and %
Shaolin Ji\thanks{%
Qilu Institute of Finance, Shandong University, jsl@sdu.edu.cn } \and Shige
Peng\thanks{%
School of Mathematics and Qilu Institute of Finance, Shandong University,
peng@sdu.edu.cn, Hu, Ji, and Peng's research was partially supported by NSF
of China No. 10921101; and by the 111 Project No. B12023} \and Yongsheng Song\thanks{%
Academy of Mathematics and Systems Science, CAS, Beijing, China,
yssong@amss.ac.cn. Research supported by by NCMIS; Youth Grant of
National Science Foundation (No. 11101406); Key Lab of Random
Complex Structures and Data Science, CAS (No. 2008DP173182).} }
\maketitle
\date{}

\begin{abstract}
In this paper, we study the following of backward stochastic differential
equations driven by a $G$-Brownian motion $(B_{t})_{t\geq 0}$ in the
following form:
\begin{eqnarray*}
Y_{t} &=&\xi
+\int_{t}^{T}f(s,Y_{s},Z_{s})ds+\int_{t}^{T}g(s,Y_{s},Z_{s})d\langle
B\rangle _{s} \\
&&-\int_{t}^{T}Z_{s}dB_{s}-(K_{T}-K_{t}).
\end{eqnarray*}%
Under a Lipschitz condition of $f$ and $g$ in $Y$ and $Z$. The existence and
uniqueness of the solution $(Y,Z,K)$ is proved, where $K$ is a decreasing $G$%
-martingale.
\end{abstract}

\textbf{Key words}: $G$-expectation, $G$-Brownian motion, $G$-martingale,
Backward SDEs

\textbf{MSC-classification}: 60H10, 60H30
%%% ----------------------------------------------------------------------
%%% ----------------------------------------------------------------------

\section{Introduction}

A typical classical Backward Stochastic Differential Equation, BSDE in
short, is defined on a Wiener probability space $(\Omega ,\mathcal{F},P)$ in
which $\Omega $ is the space of continuous paths. A standard Brownian motion
is defined as the canonical process, namely $B_{t}(\omega )=\omega _{t}$,
for $\omega \in \Omega $, together with its natural filtration $\mathbb{F}=(%
\mathcal{F}_{t})_{t\geq 0}$. The problem is to solve a pair of $\mathbb{F}$%
-adapted processes $(Y,Z)$ satisfying the following BSDE
\begin{equation}
Y_{t}=\xi +\int_{t}^{T}g(s,Y_{s},Z_{s})ds-\int_{t}^{T}Z_{s}dB_{s},
\label{fBSDE}
\end{equation}%
where $g$ is a given function, called the generator of (\ref{fBSDE}), and $%
\xi $ is a given $\mathcal{F}_{T}$-measurable random variable called the
terminal condition of the BSDE.

Linear BSDE was introduced by Bismut \cite[1973]{Bismut}. The basic
existence and uniqueness theorem of nonlinear BSDEs, with a
Lipschitz condition of $g$ with respect to $(y,z)$, was obtained in
Pardoux \& Peng \cite[1990]{PP90}. Peng \cite[1991a]{Peng1991}
established a probabilistic interpretation, through BSDE, of system
of quasi-linear partial differential equations, PDE in short, of
parabolic and elliptic types, under a strong elliptic assumption.
Then Peng \cite[1992]{Peng1992} and Pardoux \& Peng
\cite[1992]{PP92} obtained this interpretation for possibly
degenerate situation. This interpretation which established a 1-1
correspondence between a solution of a PDE and the corresponding
state dependent BSDE is the so-called nonlinear Feynman-Kac formula.
Since then and specially after the study of BSDE in \cite{EPQ} with
application to finance, BSDE theory
has been extensively studied. We refer to a survey paper of \cite%
{PengICM2010} for more details of the theoretical studies and applications
to, e.g., stochastic controls, optimizations, games and finance.

Under some suitable condition imposed to the generator $g$, this BSDE was
used to define a nonlinear expectation $\mathcal{E}^{g}[\xi ]:=Y_{0}$,
called $g$-expectation (see \cite[Peng1997]{Peng1997}). This $g$-expectation
is time consistent, namely the conditional expectation $\mathcal{E}^{g}[\xi |%
\mathcal{F}_{t}]$ is well-defined, under which the solution process $Y_{t}$
is a nonlinear martingale $Y_{t}=\mathcal{E}^{g}[\xi |\mathcal{F}_{t}]$. In
fact it was proved that there exists a 1-1 correspondence between a set of
`dominated' and time-consistent nonlinear expectations and that of BSDEs
(see \cite{CHMP}).

There are at least two reasons to study BSDEs and/or the corresponding
time-consistent nonlinear expectations outside of a classical probability
space framework. The first one is that the classical BSDE can provide a
probabilistic interpretation of a PDE only for quasilinear but not fully
nonlinear cases. The second one is that the well-known HJB-equation method
of volatility model uncertainty (see \cite{Avel1995}) is difficult to treat
a general path-dependent situation to measure financial risks. This problem
is also closely related to defining an important type of time-consistent
coherent risk measures (or sunlinear expectation) for which the
probabilities involved in the robust representation theorem are singular
from each others.

The notion of time-consistent fully nonlinear expectations has been
established in \cite[Peng2004]{Peng2004} and \cite[Peng2005]{Peng2005}. The
main approach of \cite{Peng2004} is to establish a new type of
`path-dependent value function' of a stochastic optimal control system, in
which the time consistency was able to be obtained through the corresponding
path-dependent dynamic programming principle (DPP).

In \cite{Peng2005} a canonical space of nonlinear Markovian paths was
defined. A nonlinear expectation, together with it's time-consistent
conditional expectations, was defined firstly on a subspace of
finite-dimensional cyclic functions of canonical paths, through a sublinear
Markovian semigroup, step by step and backwardly in time. This expectation
and the corresponding conditional expectations were then extended to the
completion of the above subspace of cyclic functions by using the Banach
norm induced by this sublinear expectation. Existence and uniqueness of a
type of multi-dimensional fully nonlinear BSDE was obtained in this paper.

As a typical and important situation of the above nonlinear Markovian
processes, Peng (2006) introduced a framework of time consistent nonlinear
expectation called $G$-expectation $\hat{\mathbb{E}}[\cdot ]$ (see lecture
notes of \cite{P10} and the references therein) in which a new type of
Brownian motion called $G$-Brownian motion was constructed and the
corresponding stochastic calculus of It\^{o}'s type was established.

Using this stochastic calculus the existence and uniqueness of SDEs driven
by $G$-Brownian motion can be obtained, in a way parallel to that of
classical theory of SDE, through which a large set of fully nonlinear
Markovian and non Markovian processes can be easily generated. But the
corresponding BSDE driven by a $G$-Brownian motion $(B_{t})_{t\geq 0}$
becomes a challenging and fascinating problem.

Just like in the classical situation, the first and most simplest BSDE in
this $G$-framework is the corresponding $G$-martingale representation
theorem. For a dense family of $G$-martingales, Peng \cite{P07b} obtained
the following result: a $G$-martingale $M$ is of the form
\begin{eqnarray*}
M_{t} &=&M_{0}+\bar{M}_{t}+K_{t},\ \  \\
\bar{M}_{t} &:&=\int_{0}^{t}z_{s}B_{s},\ \ \ \ \ K_{t}:=\int_{0}^{t}\eta
_{s}\left\langle B\right\rangle _{s}-\int_{0}^{t}2G(\eta _{s})ds.
\end{eqnarray*}%
Here $M$ is decomposed into two types of very different $G$-martingales: the
first one $\bar{M}$ is called symmetric $G$-martingale for which $-\bar{M}$
is also a $G$-martingale. The second one $K$ is quite unusual since it is a
decreasing process. How to understand this new type of decreasing $G$%
-martingales has become a main concern in the theory of $G$-framework, which
rised an interesting open problem (see \cite{P07b} and \cite{P10}).

An important step is to decompose an $G$-martingale $M$ into a sum
of a symmetric $G$-martingale $\bar{M}$ and a decreasing
$G$-martingale $K$. This difficult problem was solved after a series
of successive efforts of Soner, Touzi \& Zhang \cite[2011]{STZ} and
Song \cite[2011]{Song11}, \cite[2012]{Song12}. Another important
step is to give a completion of random variables in which
the non increasing $G$-martingales $K$ in the decomposition of the $G$%
-martingale $\mathbb{\hat{E}}_{t}[\xi ]$ can be uniquely represented $%
K_{t}:=\int_{0}^{t}\eta _{s}\left\langle B\right\rangle
_{s}-\int_{0}^{t}2G(\eta _{s})ds$. Thanks to an original new norm introduced
in Song \cite[2012]{Song12} for decreasing $G$-martingales, a representation
theorem of $G$-martingales in a complete subspace of $L_{G}^{\alpha }(\Omega
_{T})$ has been obtained by Peng, Song and Zhang \cite[2012]{PSZ2012}.

In considering the above $G$-martingale representation theorem, a natural
formulation of a BSDE driven by $G$-Brownian motion is to find a triple of
processes $(Y,Z,K)$, where $K$ is a decreasing $G$-martingale, satisfying
\begin{eqnarray}
Y_{t} &=&\xi
+\int_{t}^{T}f(s,Y_{s},Z_{s})ds+\int_{t}^{T}g(s,Y_{s},Z_{s})d\langle
B\rangle _{s}  \label{e1} \\
&&-\int_{t}^{T}Z_{s}dB_{s}-(K_{T}-K_{t}).  \notag
\end{eqnarray}%
The main result of this paper is the existence and uniqueness of a triple $%
(Y,Z,K)$ which solves BSDE (\ref{e1}), see Theorem \ref{the4.1} and \ref%
{the4.4}.

To prove the existence and uniqueness, two new approaches have been
introduced. The first one is applying the partition of unity theorem to
construct a new type of Galerkin approximation, in the place of the
well-Known Picard approximation approach frequently used in classical BSDE
theory. The second one involves Lemma 3.4 for decreasing $G$-martingales,
which helps us to use our $G$-stochastic calculus obtain the uniqueness, as
well as the existence part of the proof. Estimate (\ref{e2song}) originally
obtained in \cite{Song11} also plays an important role.

Now let us compare the results of this paper with the existing results
concerning fully nonlinear BSDEs.

For the case where the generator $f$ in (1.2) is independent of $z$ and $g=0$%
, the above problem can be equivalently formulated as
\begin{equation*}
Y_{t}=\hat{\mathbb{E}}_{t}[\xi +\int_{t}^{T}f(s,Y_{s})ds].
\end{equation*}%
The existence and uniqueness of such fully nonlinear BSDE was obtained in
\cite[Peng2005]{Peng2005} and \cite{P07b}, \cite{P10}. This approach was
used to treat many interesting problem corresponding fully nonlinear PDE
and/or system of fully nonlinear PDEs, in which each component $u^{i}$ of
the solution is associated to its own second order nonlinear elliptic
operator (see \cite{P10}). But a drawback of this formulation is that it is
difficult to treat the case where the generators $f$ and/or $g$ contain the $%
z$-terms (but the $z$-term can be integrated in the nonlinear Markovian
semigroup, see \cite[Peng2005]{Peng2005} and \cite{P10}).

Soner, Touzi and Zhang \cite[2012]{STZ11} have obtained an existence
and uniqueness theorem for a type of fully nonlinear BSDE, called
2BSDE, whose
generator can contain $Z$-term. Their solution is $(Y,Z,K^{\mathbb{P}})_{%
\mathbb{P}\in \mathcal{P}_{H}^{\kappa }}$ which solves, for each probability
$\mathbb{P}\in \mathcal{P}_{H}^{\kappa }$, the following BSDE
\begin{equation*}
Y_{t}=\xi +\int_{t}^{T}F_{s}(Y_{s},Z_{s})ds-\int_{t}^{T}Z_{s}dB_{s}+(K_{T}^{%
\mathbb{P}}-K_{t}^{\mathbb{P}}),\ \ \mathbb{P}\text{-a.s., }
\end{equation*}%
for which the following minimum condition is satisfied
\begin{equation*}
K_{t}^{\mathbb{P}}=\text{ess}\inf_{\mathbb{P}^{\prime }\in \mathcal{P}%
_{H}^{\kappa }(t+,\mathbb{P})}\mathbb{E}_{t}^{\mathbb{P}^{\prime }}[K_{T}^{%
\mathbb{P}}],\ \ \mathbb{P}\text{-a.s.,\ \ }\forall \mathbb{P}\in \mathcal{P}%
_{H}^{\kappa },\ t\in \lbrack 0,T].
\end{equation*}
But in their paper the processes $(K^{\mathbb{P}})_{\mathbb{P}\in \mathcal{P}%
_{H}^{\kappa }}$ are not able to be \textquotedblleft
aggregated\textquotedblright\ into an `universal $K$'. This is a drawback in
the sense that the quantity of calculation for solving this 2BSDE is still
involved an complicated optimization problem with respect to the original
subset of probabilities $\mathcal{P}_{H}^{\kappa }$. In our paper the triple
$(Y,Z,K)$ is universally defined within the $G$-Brownian motion framework.
The method of our paper can be also applied to many other situations.

The paper is organized as follows. In section 2, we present some
preliminaries for stochastic calculus under $G$-framework. Some estimates
for the solution of $G$-BSDE are established in section 3. In section 4 the
existence and uniqueness theory is provided.

\section{Preliminaries}

We review some basic notions and results of $G$-expectation and the related
space of random variables. More details of this section can be found in \cite%
{P07a}, \cite{P07b}, \cite{P08a}, \cite{P08b}, \cite{P10}.

\begin{definition}
\label{def2.1} Let $\Omega$ be a given set and let $\mathcal{H}$ be a vector
lattice of real valued functions defined on $\Omega$, namely $c\in \mathcal{H%
}$ for each constant $c$ and $|X|\in \mathcal{H}$ if $X\in \mathcal{H}$. $%
\mathcal{H}$ is considered as the space of random variables. A sublinear
expectation $\mathbb{\hat{E}}$ on $\mathcal{H}$ is a functional $\mathbb{%
\hat {E}}:\mathcal{H}\rightarrow \mathbb{R}$ satisfying the following
properties: for all $X,Y\in \mathcal{H}$, we have

\begin{description}
\item[(a)] Monotonicity: If $X\geq Y$ then $\mathbb{\hat{E}}[X]\geq \mathbb{%
\hat{E}}[Y]$;

\item[(b)] Constant preservation: $\mathbb{\hat{E}}[c]=c$;

\item[(c)] Sub-additivity: $\mathbb{\hat{E}}[X+Y]\leq \mathbb{\hat{E}}[X]+%
\mathbb{\hat{E}}[Y]$;

\item[(d)] Positive homogeneity: $\mathbb{\hat{E}}[\lambda X]=\lambda
\mathbb{\hat{E}}[X]$ for each $\lambda \geq0$.
\end{description}

$(\Omega,\mathcal{H},\mathbb{\hat{E}})$ is called a sublinear expectation
space.
\end{definition}

\begin{definition}
\label{def2.2} Let $X_{1}$ and $X_{2}$ be two $n$-dimensional random vectors
defined respectively in sublinear expectation spaces $(\Omega_{1},\mathcal{H}%
_{1},\mathbb{\hat{E}}_{1})$ and $(\Omega_{2},\mathcal{H}_{2},\mathbb{\hat{E}}%
_{2})$. They are called identically distributed, denoted by $X_{1}\overset{d}%
{=}X_{2}$, if $\mathbb{\hat{E}}_{1}[\varphi(X_{1})]=\mathbb{\hat{E}}%
_{2}[\varphi(X_{2})]$, for all$\ \varphi \in C_{l.Lip}(\mathbb{R}^{n})$,
where $C_{l.Lip}(\mathbb{R}^{n})$ is the space of real continuous functions
defined on $\mathbb{R}^{n}$ such that
\begin{equation*}
|\varphi(x)-\varphi(y)|\leq C(1+|x|^{k}+|y|^{k})|x-y|\ \text{\ for all}\
x,y\in \mathbb{R}^{n},
\end{equation*}
where $k$ and $C$ depend only on $\varphi$.
\end{definition}

\begin{definition}
\label{def2.3} In a sublinear expectation space $(\Omega,\mathcal{H},\mathbb{%
\hat{E}})$, a random vector $Y=(Y_{1},\cdot \cdot \cdot,Y_{n})$, $Y_{i}\in
\mathcal{H}$, is said to be independent of another random vector $%
X=(X_{1},\cdot \cdot \cdot,X_{m})$, $X_{i}\in \mathcal{H}$ under $\mathbb{%
\hat {E}}[\cdot]$, denoted by $Y\bot X$, if for every test function $\varphi
\in C_{l.Lip}(\mathbb{R}^{m}\times \mathbb{R}^{n})$ we have $\mathbb{\hat{E}}%
[\varphi(X,Y)]=\mathbb{\hat{E}}[\mathbb{\hat{E}}[\varphi(x,Y)]_{x=X}]$.
\end{definition}

\begin{definition}
\label{def2.4} ($G$-normal distribution) A $d$-dimensional random vector $%
X=(X_{1},\cdot \cdot \cdot,X_{d})$ in a sublinear expectation space $(\Omega,%
\mathcal{H},\mathbb{\hat{E}})$ is called $G$-normally distributed if for
each $a,b\geq0$ we have
\begin{equation*}
aX+b\bar{X}\overset{d}{=}\sqrt{a^{2}+b^{2}}X,
\end{equation*}
where $\bar{X}$ is an independent copy of $X$, i.e., $\bar{X}\overset{d}{=}X$
and $\bar{X}\bot X$. Here the letter $G$ denotes the function
\begin{equation*}
G(A):=\frac{1}{2}\mathbb{\hat{E}}[\langle AX,X\rangle]:\mathbb{S}%
_{d}\rightarrow \mathbb{R},
\end{equation*}
where $\mathbb{S}_{d}$ denotes the collection of $d\times d$ symmetric
matrices.
\end{definition}

Peng \cite{P08b} showed that $X=(X_{1},\cdot \cdot \cdot,X_{d})$ is $G$%
-normally distributed if and only if for each $\varphi \in C_{l.Lip}(\mathbb{%
R}^{d})$, $u(t,x):=\mathbb{\hat{E}}[\varphi(x+\sqrt{t}X)]$, $(t,x)\in
\lbrack 0,\infty)\times \mathbb{R}^{d}$, is the solution of the following $G$%
-heat equation:%
\begin{equation*}
\partial_{t}u-G(D_{x}^{2}u)=0,\ u(0,x)=\varphi(x).
\end{equation*}

The function $G(\cdot):\mathbb{S}_{d}\rightarrow \mathbb{R}$ is a monotonic,
sublinear mapping on $\mathbb{S}_{d}$ and $G(A)=\frac{1}{2}\mathbb{\hat{E}}%
[(AX,X)]\leq \frac{1}{2}|A|\mathbb{\hat{E}}[|X|^{2}]=:\frac{1}{2}|A|\bar{%
\sigma}^{2}$ implies that there exists a bounded, convex and closed subset $%
\Gamma \subset \mathbb{S}_{d}^{+}$ such that
\begin{equation*}
G(A)=\frac{1}{2}\sup_{\gamma \in \Gamma}\mathrm{tr}[\gamma A],
\end{equation*}
where $\mathbb{S}_{d}^{+}$ denotes the collection of nonnegative elements in
$\mathbb{S}_{d}$.

In this paper we only consider non-degenerate $G$-normal distribution, i.e.,
there exists some $\underline{\sigma}^{2}>0$ such that $G(A)-G(B)\geq
\underline{\sigma}^{2}\mathrm{tr}[A-B]$ for any $A\geq B$.

\begin{definition}
\label{def2.5} i) Let $\Omega_{T}=C_{0}([0,T];\mathbb{R}^{d})$, the
space of real valued continuous functions on $[0,T]$ with
$\omega_{0}=0$, be endowed with the supremum norm and let
$B_{t}(\omega)=\omega_{t}$ be the canonical process. Set
$\mathcal{H}_{T}^{0}:=\{
\varphi(B_{t_{1}},...,B_{t_{n}}):n\geq1,t_{1},...,t_{n}\in
\lbrack0,T],\varphi \in C_{l.Lip}(\mathbb{R}^{d\times n})\}$. $G$%
-expectation is a sublinear expectation defined by
\begin{equation*}
\mathbb{\hat{E}}[X]=\mathbb{\tilde{E}}[\varphi(\sqrt{t_{1}-t_{0}}%
\xi_{1},\cdot \cdot \cdot,\sqrt{t_{m}-t_{m-1}}\xi_{m})],
\end{equation*}
for all $X=\varphi(B_{t_{1}}-B_{t_{0}},B_{t_{2}}-B_{t_{1}},\cdot \cdot
\cdot,B_{t_{m}}-B_{t_{m-1}})$, where $\xi_{1},\cdot \cdot \cdot,\xi_{n}$ are
identically distributed $d$-dimensional $G$-normally distributed random
vectors in a sublinear expectation space $(\tilde{\Omega},\tilde{\mathcal{H}}%
,\mathbb{\tilde{E}})$ such that $\xi_{i+1}$ is independent of $%
(\xi_{1},\cdot \cdot \cdot,\xi_{i})$ for every $i=1,\cdot \cdot \cdot,m-1$. $%
(\Omega _{T},\mathcal{H}_{T}^{0},\mathbb{\hat{E}})$ is called a $G$%
-expectation space.

ii) Let us define the conditional $G$-expectation $\mathbb{\hat{E}}_{t}$ of $%
\xi \in \mathcal{H}_{T}^{0}$ knowing $\mathcal{H}_{t}^{0}$, for $t\in
\lbrack0,T]$. Without loss of generality we can assume that $\xi$ has the
representation $\xi=\varphi(B_{t_{1}}-B_{t_{0}},B_{t_{2}}-B_{t_{1}},\cdot
\cdot \cdot,B_{t_{m}}-B_{t_{m-1}})$ with $t=t_{i}$, for some $1\leq i\leq m$%
, and we put
\begin{equation*}
\mathbb{\hat{E}}_{t_{i}}[\varphi(B_{t_{1}}-B_{t_{0}},B_{t_{2}}-B_{t_{1}},%
\cdot \cdot \cdot,B_{t_{m}}-B_{t_{m-1}})]
\end{equation*}%
\begin{equation*}
=\tilde{\varphi}(B_{t_{1}}-B_{t_{0}},B_{t_{2}}-B_{t_{1}},\cdot \cdot
\cdot,B_{t_{i}}-B_{t_{i-1}}),
\end{equation*}
where
\begin{equation*}
\tilde{\varphi}(x_{1},\cdot \cdot \cdot,x_{i})=\mathbb{\hat{E}}[\varphi
(x_{1},\cdot \cdot \cdot,x_{i},B_{t_{i+1}}-B_{t_{i}},\cdot \cdot
\cdot,B_{t_{m}}-B_{t_{m-1}})].
\end{equation*}
\end{definition}

Define $\Vert \xi \Vert_{p,G}=(\mathbb{\hat{E}}[|\xi|^{p}])^{1/p}$ for $\xi
\in \mathcal{H}_{T}^{0}$ and $p\geq1$. Then \textmd{for all}$\ t\in \lbrack
0,T]$, $\mathbb{\hat{E}}_{t}[\cdot]$ is a continuous mapping on $\mathcal{H}%
_{T}^{0}$ w.r.t. the norm $\Vert \cdot \Vert_{1,G}$. Therefore it can be
extended continuously to the completion $L_{G}^{1}(\Omega_{T})$ of $\mathcal{%
H}_{T}^{0}$ under the norm $\Vert \cdot \Vert_{1,G}$.

Let $L_{ip}(\Omega_{T}):=\{ \varphi(B_{t_{1}},...,B_{t_{n}}):n\geq
1,t_{1},...,t_{n}\in \lbrack0,T],\varphi \in C_{b.Lip}(\mathbb{R}^{d\times
n})\},$ where $C_{b.Lip}(\mathbb{R}^{d\times n})$ denotes the set of bounded
Lipschitz functions on $\mathbb{R}^{d\times n}$. Denis et al. \cite{DHP11}
proved that the completions of $C_{b}(\Omega_{T})$ (the set of bounded
continuous function on $\Omega_{T}$), $\mathcal{H}_{T}^{0}$ and $%
L_{ip}(\Omega_{T})$ under $\Vert \cdot \Vert_{p,G}$ are the same and we
denote them by $L_{G}^{p}(\Omega_{T})$.

\begin{definition}
\label{def2.6} Let $M_{G}^{0}(0,T)$ be the collection of processes in the
following form: for a given partition $\{t_{0},\cdot \cdot \cdot,t_{N}\}=\pi
_{T}$ of $[0,T]$,
\begin{equation*}
\eta_{t}(\omega)=\sum_{j=0}^{N-1}\xi_{j}(\omega)I_{[t_{j},t_{j+1})}(t),
\end{equation*}
where $\xi_{i}\in L_{ip}(\Omega_{t_{i}})$, $i=0,1,2,\cdot \cdot \cdot,N-1$.
For $p\geq1$ and $\eta \in M_{G}^{0}(0,T)$, let $\Vert \eta
\Vert_{H_{G}^{p}}=\{ \mathbb{\hat{E}}[(\int_{0}^{T}|\eta_{s}|^{2}ds)^{p/2}]%
\}^{1/p}$, $\Vert \eta \Vert_{M_{G}^{p}}=\{ \mathbb{\hat{E}}%
[\int_{0}^{T}|\eta_{s}|^{p}ds]\}^{1/p}$ and denote by $H_{G}^{p}(0,T)$, $%
M_{G}^{p}(0,T)$ the completions of $M_{G}^{0}(0,T)$ under the norms $\Vert
\cdot \Vert_{H_{G}^{p}}$, $\Vert \cdot \Vert_{M_{G}^{p}}$ respectively.
\end{definition}

\begin{theorem}
\label{the2.7} (\cite{DHP11,HP09}) There exists a tight subset $\mathcal{P}%
\subset \mathcal{M}_{1}(\Omega_{T})$, the set of probability measures on $%
(\Omega_{T},\mathcal{B}(\Omega_{T}))$, such that
\begin{equation*}
\mathbb{\hat{E}}[\xi]=\sup_{P\in \mathcal{P}}E_{P}[\xi]\ \ \text{for \ all}\
\xi \in \mathcal{H}_{T}^{0}.
\end{equation*}
$\mathcal{P}$ is called a set that represents $\mathbb{\hat{E}}$.
\end{theorem}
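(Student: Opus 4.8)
The statement to be proved is the representation theorem: there exists a tight subset $\mathcal{P}\subset\mathcal{M}_1(\Omega_T)$ such that $\mathbb{\hat{E}}[\xi]=\sup_{P\in\mathcal{P}}E_P[\xi]$ for all $\xi\in\mathcal{H}_T^0$. Although this is attributed to external references \cite{DHP11,HP09}, I sketch how I would establish it from first principles. The plan is to exploit the sublinearity of $\mathbb{\hat{E}}$ together with an abstract Hahn--Banach / Riesz-type representation, and then upgrade the abstract representing functionals to genuine countably additive probability measures by proving a regularity (tightness) property.

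First I would observe that $\mathbb{\hat{E}}$ restricted to the space $C_b(\Omega_T)$ is a sublinear functional that is monotone and constant-preserving. By the Hahn--Banach theorem applied to the sublinear functional $\mathbb{\hat{E}}$, for every $\xi$ there is a linear functional $\ell$ dominated by $\mathbb{\hat{E}}$ with $\ell(\xi)=\mathbb{\hat{E}}[\xi]$, and the collection of all such dominated linear functionals $\ell\leq\mathbb{\hat{E}}$ yields $\mathbb{\hat{E}}[\xi]=\sup_\ell \ell(\xi)$. Each such $\ell$ is positive (by monotonicity of $\mathbb{\hat{E}}$) and satisfies $\ell(1)=1$ (by constant preservation), hence is a finitely additive mean on $C_b(\Omega_T)$. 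The next step is to identify these means with Borel probability measures: on the compact path spaces obtained by restricting to $C_0([0,T];\mathbb{R}^d)$ with uniform norm, I would invoke a Riesz representation argument, which requires showing that each $\ell$ is a Radon measure.

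The key technical step, and the one I expect to be the main obstacle, is establishing \emph{tightness} of the representing family $\mathcal{P}$. This is where the specific structure of $G$-Brownian motion enters, as opposed to the soft functional-analytic part above. The idea is to derive uniform moment estimates on the canonical process under $\mathbb{\hat{E}}$, namely bounds of Kolmogorov--Chentsov type such as $\mathbb{\hat{E}}[|B_t-B_s|^{p}]\leq C_p|t-s|^{p/2}$, which follow from the $G$-normal distribution structure and the bound $G(A)\leq\frac12|A|\bar\sigma^2$. These estimates hold uniformly over all $P\in\mathcal{P}$ because $E_P[\cdot]\leq\mathbb{\hat{E}}[\cdot]$ on the relevant test functionals, and they translate directly into equicontinuity-type modulus-of-continuity controls on the paths. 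By the Arzel\`a--Ascoli characterization of compact sets in $C_0([0,T];\mathbb{R}^d)$, these uniform path-regularity estimates force the family $\mathcal{P}$ to be tight.

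Finally, once tightness is in hand, I would use Prokhorov's theorem to guarantee that the abstractly constructed finitely additive means are in fact countably additive (each $\ell$ extends to a genuine probability measure, since finite additivity plus tightness gives $\sigma$-additivity via the regularity of the functional on a tight collection). The representation $\mathbb{\hat{E}}[\xi]=\sup_{P\in\mathcal{P}}E_P[\xi]$ then holds on $\mathcal{H}_T^0$ by combining the Hahn--Banach supremum representation with the identification of means and measures, and the density of $\mathcal{H}_T^0$ test functionals lets one pass from cylinder functions to the full class. The hardest part remains the tightness estimate, because it is the only step that genuinely uses the probabilistic content of the $G$-expectation rather than purely abstract convexity; the rest is a standard sublinear-functional representation argument.
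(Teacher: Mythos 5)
You should first note that the paper itself offers no proof of this theorem: it is imported from \cite{DHP11} and \cite{HP09}, and the route taken there is essentially \emph{concrete}, as Remark 2.8 records: one exhibits the family $\mathcal{P}_{M}=\{P^{0}\circ X^{-1}:X_{t}=\int_{0}^{t}h_{s}dW_{s},\ \underline{\sigma}\leq |h_{s}|\leq \overline{\sigma}\}$, proves $E_{P_{h}}[\xi]\leq \mathbb{\hat{E}}[\xi]$ for cylinder $\xi$ by comparison with the $G$-heat equation, obtains the supremum from the stochastic-control representation of $G$, and gets tightness from classical Kolmogorov--Chentsov moment bounds uniform in $h$. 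Your abstract Hahn--Banach skeleton is instead close to the abstract representation half of \cite{DHP11}, so the overall strategy is legitimate in outline; however, as written, two steps in your sketch would fail, and they are precisely the measure-theoretic core of the theorem rather than peripheral details.

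First, $\Omega_{T}=C_{0}([0,T];\mathbb{R}^{d})$ with the supremum norm is neither compact nor locally compact, so the Riesz representation you invoke on $C_{b}(\Omega_{T})$ delivers only \emph{finitely additive} means, and Prokhorov's theorem cannot repair this: Prokhorov presupposes a family of countably additive probability measures and yields weak relative compactness; it does not upgrade a finitely additive mean to a $\sigma$-additive one. The tool that actually performs the upgrade is the Daniell--Stone theorem, which requires the Dini-type continuity $\mathbb{\hat{E}}[\xi_{n}]\downarrow 0$ whenever $\xi_{n}\downarrow 0$; this is the genuinely probabilistic step, it can fail for sublinear expectations on noncompact spaces (continuity from above is generally false in the $G$-framework), and in \cite{DHP11} it is verified on the cylinder class by working in \emph{finite dimensions}, where the locally Lipschitz growth of $C_{l.Lip}$ test functions together with uniformly dominated moments controls the tails; one then passes to path space by Kolmogorov extension. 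Second, your tightness argument applies $\mathbb{\hat{E}}$ (or the dominated functionals $\ell$) to modulus-of-continuity functionals such as $\sup_{|t-s|\leq \delta}|B_{t}-B_{s}|$, but these are not elements of $\mathcal{H}_{T}^{0}$, which consists of functions of finitely many time points, so the domination $E_{P}\leq \mathbb{\hat{E}}$ is simply not available for them at that stage of the construction. The correct order is the reverse of yours: the cylinder estimate $\mathbb{\hat{E}}[|B_{t}-B_{s}|^{p}]\leq C_{p}|t-s|^{p/2}$ (which \emph{is} a legitimate computation, since $B_{t}-B_{s}$ is $G$-normal) transfers to each representing measure; the per-measure Kolmogorov--Chentsov criterion then produces laws supported on $C_{0}([0,T];\mathbb{R}^{d})$, and only afterwards do the uniform constants yield tightness of the whole family via Arzel\`a--Ascoli. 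So your identification of tightness as ``the main obstacle'' is half right, but the step you dismiss as ``soft functional analysis'' --- converting dominated means into countably additive measures --- is where your argument, as stated, breaks.
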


\begin{remark}
\label{rem2.8} Denis et al. \cite{DHP11} gave a concrete set $\mathcal{P}%
_{M} $ that represents $\mathbb{\hat{E}}$. For simplicity, we only introduce
the $1$-dimensional case, i.e., $\Omega_{T}=C_{0}([0,T];\mathbb{R})$ .

Let $(\Omega^{0},\mathcal{F}^{0},P^{0})$ be a probability space and $%
\{W_{t}\}$ be a $1$-dimensional Brownian motion under $P^{0}$. Let $F^{0}=\{
\mathcal{F}_{t}^{0}\}$ be the augmented filtration generated by $W$. Denis
et al. \cite{DHP11} proved that
\begin{equation*}
\mathcal{P}_{M}:=\{P_{h}:P_{h}=P^{0}\circ
X^{-1},X_{t}=\int_{0}^{t}h_{s}dW_{s},h\in L_{F^{0}}^{2}([0,T];[\underline{%
\sigma},\overline{\sigma}])\}
\end{equation*}
is a set that represents $\mathbb{\hat{E}}$, where $L_{F^{0}}^{2}([0,T];[%
\underline{\sigma},\overline{\sigma}])$ is the collection of $F^{0}$-adapted
measurable processes with $\underline{\sigma}\leq|h_{s}|\leq \overline{\sigma%
}$. Here
\begin{equation*}
\underline{\sigma}^{2}:=-\mathbb{\hat{E}}[-B_{1}^{2}]\leq \mathbb{\hat{E}}%
[B_{1}^{2}]=:\overline{\sigma}^{2}.
\end{equation*}
For this 1-dimensional case,
\begin{equation*}
G(a)=\frac{1}{2}\mathbb{\hat{E}}[aB_{1}^{2}]=\frac{1}{2}[\overline{\sigma}%
^{2}a^{+}-\underline{\sigma}^{2}a^{-}].
\end{equation*}
\end{remark}

Let $\mathcal{P}$ be a weakly compact set that represents $\mathbb{\hat{E}}$%
. For this $\mathcal{P}$, we define capacity%
\begin{equation*}
c(A):=\sup_{P\in \mathcal{P}}P(A),\ A\in \mathcal{B}(\Omega_{T}).
\end{equation*}
A set $A\subset \Omega_{T}$ is polar if $c(A)=0$. A property holds
\textquotedblleft quasi-surely\textquotedblright \ (q.s. for short) if it
holds outside a polar set. In the following, we do not distinguish two
random variables $X$ and $Y$ if $X=Y$ q.s.. We set%
\begin{equation*}
\mathbb{L}^{p}(\Omega_{t}):=\{X\in \mathcal{B}(\Omega_{t}):\sup_{P\in
\mathcal{P}}E_{P}[|X|^{p}]<\infty \} \ \text{for}\ p\geq1.
\end{equation*}
It is important to note that $L_{G}^{p}(\Omega_{t})\subset \mathbb{L}%
^{p}(\Omega_{t})$. We extend $G$-expectation $\mathbb{\hat{E}}$ to $\mathbb{L%
}^{p}(\Omega_{t})$ and still denote it by $\mathbb{\hat{E}}$, for each $X\in$
$\mathbb{L}^{1}(\Omega_{T})$, we set%
\begin{equation*}
\mathbb{\hat{E}}[X]=\sup_{P\in \mathcal{P}}E_{P}[X].
\end{equation*}
For $p\geq1$, $\mathbb{L}^{p}(\Omega_{t})$ is a Banach space under the norm $%
(\mathbb{\hat{E}}[|\cdot|^{p}])^{1/p}$.

Furthermore, we extend the definition of conditional $G$-expectation. For
each fixed $t\geq0,$ let $(A_{i})_{i=1}^{n}$ be a partition of $\mathcal{B}%
(\Omega_{t})$, and set
\begin{equation*}
\xi=\sum_{i=1}^{n}\eta_{i}I_{A_{i}},
\end{equation*}
where $\eta_{i}\in L_{G}^{1}(\Omega_T)$, $i=1,\cdots,n$. We define the
corresponding conditional $G$-expectation, still denoted by $\mathbb{\hat{E}}%
_{s}[\cdot]$, by setting%
\begin{equation*}
\mathbb{\hat{E}}_{s}[\sum_{i=1}^{n}\eta_{i}I_{A_{i}}]:=\sum_{i=1}^{n}\mathbb{%
\hat{E}}_{s}[\eta_{i}]I_{A_{i}}\ \text{\ for}\ s\geq t.
\end{equation*}
The following lemma shows that the above definition of conditional $G$%
-expectation is meaningful.

\begin{lemma}
\label{lemA.7} For each $\xi,\eta \in L_{G}^{1}(\Omega_T)$ and $A\in
\mathcal{B}(\Omega_{t})$, if $\xi I_{A}\geq\eta I_{A}$ q.s., then $\mathbb{%
\hat {E}}_{t}[\xi]I_{A}\geq\mathbb{\hat{E}}_{t}[\eta]I_{A}$ q.s..
\end{lemma}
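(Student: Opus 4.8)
The plan is to reduce the statement to a single locality property for nonnegative random variables and then establish that property by an approximation argument based on the regularity of the capacity $c$.

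First I would remove the inequality. Set $X:=\xi \vee \eta \in L_{G}^{1}(\Omega _{T})$. Since $\xi I_{A}\geq \eta I_{A}$ q.s. we have $X=\xi $ q.s. on $A$, hence $(X-\xi )I_{A}=0$ q.s., while $X\geq \eta $ everywhere gives $\mathbb{\hat{E}}_{t}[X]\geq \mathbb{\hat{E}}_{t}[\eta ]$ q.s. by monotonicity. Thus it is enough to show $\mathbb{\hat{E}}_{t}[X]I_{A}=\mathbb{\hat{E}}_{t}[\xi ]I_{A}$ q.s. Using sub-additivity and monotonicity of $\mathbb{\hat{E}}_{t}$ one gets $|\mathbb{\hat{E}}_{t}[X]-\mathbb{\hat{E}}_{t}[\xi ]|\leq \mathbb{\hat{E}}_{t}[|X-\xi |]$ q.s., so with $\theta :=|X-\xi |\geq 0$ (which satisfies $\theta I_{A}=0$ q.s.) the whole lemma follows once we prove the locality claim: if $\theta \in L_{G}^{1}(\Omega _{T})$, $\theta \geq 0$ and $\theta I_{A}=0$ q.s., then $\mathbb{\hat{E}}_{t}[\theta ]I_{A}=0$ q.s.

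To prove the claim, I would fix a compact $K\subset A$; it suffices to show $\mathbb{\hat{E}}_{t}[\theta ]I_{K}=0$ q.s., since by inner regularity of $c$ one can choose compacts $K_{n}\subset A$ with $c(A\setminus K_{n})\to 0$, and then $c(A\setminus \bigcup_{n}K_{n})=0$. Given $\delta >0$, pick a compact $L\subset A^{c}$ with $c(A^{c}\setminus L)<\delta $, and then an open $O\supset K$ with $O\cap L=\emptyset $ (possible since the disjoint compacts $K,L$ have positive distance); this forces $O\setminus A\subset A^{c}\setminus L$, so $c(O\setminus A)<\delta $. By Urysohn's lemma choose $h\in C_{b}(\Omega _{t})$ with $0\leq h\leq 1$, $h=1$ on $K$ and $h=0$ on $O^{c}$, so that $\{h>0\}\subset O$. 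Then $h\theta \in L_{G}^{1}(\Omega _{T})$ and, since $\theta =0$ q.s. on $A$, we get $0\leq h\theta \leq \theta I_{O}=\theta I_{O\setminus A}$ q.s.

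Finally I would combine the ``taking out what is known'' property (valid for the nonnegative $h\in L_{G}^{1}(\Omega _{t})$), the tower property, and the representation $\mathbb{\hat{E}}[\cdot ]=\sup_{P\in \mathcal{P}}E_{P}[\cdot ]$: on $K$ we have $\mathbb{\hat{E}}_{t}[\theta ]I_{K}=h\mathbb{\hat{E}}_{t}[\theta ]I_{K}=\mathbb{\hat{E}}_{t}[h\theta ]I_{K}\leq \mathbb{\hat{E}}_{t}[h\theta ]$, whence
\begin{equation*}
0\leq \mathbb{\hat{E}}[\mathbb{\hat{E}}_{t}[\theta ]I_{K}]\leq \mathbb{\hat{E}}[\mathbb{\hat{E}}_{t}[h\theta ]]=\mathbb{\hat{E}}[h\theta ]\leq \sup_{P\in \mathcal{P}}E_{P}[\theta I_{O\setminus A}].
\end{equation*}
Letting $\delta \to 0$ forces $c(O\setminus A)\to 0$, and the uniform integrability of $\theta \in L_{G}^{1}(\Omega _{T})$ with respect to $\mathcal{P}$ (established in \cite{DHP11}) yields $\sup_{P}E_{P}[\theta I_{O\setminus A}]\to 0$. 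Hence $\mathbb{\hat{E}}[\mathbb{\hat{E}}_{t}[\theta ]I_{K}]=0$, and since $\mathbb{\hat{E}}_{t}[\theta ]I_{K}\geq 0$ this gives $\mathbb{\hat{E}}_{t}[\theta ]I_{K}=0$ q.s., completing the claim. The main obstacle is precisely that $I_{A}$ for a general Borel set $A\in \mathcal{B}(\Omega _{t})$ is not in $L_{G}^{1}(\Omega _{t})$, so one cannot directly pull it out of $\mathbb{\hat{E}}_{t}$; the regularity of $c$ together with the uniform integrability of $L_{G}^{1}$ is exactly what controls the unavoidable ``leakage'' of the continuous cut-off $h$ outside $A$.
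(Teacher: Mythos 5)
Your reduction to the locality claim (via $X=\xi\vee\eta$ and $\theta=(\eta-\xi)^{+}$, with $\theta I_{A}=0$ q.s.) is clean and correct, and the uniform-integrability estimate at the end would be fine \emph{if} the set-theoretic inputs were available. The genuine gap is the regularity of $c$ that you invoke twice: the existence of compacts $K_{n}\subset A$ with $c(A\setminus K_{n})\rightarrow 0$, and (crucially) of a compact $L\subset A^{c}$ with $c(A^{c}\setminus L)<\delta$. What Choquet capacitability (which is what \cite{DHP11} provides for the weakly compact family $\mathcal{P}$) gives is only $\sup\{c(K):K\subset A\text{ compact}\}=c(A)$; since $c=\sup_{P\in\mathcal{P}}P$ is \emph{not additive}, $c(K_{n})\rightarrow c(A)$ does not control $c(A\setminus K_{n})$. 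Worse, the two-sided property you need for $A$ and $A^{c}$ together is exactly the statement that $I_{A}$ has a quasi-continuous version, and this is false for general $A\in\mathcal{B}(\Omega_{t})$ in the $G$-framework --- indeed you yourself note that $I_{A}\notin L_{G}^{1}$ is ``the main obstacle,'' but your fix assumes that obstacle away. Concretely, take $A=\{\langle B\rangle_{t}\leq\sigma_{0}^{2}t\}$ with $\underline{\sigma}<\sigma_{0}<\overline{\sigma}$: the laws $P_{\epsilon}$ of $\sqrt{\sigma_{0}^{2}+\epsilon}\,W$ lie in $\mathcal{P}_{M}$, satisfy $P_{\epsilon}(A^{c})=1$, and converge weakly to the law $P_{0}$ of $\sigma_{0}W$, which charges $A$ with mass $1$. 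For any closed $L\subset A^{c}$ the portmanteau theorem gives $\limsup_{\epsilon}P_{\epsilon}(L)\leq P_{0}(L)=0$, hence $c(A^{c}\setminus L)\geq\sup_{\epsilon}(1-P_{\epsilon}(L))=1$. So no compact $L\subset A^{c}$ with small $c(A^{c}\setminus L)$ exists, your open set $O$ with $c(O\setminus A)<\delta$ cannot be produced, and the ``leakage'' term $\sup_{P}E_{P}[\theta I_{O\setminus A}]$ is not small. The first use of regularity is repairable (argue by contradiction: if $c(A\cap\{\hat{\mathbb{E}}_{t}[\theta]>0\})>0$, capacitability yields a compact $K$ inside that set with $c(K)>0$), but the second is fatal to the scheme.

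This is precisely where the paper's proof takes a different, one-sided route that avoids any outer approximation of $A$: it extracts by capacitability a single compact $K\subset A$ with $c(K)>0$ on which the conclusion would fail, approximates $I_{K}$ \emph{from above} by continuous functions $\zeta_{n}\downarrow I_{K}$ (always possible for a compact set, e.g.\ via distance functions), and then uses the downward monotone convergence theorem (Theorem 31 in \cite{DHP11}, which is where weak compactness of $\mathcal{P}$ enters) to pass to the limit in both $\hat{\mathbb{E}}[\zeta_{n}(\xi-\eta)^{-}]=\hat{\mathbb{E}}[\zeta_{n}\hat{\mathbb{E}}_{t}[(\xi-\eta)^{-}]]$, obtaining $\hat{\mathbb{E}}[I_{K}\hat{\mathbb{E}}_{t}[(\xi-\eta)^{-}]]=\hat{\mathbb{E}}[I_{K}(\xi-\eta)^{-}]=0$ and a contradiction. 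In short: decreasing continuous approximation of a compact set is compatible with the sublinear expectation, whereas your Urysohn sandwich requires a two-sided regularity of $c$ that the $G$-capacity simply does not have. If you replace your $O$/$L$ construction by the paper's $\zeta_{n}\downarrow I_{K}$ device (keeping your nice reduction to $\theta\geq0$ with $\theta I_{A}=0$ q.s.), the argument closes.
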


\begin{proof}
Otherwise, we can choose a compact set $K\subset A$ with $c(K)>0$ such that $%
(\mathbb{\hat {E}}_{t}[\xi]-\mathbb{\hat{E}}_{t}[\eta])^->0$ on $K$. Since $%
K $ is compact, we can choose a sequence of nonnegative functions $\{
\zeta_{n}\}_{n=1}^{\infty }\subset C_{b}(\Omega_{t})$ such that $%
\zeta_{n}\downarrow I_{K}$. By Theorem 31 in \cite{DHP11}, we have%
\begin{equation*}
\mathbb{\hat{E}}[\zeta_{n}(\xi-\eta)^-]\downarrow \mathbb{\hat{E}}[I_{K}(\xi
-\eta)^-]
\end{equation*}
and%
\begin{equation*}
\mathbb{\hat{E}}[\zeta_{n}\mathbb{\hat{E}}_{t}[(\xi-\eta)^-]]\downarrow
\mathbb{\hat{E}}[I_{K}\mathbb{\hat{E}}_{t}[(\xi-\eta)^-]].
\end{equation*}
Since
\begin{equation*}
\mathbb{\hat{E}}[\zeta_{n}(\xi-\eta)^-]=\mathbb{\hat{E}}[\zeta_{n}\mathbb{%
\hat{E}}_{t}[(\xi-\eta)^-]],
\end{equation*}
we have
\begin{equation*}
\mathbb{\hat{E}}[I_{K}\mathbb{\hat{E}}_{t}[(\xi-\eta)^-]]=\mathbb{\hat{E}}%
[I_{K}(\xi-\eta)^-]=0.
\end{equation*}
Noting that
\begin{equation*}
(\mathbb{\hat{E}}_{t}[\xi]-\mathbb{\hat{E}}_{t}[\eta])^-\leq \mathbb{\hat{E}}%
_{t}[(\xi-\eta)^-],
\end{equation*}
we get $\mathbb{\hat{E}}_{t}[(\xi-\eta)^-]>0$ on $K$. Also by $c(K)>0$ we
get $\mathbb{\hat{E}}[I_{K}\mathbb{\hat{E}}_{t}[(\xi-\eta)^-]>0$. This is a
contradiction and the proof is complete.
\end{proof}

We set%
\begin{equation*}
\mathbb{L}_{G}^{0,p,t}(\Omega_T):=\{
\xi=\sum_{i=1}^{n}\eta_{i}I_{A_{i}}:A_{i}\in \mathcal{B}(\Omega_{t}),%
\eta_{i}\in L_{G}^{p}(\Omega),n\in \mathbb{N}\}.
\end{equation*}
We have the following properties.

\begin{proposition}
\label{proA.8} For each $\xi,\eta \in \mathbb{L}_{G}^{0,1,t}(\Omega_T)$, we
have

\begin{description}
\item[(i)] Monotonicity: If $\xi \leq \eta$, then $\mathbb{\hat{E}}_{s}[\xi
]\leq \mathbb{\hat{E}}_{s}[\eta]$ for any $s\geq t$;

\item[(ii)] Constant preserving: If $\xi \in \mathbb{L}_{G}^{1,t}(%
\Omega_{t}) $, then $\mathbb{\hat{E}}_{t}[\xi]=\xi$;

\item[(iii)] Sub-additivity: $\mathbb{\hat{E}}_{t}[\xi+\eta]\leq \mathbb{%
\hat{E}}_{t}[\xi]+\mathbb{\hat{E}}_{t}[\eta]$;

\item[(iv)] Positive homogeneity: If $\xi \in \mathbb{L}_{G}^{0,\infty,t}(%
\Omega_{t})$ and $\xi\geq0$, then $\mathbb{\hat{E}}_{t}[\xi \eta ]=\xi
\mathbb{\hat{E}}_{t}[\eta]$;

\item[(v)] Consistency: For $t\leq s\leq r$, we have $\mathbb{\hat{E}}_{s}[%
\mathbb{\hat{E}}_{r}[\xi]]=\mathbb{\hat{E}}_{s}[\xi]$.

\item[(vi)] $\mathbb{\hat {E}}[\mathbb{\hat{E}}_{t}[\xi]]=\mathbb{\hat{E}}%
[\xi]$.
\end{description}
\end{proposition}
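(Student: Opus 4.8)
The plan is to reduce every assertion to the corresponding property of the (classical) conditional $G$-expectation on $L_G^1(\Omega_T)$ by passing to a common partition. First I would observe that any finite family in $\mathbb{L}_G^{0,1,t}(\Omega_T)$ can be written over one common Borel partition of $\Omega_t$: if $\xi=\sum_i\alpha_iI_{C_i}$ and $\eta=\sum_j\beta_jI_{D_j}$, I refine to the partition $(A_k)_{k=1}^m$ of $\Omega_t$ generated by the finitely many sets $C_i,D_j\in\mathcal{B}(\Omega_t)$, so that $\xi=\sum_k\xi_kI_{A_k}$, $\eta=\sum_k\eta_kI_{A_k}$ with $\xi_k,\eta_k\in L_G^1(\Omega_T)$. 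By Lemma~\ref{lemA.7} the value of $\hat{\mathbb{E}}_s[\cdot]$ is independent of the representation, so this reduction is legitimate and $\hat{\mathbb{E}}_s[\xi]=\sum_k\hat{\mathbb{E}}_s[\xi_k]I_{A_k}$ for $s\ge t$ (note $A_k\in\mathcal{B}(\Omega_t)\subset\mathcal{B}(\Omega_s)$).

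With this in hand, (i)--(v) are immediate. For (i), $\xi\le\eta$ forces $\xi_kI_{A_k}\le\eta_kI_{A_k}$ q.s., so Lemma~\ref{lemA.7} (with $s$ in place of $t$) gives $\hat{\mathbb{E}}_s[\xi_k]I_{A_k}\le\hat{\mathbb{E}}_s[\eta_k]I_{A_k}$, and summing yields the claim. Properties (iii) and (iv) follow by applying the sub-additivity, resp. positive homogeneity, of the classical conditional $G$-expectation to each coefficient on the common partition, since the weights $I_{A_k}$ (or the nonnegative bounded $\mathcal{B}(\Omega_t)$-coefficients of $\xi$ in (iv)) are shared. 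Property (ii) uses the classical constant preservation $\hat{\mathbb{E}}_t[\eta_k]=\eta_k$ for $\eta_k\in L_G^1(\Omega_t)$, and (v) follows by writing $\hat{\mathbb{E}}_r[\xi]=\sum_k\hat{\mathbb{E}}_r[\xi_k]I_{A_k}$ and invoking the classical tower property $\hat{\mathbb{E}}_s[\hat{\mathbb{E}}_r[\xi_k]]=\hat{\mathbb{E}}_s[\xi_k]$ term by term.

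The only genuinely delicate assertion is (vi), since $\hat{\mathbb{E}}[\cdot]$ is nonlinear and the $I_{A_k}$ are mere Borel indicators. Writing $\bar\xi_k:=\hat{\mathbb{E}}_t[\xi_k]$, I would first establish the continuous analogue: if $\zeta_1,\dots,\zeta_m\in C_b(\Omega_t)$ are nonnegative with pairwise disjoint supports, then $\hat{\mathbb{E}}_t[\sum_k\zeta_k\xi_k]=\sum_k\zeta_k\bar\xi_k$. The inequality $\le$ is sub-additivity plus positive homogeneity; for $\ge$ I pick $\phi_k\in C_b(\Omega_t)$ with $\phi_k\equiv1$ on $\mathrm{supp}\,\zeta_k$ and pairwise disjoint supports, set $\psi=\sum_k\phi_k\in[0,1]$, and note $(1-\psi)\sum_k\zeta_k\xi_k=0$ while $\phi_k\sum_j\zeta_j\xi_j=\zeta_k\xi_k$. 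Positive homogeneity then gives $(1-\psi)\hat{\mathbb{E}}_t[\sum_k\zeta_k\xi_k]=0$ and $\phi_k\hat{\mathbb{E}}_t[\sum_j\zeta_j\xi_j]=\zeta_k\bar\xi_k$, which sum to the claim. As $\sum_k\zeta_k\xi_k\in L_G^1(\Omega_T)$, the classical identity $\hat{\mathbb{E}}[X]=\hat{\mathbb{E}}[\hat{\mathbb{E}}_t[X]]$ on $L_G^1(\Omega_T)$ yields $\hat{\mathbb{E}}[\sum_k\zeta_k\xi_k]=\hat{\mathbb{E}}[\sum_k\zeta_k\bar\xi_k]$.

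It then remains to pass $\zeta_k\to I_{A_k}$. Using the tightness of $\mathcal{P}$ (Theorem~\ref{the2.7}) and inner regularity of $c$, I choose, for $\varepsilon>0$, disjoint compacts $K_k\subset A_k$ with $c(A_k\setminus K_k)<\varepsilon$ and $\zeta_k\in C_b(\Omega_t)$, $0\le\zeta_k\le1$, with $\zeta_k\equiv1$ on $K_k$, pairwise disjoint supports, each disjoint from the other $K_j$. Then $\sum_k\zeta_k\xi_k$ and $\xi=\sum_kI_{A_k}\xi_k$ agree on $\bigcup_kK_k$, hence differ only on $D:=\Omega_T\setminus\bigcup_kK_k$ with $c(D)<m\varepsilon$, so $\hat{\mathbb{E}}[|\sum_k\zeta_k\xi_k-\xi|]\le2\sum_k\hat{\mathbb{E}}[|\xi_k|I_D]$. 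Invoking the $L_G^1$ absolute-continuity estimate $\hat{\mathbb{E}}[|\xi_k|I_D]\to0$ as $c(D)\to0$ (and likewise for the $\bar\xi_k$), both sides of the identity above converge to $\hat{\mathbb{E}}[\xi]$ and $\hat{\mathbb{E}}[\hat{\mathbb{E}}_t[\xi]]$ respectively, which gives (vi). The main obstacle is precisely this property: the nonlinearity of $\hat{\mathbb{E}}$ blocks a term-by-term argument, and one must combine the disjoint-support positive-homogeneity trick with a regularity/approximation step in the spirit of the proof of Lemma~\ref{lemA.7}.
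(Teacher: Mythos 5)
Your treatment of (i)--(v) and your ``continuous analogue'' for (vi) are sound and in fact mirror the paper's Step 1 (the paper takes $\varphi_m^i\in C_b(\Omega_t)$ with disjoint supports and $\varphi_m^i\downarrow I_{K_i}$, then argues as in Lemma \ref{lemA.7}). But your final approximation step contains a genuine gap: the claim that for an arbitrary Borel set $A_k\in\mathcal{B}(\Omega_t)$ one can choose a compact $K_k\subset A_k$ with $c(A_k\setminus K_k)<\varepsilon$ is not available and is false in general. The capacity $c(A)=\sup_{P\in\mathcal{P}}P(A)$ is not additive, so Choquet capacitability --- which does give $c(A)=\sup\{c(K):K\subset A\ \text{compact}\}$ --- does \emph{not} imply that $c(A\setminus K)$ is small: the measure that nearly attains $c(A)$ on $K$ need not be the measure that makes $A\setminus K$ large, and in the $G$-setting $\mathcal{P}$ typically consists of mutually singular measures, which is exactly the regime where this fails. (A toy illustration: if $\mathcal{P}$ were the set of all Dirac measures on $[0,1]$ and $A=(0,1)$, then $c(A\setminus K)=1$ for every compact $K\subsetneq A$.) Equivalently, $I_{A_k}$ for a general Borel set is not quasi-continuous, so no single compact can work uniformly over $\mathcal{P}$, and your estimate $\mathbb{\hat{E}}[|\sum_k\zeta_k\xi_k-\xi|]\to 0$ cannot be launched.

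The paper's Step 2 shows how to close exactly this hole: instead of a uniform approximation, fix one $P\in\mathcal{P}$ and choose compacts $K_m^i\uparrow$ with $P(A_i\setminus K_m^i)\downarrow 0$ --- inner regularity \emph{does} hold for each single Borel probability measure on a Polish space. One then bounds $E_P[\sum_i I_{A_i}\mathbb{\hat{E}}_t[\eta_i]]$ by passing through the compact case (your continuous analogue, applied on $\bigcup_iK_m^i$, with $\eta_i\geq 0$ ensuring monotonicity in $m$) to get $E_P[\sum_i I_{A_i}\mathbb{\hat{E}}_t[\eta_i]]\leq\mathbb{\hat{E}}[\sum_i I_{A_i}\eta_i]$, and takes $\sup_{P\in\mathcal{P}}$; the reverse inequality is obtained symmetrically. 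Note the two inequalities are proved separately, each with its own $P$-dependent compacts --- this decoupling is precisely what replaces the (unavailable) capacity inner regularity in your argument. With your Step (vi) restructured along these lines (after first reducing to bounded positive $\xi$, as the paper does, rather than relying on the $L_G^1$ absolute-continuity estimate), your proof would go through.
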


\begin{proof}
(i) is direct consequence of Lemma 2.9. (ii)-(v) are obvious from the
definition. We only prove $\mathbb{\hat{E}}[\mathbb{\hat{E}}_{t}[\xi]]=%
\mathbb{\hat{E}}[\xi]$ for $\xi$ which is bounded and positive.

Step 1. For $\xi=\sum_{i=1}^{N}I_{K_{i}}\eta_{i}$, where $K_{i}$, $%
i=1,\ldots,N$, are disjoint compact sets and $\eta_{i}\geq0$, we can choose $%
\varphi_{m}^{i}\in C_{b}(\Omega_{t})$ such that $\varphi_{m}^{i}\downarrow
K_{i}$ and $\varphi_{m}^{i}\varphi_{m}^{j}=0$ for $i\not =j$. By the same
analysis as that in Lemma \ref{lemA.7}, we can get $\mathbb{\hat{E}}%
[\sum_{i=1}^{N}I_{K_{i}}\mathbb{\hat{E}}_{t}[\eta_{i}]]=\mathbb{\hat{E}}%
[\sum_{i=1}^{N}I_{K_{i}}\eta_{i}]$.

Step 2. For $\xi=\sum_{i=1}^{N}I_{A_{i}}\eta_{i}$, where $A_{i}$, $%
i=1,\ldots,N$, are disjoint sets and $\eta_{i}\geq0$. For each fixed $P\in
\mathcal{P}$, we can choose compact sets $K_{m}^{i}$ such that $%
K_{m}^{i}\uparrow$ and $P(A_{i}-K_{m}^{i})\downarrow0$, then
\begin{align*}
E_{P}[\sum_{i=1}^{N}I_{A_{i}}\mathbb{\hat{E}}_{t}[\eta_{i}]] &
=\lim_{m\rightarrow \infty}E_{P}[\sum_{i=1}^{N}I_{K_{m}^{i}}\mathbb{\hat{E}}%
_{t}[\eta_{i}]] \\
& \leq \lim_{m\rightarrow \infty}\mathbb{\hat{E}}[\sum_{i=1}^{N}I_{K_{m}^{i}}%
\mathbb{\hat{E}}_{t}[\eta_{i}]] \\
& =\lim_{m\rightarrow \infty}\mathbb{\hat{E}}[\sum_{i=1}^{N}I_{K_{m}^{i}}%
\eta_{i}] \\
& \leq \mathbb{\hat{E}}[\sum_{i=1}^{N}I_{A_{i}}\eta_{i}].
\end{align*}
It follows that $\mathbb{\hat{E}}[\sum_{i=1}^{N}I_{A_{i}}\mathbb{\hat{E}}%
_{t}[\eta_{i}]]\leq \mathbb{\hat{E}}[\sum_{i=1}^{N}I_{A_{i}}\eta_{i}]$.
Similarly we can prove $\mathbb{\hat{E}}[\sum_{i=1}^{N}I_{A_{i}}\eta_{i}]%
\leq $ $\mathbb{\hat{E}}[\sum_{i=1}^{N}I_{A_{i}}\mathbb{\hat{E}}%
_{t}[\eta_{i}]]$.
\end{proof}

Let $\mathbb{L}_{G}^{p,t}(\Omega_T)$ be the completion of $\mathbb{L}%
_{G}^{0,p,t}(\Omega_T)$ under the norm $(\mathbb{\hat{E}}[|%
\cdot|^{p}])^{1/p} $. Clearly, the conditional $G$-expectation can be
extended continuously to $\mathbb{L}_{G}^{p,t}(\Omega_T)$.

Set%
\begin{equation*}
\mathbb{M}^{p,0}(0,T):=\{
\eta_{t}=\sum_{i=0}^{N-1}\xi_{t_{i}}I_{[t_{i},t_{i+1})}(t):0=t_{0}<%
\cdots<t_{N}=T,\xi_{t_{i}}\in \mathbb{L}^{p}(\Omega_{t_{i}})\}.
\end{equation*}
For $p\geq1$, we denote by $\mathbb{M}^{p}(0,T)$, $\mathbb{H}^{p}(0,T)$, $%
\mathbb{S}^{p}(0,T)$ the completion of $\mathbb{M}^{p,0}(0,T)$ under the
norm $||\eta||_{\mathbb{M}^{p}}:=(\mathbb{\hat{E}}[\int_{0}^{T}|%
\eta_{t}|^{p}dt])^{1/p}$, $||\eta||_{\mathbb{H}^{p}}:=\{ \mathbb{\hat{E}}%
[(\int_{0}^{T}|\eta_{t}|^{2}dt)^{p/2}]\}^{1/p}$, $||\eta||_{\mathbb{D}%
^{p}}:=(\mathbb{\hat{E}}[\sup_{t\in \lbrack0,T]}|\eta_{t}|^{p}])^{1/p}$
respectively. Following Li and Peng \cite{L-P}, for each $\eta \in \mathbb{H}%
^{p}(0,T)$ with $p\geq1$, we can define It\^{o}'s integral $\int
_{0}^{T}\eta_{s}dB_{s}$. Moreover, by Proposition 2.10 in \cite{L-P} and
classical Burkholder-Davis-Gundy Inequality, the following properties hold.

\begin{proposition}
\label{proA.5} For each $\eta,\theta \in \mathbb{H}^{\alpha}(0,T)$ with $%
\alpha \geq1$ and $p>0$, $\xi \in \mathbb{L}^{\infty}(\Omega_{t})$, we have%
\begin{equation*}
\mathbb{\hat{E}}[\int_{0}^{T}\eta_{s}dB_{s}]=0,
\end{equation*}%
\begin{equation*}
\ \underline{\sigma}^{p}c_{p}\mathbb{\hat{E}}[(\int_{0}^{T}|%
\eta_{s}|^{2}ds)^{p/2}]\leq \mathbb{\hat{E}}[\sup_{t\in
\lbrack0,T]}|\int_{0}^{t}\eta_{s}dB_{s}|^{p}]\leq \bar{\sigma}^{p}C_{p}%
\mathbb{\hat{E}}[(\int_{0}^{T}|\eta_{s}|^{2}ds)^{p/2}],
\end{equation*}%
\begin{equation*}
\int_{t}^{T}(\xi \eta_{s}+\theta_{s})dB_{s}=\xi
\int_{t}^{T}\eta_{s}dB_{s}+\int_{t}^{T}\theta_{s}dB_{s},
\end{equation*}
where $0<c_{p}<C_{p}<\infty$ are constants.
\end{proposition}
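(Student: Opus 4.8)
The plan is to push every assertion through the representation $\mathbb{\hat{E}}[\cdot]=\sup_{P\in\mathcal{P}_{M}}E_{P}[\cdot]$ supplied by Theorem \ref{the2.7} and Remark \ref{rem2.8}, reducing each statement to a classical fact applied under every $P=P_{h}\in\mathcal{P}_{M}$ and then taking the supremum over $h$. The fact I would pin down first is that, for $\eta\in\mathbb{H}^{\alpha}(0,T)$, the $G$-It\^{o} integral $\int_{0}^{\cdot}\eta_{s}dB_{s}$ coincides $P_{h}$-a.s. with the classical It\^{o} integral of $\eta$ against the $P_{h}$-semimartingale $B_{t}=\int_{0}^{t}h_{s}dW_{s}$, under which $d\langle B\rangle_{s}=h_{s}^{2}ds$ with $\underline{\sigma}\leq|h_{s}|\leq\bar{\sigma}$. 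For a step process in $M_{G}^{0}(0,T)$ this identification is immediate from the definition of the integral as $\sum_{j}\xi_{j}(B_{t_{j+1}}-B_{t_{j}})$, and the general case follows by passing to the $\mathbb{H}^{\alpha}$-limit, which is exactly the content I would draw from Proposition 2.10 of \cite{L-P}.

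For the zero-mean property, note that under each $P_{h}$ the process $(\int_{0}^{t}\eta_{s}dB_{s})_{t}$ is a genuine martingale started at $0$ (the hypothesis $\alpha\geq1$ is what makes it uniformly integrable rather than merely local), so $E_{P_{h}}[\int_{0}^{T}\eta_{s}dB_{s}]=0$ for every $h$; taking the supremum gives $\mathbb{\hat{E}}[\int_{0}^{T}\eta_{s}dB_{s}]=\sup_{h}0=0$, and the same applied to $-\eta$ confirms the value is symmetrically $0$.

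For the two-sided estimate I would apply the classical Burkholder--Davis--Gundy inequality under each fixed $P_{h}$, where the $P_{h}$-quadratic variation is $\int_{0}^{T}\eta_{s}^{2}d\langle B\rangle_{s}=\int_{0}^{T}\eta_{s}^{2}h_{s}^{2}ds$, and then use $\underline{\sigma}^{2}\leq h_{s}^{2}\leq\bar{\sigma}^{2}$ to obtain
\[
\underline{\sigma}^{p}c_{p}E_{P_{h}}\Big[\Big(\int_{0}^{T}|\eta_{s}|^{2}ds\Big)^{p/2}\Big]
\leq E_{P_{h}}\Big[\sup_{t}\Big|\int_{0}^{t}\eta_{s}dB_{s}\Big|^{p}\Big]
\leq \bar{\sigma}^{p}C_{p}E_{P_{h}}\Big[\Big(\int_{0}^{T}|\eta_{s}|^{2}ds\Big)^{p/2}\Big].
\]
Since the constants $\underline{\sigma}^{p}c_{p}$ and $\bar{\sigma}^{p}C_{p}$ are independent of $h$, applying $\sup_{h}$ to each side carries the supremum through the multiplicative constant and yields precisely the claimed inequalities with $\mathbb{\hat{E}}$ replacing $E_{P_{h}}$. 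Linearity is trivial for step processes, by pulling the bounded $\mathcal{F}_{t}$-measurable factor $\xi$ out of each increment $\xi_{j}(B_{t_{j+1}}-B_{t_{j}})$, and extends to general $\eta,\theta\in\mathbb{H}^{\alpha}(0,T)$ by density, using that $\xi\in\mathbb{L}^{\infty}(\Omega_{t})$ keeps $\xi\eta$ in $\mathbb{H}^{\alpha}(0,T)$.

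The genuine obstacle is the passage from step processes to general $\eta\in\mathbb{H}^{\alpha}$: one must verify that the $G$-integral, defined as an $\mathbb{H}^{\alpha}$-limit of integrals of step processes, agrees $P_{h}$-a.s. with the classical It\^{o} integral for all $P_{h}$ \emph{simultaneously} (that is, quasi-surely), and that the approximation can be arranged so that both the BDG bounds and the zero-mean identity survive the limit uniformly in $h$. This is exactly where I would lean on Proposition 2.10 of \cite{L-P}; once the quasi-sure identification is secured, the remainder is a uniform application of classical martingale inequalities followed by taking the supremum over $\mathcal{P}_{M}$.
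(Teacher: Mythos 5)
Your proposal is correct and matches the paper's own (implicit) argument: the paper states Proposition \ref{proA.5} without a written proof, justifying it only by the remark that it follows from Proposition 2.10 of \cite{L-P} together with the classical Burkholder--Davis--Gundy inequality, and your sup-over-$\mathcal{P}_{M}$ reduction---identifying the $G$-It\^{o} integral $P_{h}$-a.s.\ with the classical integral against $B_{t}=\int_{0}^{t}h_{s}dW_{s}$, applying the classical martingale property and BDG under each $P_{h}$ with $h$-independent constants, and then taking the supremum---is exactly what that citation unpacks to. You also correctly isolate the one genuine technical point (the quasi-sure identification of the limit integral simultaneously under all $P_{h}$) and attribute it to the same Li--Peng result the paper invokes, so there is no gap.
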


\begin{definition}
\label{def2.9} A process $\{M_{t}\}$ with values in $L_{G}^{1}(\Omega_{T})$
is called a $G$-martingale if $\mathbb{\hat{E}}_{s}[M_{t}]=M_{s}$ for any $%
s\leq t$.
\end{definition}

For $\xi \in L_{ip}(\Omega_{T})$, let $\mathcal{E}[\xi]=\mathbb{\hat{E}}%
[\sup_{t\in \lbrack0,T]}\mathbb{\hat{E}}_{t}[\xi]]$, where $\mathbb{\hat{E}}$
is the $G$-expectation. For convenience, we call $\mathcal{E}$ $G$%
-evaluation.

For $p\geq1$ and $\xi \in L_{ip}(\Omega_{T})$, define $\Vert \xi \Vert _{p,%
\mathcal{E}}=\{ \mathcal{E}[|\xi|^{p}]\}^{1/p}$ and denote by $L_{\mathcal{E}%
}^{p}(\Omega_{T})$ the completion of $L_{ip}(\Omega_{T})$ under the norm $%
\Vert \cdot \Vert_{p,\mathcal{E}}$.

Let $S_{G}^{0}(0,T)=\{h(t,B_{t_{1}\wedge t},\cdot \cdot \cdot,B_{t_{n}\wedge
t}):t_{1},\ldots,t_{n}\in \lbrack0,T],h\in C_{b,Lip}(\mathbb{R}^{n+1})\}$.
For $p\geq1$ and $\eta \in S_{G}^{0}(0,T)$, set $\Vert \eta
\Vert_{D_{G}^{p}}=\{ \mathbb{\hat{E}}[\sup_{t\in
\lbrack0,T]}|\eta_{t}|^{p}]\}^{\frac{1}{p}}$. Denote by $S_{G}^{p}(0,T)$ the
completion of $S_{G}^{0}(0,T)$ under the norm $\Vert \cdot \Vert_{S_{G}^{p}}$%
.

The following estimate will be frequently used in this paper.

\begin{theorem}
\label{the2.10} (\cite{Song11}) For any $\alpha \geq1$ and $\delta>0$, we
have $L_{G}^{\alpha+\delta}(\Omega_{T})\subset L_{\mathcal{E}%
}^{\alpha}(\Omega _{T})$. More precisely, for any $1<\gamma<\beta:=(\alpha+%
\delta)/\alpha$, $\gamma \leq2$ and for all $\xi \in L_{ip}(\Omega_{T})$, we
have%
\begin{equation}
\mathbb{\hat{E}}[\sup_{t\in \lbrack0,T]}\mathbb{\hat{E}}_{t}[|\xi|^{\alpha
}]]\leq C\{(\mathbb{\hat{E}}[|\xi|^{\alpha+\delta}])^{\alpha/(\alpha+\delta
)}+(\mathbb{\hat{E}}[|\xi|^{\alpha+\delta}])^{1/\gamma}\},  \label{e2song}
\end{equation}
where $C=\frac{\gamma}{\gamma-1}(1+14\sum_{i=1}^{\infty}i^{-\beta/\gamma})$.
\end{theorem}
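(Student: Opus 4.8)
The plan is to deduce (\ref{e2song}) from a weak-type maximal inequality for a single nonnegative $G$-martingale, combined with a conditional Jensen inequality and H\"older's inequality. First I would reduce the exponent: since $\gamma\ge1$, the map $z\mapsto z^{1/\gamma}$ is increasing and concave on $[0,\infty)$, hence the infimum of a family of affine functions $z\mapsto az+b$ with $a\ge0$. Using monotonicity, positive homogeneity and constant preservation of $\mathbb{\hat{E}}_{t}$ (Proposition \ref{proA.8}), this yields the conditional Jensen inequality $\mathbb{\hat{E}}_{t}[|\xi|^{\alpha}]\le(\mathbb{\hat{E}}_{t}[|\xi|^{\alpha\gamma}])^{1/\gamma}$. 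Writing $N_{t}:=\mathbb{\hat{E}}_{t}[|\xi|^{\alpha\gamma}]$ and $N^{*}:=\sup_{t\in[0,T]}N_{t}$, it therefore suffices to estimate $\mathbb{\hat{E}}[(N^{*})^{1/\gamma}]$.

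The core step is a Kolmogorov-type maximal inequality for the nonnegative $G$-martingale $N$. For every $P\in\mathcal{P}$ one has $E_{P}[\,\cdot\,|\mathcal{F}_{s}]\le\mathbb{\hat{E}}_{s}[\,\cdot\,]$, so $N$ is a nonnegative $P$-supermartingale, and for $\xi\in L_{ip}(\Omega_{T})$ it admits a continuous version. Doob's maximal inequality for nonnegative supermartingales then gives, with the \emph{deterministic} constant $N_{0}=\mathbb{\hat{E}}[|\xi|^{\alpha\gamma}]$, the bound $P(N^{*}\ge\lambda)\le N_{0}/\lambda$ for all $\lambda>0$ and all $P$, hence $c(N^{*}\ge\lambda)\le N_{0}/\lambda$. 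A layer-cake computation with the exponent $p=1/\gamma\in(0,1)$, performed uniformly in $P$ and then taking the supremum, gives
\begin{equation*}
\mathbb{\hat{E}}[(N^{*})^{1/\gamma}]=\sup_{P\in\mathcal{P}}E_{P}[(N^{*})^{1/\gamma}]\le\frac{\gamma}{\gamma-1}\,N_{0}^{1/\gamma}=\frac{\gamma}{\gamma-1}\big(\mathbb{\hat{E}}[|\xi|^{\alpha\gamma}]\big)^{1/\gamma}.
\end{equation*}

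Finally I would apply H\"older's inequality. Since $\gamma<\beta=(\alpha+\delta)/\alpha$, we have $\alpha\gamma<\alpha+\delta$, so $\mathbb{\hat{E}}[|\xi|^{\alpha\gamma}]\le(\mathbb{\hat{E}}[|\xi|^{\alpha+\delta}])^{\alpha\gamma/(\alpha+\delta)}$ and thus $N_{0}^{1/\gamma}\le(\mathbb{\hat{E}}[|\xi|^{\alpha+\delta}])^{\alpha/(\alpha+\delta)}$. Chaining the three steps bounds $\mathbb{\hat{E}}[\sup_{t}\mathbb{\hat{E}}_{t}[|\xi|^{\alpha}]]$ by $\frac{\gamma}{\gamma-1}(\mathbb{\hat{E}}[|\xi|^{\alpha+\delta}])^{\alpha/(\alpha+\delta)}$, which is the first term of (\ref{e2song}) and already yields $L_{G}^{\alpha+\delta}(\Omega_{T})\subset L_{\mathcal{E}}^{\alpha}(\Omega_{T})$.

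I expect the only genuine obstacle to lie in transferring classical martingale theory to the $G$-framework, namely: the conditional representation $\mathbb{\hat{E}}_{s}[\,\cdot\,]=\mathrm{ess}\sup_{P'}E_{P'}[\,\cdot\,|\mathcal{F}_{s}]$ underlying the per-$P$ supermartingale property; the existence of a path-regular version of $N$ together with measurability of $N^{*}$, so that the classical maximal inequality is available under each $P$ simultaneously; and the identity $\mathbb{\hat{E}}[(N^{*})^{1/\gamma}]=\sup_{P}E_{P}[(N^{*})^{1/\gamma}]$ for the extended expectation. Once these $G$-specific points are settled, the remaining estimates are routine. I note that the stated right-hand side of (\ref{e2song}), with its second term $(\mathbb{\hat{E}}[|\xi|^{\alpha+\delta}])^{1/\gamma}$ and the convergent series $\sum_{i=1}^{\infty}i^{-\beta/\gamma}$ in $C$, reflects instead the more elementary route of \cite{Song11}: a level-set decomposition of $\xi$ with a per-level Chebyshev and maximal estimate summed over dyadic levels, the $p$-series converging precisely because $\gamma<\beta$. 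The supermartingale argument sketched above produces the sharper single-term bound.
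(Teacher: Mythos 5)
Your proposal is correct in substance, but it is a genuinely different argument from the one the paper relies on: the paper does not prove Theorem \ref{the2.10} at all, it imports it from \cite{Song11}, and Song's proof is exactly the elementary route you identified at the end — a Chebyshev-plus-maximal estimate applied levelwise to a dyadic decomposition of $\xi$, summed over levels, which is where the second term $(\mathbb{\hat{E}}[|\xi|^{\alpha+\delta}])^{1/\gamma}$, the series $\sum_{i}i^{-\beta/\gamma}$ (convergent precisely because $\gamma<\beta$), and the technical restriction $\gamma\leq 2$ all come from. Your route — conditional Jensen to pass from $\mathbb{\hat{E}}_{t}[|\xi|^{\alpha}]$ to $N_{t}^{1/\gamma}$ with $N_{t}=\mathbb{\hat{E}}_{t}[|\xi|^{\alpha\gamma}]$, the weak-type bound $c(N^{*}\geq\lambda)\leq N_{0}/\lambda$, and the layer-cake computation with $p=1/\gamma\in(0,1)$ giving the constant $\gamma/(\gamma-1)$ — is sound, and it buys a strictly stronger conclusion: a one-term bound with a smaller constant, valid without the hypothesis $\gamma\leq 2$, which trivially implies (\ref{e2song}). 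What Song's argument buys in exchange is self-containedness within the $G$-framework: your core step needs the domination $E_{P}[\,\cdot\,|\mathcal{F}_{s}]\leq\mathbb{\hat{E}}_{s}[\,\cdot\,]$ $P$-a.s.\ simultaneously for all $P$ in a representing set, and you are right to flag this as the genuine obstacle. Note that it is not automatic for an arbitrary weakly compact representing $\mathcal{P}$ (which need not be stable under pasting); you should fix the concrete set $\mathcal{P}_{M}$ of Remark \ref{rem2.8}, where for cylinder $\xi\in L_{ip}(\Omega_{T})$ the property follows by backward induction over the partition, applying It\^{o}'s formula under $P_{h}$ to the $G$-heat solution and using $\tfrac{1}{2}h_{s}^{2}a\leq G(a)$ for $\underline{\sigma}\leq|h_{s}|\leq\overline{\sigma}$; alternatively it is contained in the essential-supremum representation of \cite{STZ}. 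The remaining $G$-specific points you list are unproblematic for $\xi\in L_{ip}(\Omega_{T})$: $|\xi|^{\alpha\gamma}\in L_{ip}(\Omega_{T})$, so $N$ has continuous paths by construction (Definition \ref{def2.5} ii)), $N^{*}$ is Borel, both sides of the Jensen bound are continuous in $t$ so the q.s.\ inequality survives the supremum, and the extension from $L_{ip}$ to $L_{G}^{\alpha+\delta}(\Omega_{T})$ is the usual density argument. So: a correct and sharper proof, at the price of importing representation machinery that Song's intrinsic argument deliberately avoids.
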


\begin{remark}
\label{remn2.11} By $\frac{\alpha}{\alpha+\delta}<\frac{1}{\gamma}<1$, we
have%
\begin{equation*}
\mathbb{\hat{E}}[\sup_{t\in \lbrack0,T]}\mathbb{\hat{E}}_{t}[|\xi|^{\alpha
}]]\leq2C\{(\mathbb{\hat{E}}[|\xi|^{\alpha+\delta}])^{\alpha/(\alpha+\delta
)}+\mathbb{\hat{E}}[|\xi|^{\alpha+\delta}]\}.
\end{equation*}
Set $C_{1}=2\inf \{ \frac{\gamma}{\gamma-1}(1+14\sum_{i=1}^{\infty}i^{-%
\beta/\gamma}):1<\gamma<\beta,\gamma \leq2\}$, then
\begin{equation}
\mathbb{\hat{E}}[\sup_{t\in \lbrack0,T]}\mathbb{\hat{E}}_{t}[|\xi|^{\alpha
}]]\leq C_{1}\{(\mathbb{\hat{E}}[|\xi|^{\alpha+\delta}])^{\alpha
/(\alpha+\delta)}+\mathbb{\hat{E}}[|\xi|^{\alpha+\delta}]\},  \label{e2song1}
\end{equation}
where $C_{1}$ is a constant only depending on $\alpha$ and $\delta$.
\end{remark}

For readers' convenience, we list the main notations of this paper as
follows:

\begin{itemize}
\item The scalar product and norm of the Euclid space $\mathbb{R}^{n}$ are
respectively denoted by $\langle \cdot,\cdot \rangle$ and $|\cdot|$;

\item $L_{ip}(\Omega_{T}):=$\{$\varphi(B_{t_{1}},...,B_{t_{n}}):n\geq1$, $%
t_{1},...,t_{n}\in \lbrack0,T]$, $\varphi \in C_{b.Lip}(\mathbb{R}^{d\times
n})$\};

\item $\Vert \xi \Vert _{p,G}=(\mathbb{\hat{E}}[|\xi |^{p}])^{1/p}$, $\ \ \
\ \Vert \xi \Vert _{p,\mathcal{E}}=(\mathbb{\hat{E}}[\sup_{t\in \lbrack 0,T]}%
\mathbb{\hat{E}}_{t}[|\xi |^{p}]])^{1/p}$;

\item $L_{G}^{p}(\Omega _{T}):=$the completion of $L_{ip}(\Omega _{T})$
under $\Vert \cdot \Vert _{p,G}$;

\item $L_{\mathcal{E}}^{p}(\Omega _{T}):=$the completion of $L_{ip}(\Omega
_{T})$ under $\Vert \cdot \Vert _{p,\mathcal{E}}$;

\item $M_{G}^{0}(0,T):=$\{$\eta_{t}=\sum_{j=0}^{N-1}%
\xi_{j}I_{[t_{j},t_{j+1})}(t):0=t_{0}<\cdots<t_{N}=T$, $\xi_{i}\in
L_{ip}(\Omega_{t_{i}})$\};

\item $\Vert \eta \Vert _{M_{G}^{p}}=\{\mathbb{\hat{E}}[\int_{0}^{T}|\eta
_{s}|^{p}ds]\}^{1/p}$, $\ \ \ \ \ \ \ \Vert \eta \Vert _{H_{G}^{p}}=\{%
\mathbb{\hat{E}}[(\int_{0}^{T}|\eta _{s}|^{2}ds)^{p/2}]\}^{1/p}$;

\item $M_{G}^{p}(0,T):=$the completion of $M_{G}^{0}(0,T)$ under $\Vert
\cdot \Vert _{M_{G}^{p}}$;

\item $H_{G}^{p}(0,T):=$\{the completion of $M_{G}^{0}(0,T)$ under $\Vert
\cdot \Vert _{H_{G}^{p}}$\} for $p\geq 1$;

\item $\mathbb{L}^{p}(\Omega_{T}):=$\{$X\in \mathcal{B}(\Omega_{T}):\sup
_{P\in \mathcal{P}}E_{P}[|X|^{p}]<\infty$\}$\ $for$\ p\geq1$;

\item $\mathbb{M}^{p,0}(0,T):=$\{$\eta_{t}=\sum_{i=0}^{N-1}%
\xi_{t_{i}}I_{[t_{i},t_{i+1})}(t):0=t_{0}<\cdots<t_{N}=T$, $\xi_{t_{i}}\in
\mathbb{L}^{p}(\Omega_{t_{i}})$\};

\item $||\eta ||_{\mathbb{M}^{p}}:=(\mathbb{\hat{E}}[\int_{0}^{T}|\eta
_{t}|^{p}dt])^{1/p}$, $\ \ \ \ \ \ ||\eta ||_{\mathbb{H}^{p}}:=\{\mathbb{%
\hat{E}}[(\int_{0}^{T}|\eta _{t}|^{2}dt)^{p/2}]\}^{1/p}$;

\item $||\eta||_{\mathbb{S}^{p}}:=\{ \mathbb{\hat{E}}[\sup_{t\in \lbrack
0,T]}|\eta_{t}|^{p}]\}^{\frac{1}{p}}$;

\item $\mathbb{M}^{p}(0,T):=$the completion of $\mathbb{M}^{p,0}(0,T)$ under
$||\cdot ||_{\mathbb{M}^{p}}$;

\item $\mathbb{H}^{p}(0,T):=$the completion of $\mathbb{M}^{p,0}(0,T)$ under
$||\cdot ||_{\mathbb{H}^{p}}$;

\item $\mathbb{S}^{p}(0,T):=$the completion of $\mathbb{M}^{p,0}(0,T)$ under
$||\cdot ||_{\mathbb{S}^{p}}$;

\item $S_{G}^{0}(0,T)=$\{$h(t,B_{t_{1}\wedge t},\cdot \cdot
\cdot,B_{t_{n}\wedge t}):t_{1},\ldots,t_{n}\in \lbrack0,T]$, $h\in C_{b,Lip}(%
\mathbb{R}^{n+1})$\};

\item $\Vert \eta \Vert_{S_{G}^{p}}=\{ \mathbb{\hat{E}}[\sup_{t\in \lbrack
0,T]}|\eta_{t}|^{p}]\}^{\frac{1}{p}}$;

\item $S_{G}^{p}(0,T):=$the completion of $S_{G}^{0}(0,T)$ under $\Vert
\cdot \Vert _{S_{G}^{p}}$;

\item $\mathfrak{S}_{G}^{\alpha }(0,T):=$ the collection of processes $%
(Y,Z,K)$ such that $Y\in S_{G}^{\alpha }(0,T)$, $Z\in H_{G}^{\alpha }(0,T)$,
$K$ is a decreasing $G$-martingale with $K_{0}=0$ and $K_{T}\in
L_{G}^{\alpha }(\Omega _{T})$.
\end{itemize}

\section{A priori estimates}

For simplicity, we consider the $G$-expectation space $%
(\Omega_{T},L_{G}^{1}(\Omega_{T}),\mathbb{\hat{E}})$ with $%
\Omega_{T}=C_{0}([0,T],\mathbb{R})$ and $\overline{\sigma}^{2}=\mathbb{\hat{E%
}}[B_{1}^{2}]\geq-\mathbb{\hat{E}}[-B_{1}^{2}]=\underline{\sigma}^{2}>0$.
But our results and methods still hold for the case $d>1$.

We consider the following type of $G$-BSDEs for simplicity, and similar
estimates hold for equation (\ref{e1}).
\begin{equation}
Y_{t}=\xi+\int_{t}^{T}f(s,Y_{s},Z_{s})ds-%
\int_{t}^{T}Z_{s}dB_{s}-(K_{T}-K_{t}),  \label{e3}
\end{equation}
where
\begin{equation*}
f(t,\omega,y,z):[0,T]\times \Omega_{T}\times \mathbb{R}^{2}\rightarrow
\mathbb{R}
\end{equation*}
satisfies the following properties: There exists some $\beta>1$ such that

\begin{description}
\item[(H1)] for any $y,z$, $f(\cdot,\cdot,y,z)\in M_{G}^{\beta}(0,T)$;

\item[(H2)] $|f(t,\omega,y,z)-f(t,\omega,y^{\prime},z^{\prime})|\leq
L(|y-y^{\prime}|+|z-z^{\prime}|)$ for some $L>0$.
\end{description}

For simplicity, we denote by $\mathfrak{S}_{G}^{\alpha }(0,T)$ the
collection of processes $(Y,Z,K)$ such that $Y\in S_{G}^{\alpha }(0,T)$, $%
Z\in H_{G}^{\alpha }(0,T)$, $K$ is a decreasing $G$-martingale with $K_{0}=0$
and $K_{T}\in L_{G}^{\alpha }(\Omega _{T})$.

\begin{definition}
\label{def3.1} Let $\xi \in L_{G}^{\beta}(\Omega_{T})$ with $\beta>1$ and $f$
satisfy (H1) and (H2). A triplet of processes $(Y,Z,K)$ is called a solution
of equation (\ref{e3}) if for some $1<\alpha\leq\beta$ the following
properties hold:

\begin{description}
\item[(a)] $(Y,Z,K)\in \mathfrak{S}_{G}^{\alpha }(0,T)$;

\item[(b)] $Y_{t}=\xi+\int_{t}^{T}f(s,Y_{s},Z_{s})ds-%
\int_{t}^{T}Z_{s}dB_{s}-(K_{T}-K_{t})$.
\end{description}
\end{definition}

In order to get a priori estimates for the solution of equation (\ref{e3}),
we need the following lemmas.

\begin{lemma}
\label{lem3.1} Let $X\in S^\alpha_G(0,T)$ for some $\alpha>1$. Set
\begin{equation*}
X_{t}^{n}=\sum_{i=0}^{n-1}X_{t_{i}^{n}}I_{[t_{i}^{n},t_{i+1}^{n})}(t),
\end{equation*}
where $t_{i}^{n}=\frac{iT}{n}$, $i=0,\cdot \cdot \cdot,n$. Then%
\begin{equation}
\mathbb{\hat{E}}[\sup_{t\in
\lbrack0,T]}|X_{t}^{n}-X_{t}|^{\alpha}]\rightarrow0\text{ \ as }n\rightarrow
\infty.  \label{e32}
\end{equation}
\end{lemma}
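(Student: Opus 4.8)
The plan is to prove the statement first for the dense generating class $S_G^0(0,T)$ and then pass to a general $X\in S_G^\alpha(0,T)$ by a density argument; the fact that makes the density step work is that the left-point sampling operator $\eta\mapsto\eta^n$ is a contraction for the norm $\|\cdot\|_{S_G^\alpha}$. First I would record this \emph{contraction property}: for any process $\eta$ we have $\sup_{t\in[0,T]}|\eta_t^n|=\max_{0\le i\le n-1}|\eta_{t_i^n}|\le\sup_{t\in[0,T]}|\eta_t|$, and since $\eta\mapsto\eta^n$ is linear, applying this to a difference gives $\|\eta^n-\theta^n\|_{S_G^\alpha}\le\|\eta-\theta\|_{S_G^\alpha}$ for all $\eta,\theta\in S_G^\alpha(0,T)$.

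Next I would treat the case $X\in S_G^0(0,T)$, say $X_t=h(t,B_{t_1\wedge t},\dots,B_{t_n\wedge t})$ with $h\in C_{b,Lip}$. For every $\omega\in\Omega_T$ the map $t\mapsto X_t(\omega)$ is continuous (composition of the continuous $h$ with the continuous maps $t\mapsto B_{t_j\wedge t}$), hence uniformly continuous on $[0,T]$. With the natural convention on the terminal subinterval, for $t\in[t_i^n,t_{i+1}^n)$ one has $X_t^n=X_{t_i^n}$ with $|t-t_i^n|<T/n$, so
\begin{equation*}
\sup_{t\in[0,T]}|X_t^n-X_t|\le\omega_X(T/n):=\sup_{|t-s|\le T/n}|X_t-X_s|.
\end{equation*}
The modulus $\omega_X(T/n)$ decreases to $0$ pointwise as $n\to\infty$, is bounded by $2\|h\|_\infty$, and depends continuously on $\omega$ in the uniform topology, so $\omega_X(T/n)^\alpha\in C_b(\Omega_T)$ and decreases to $0$. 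Applying the dominated (downward) convergence theorem for $\mathbb{\hat{E}}$ on $C_b(\Omega_T)$ (Theorem 31 of \cite{DHP11}, already invoked in the proof of Lemma \ref{lemA.7}) yields $\mathbb{\hat{E}}[\sup_{t}|X_t^n-X_t|^\alpha]\le\mathbb{\hat{E}}[\omega_X(T/n)^\alpha]\to0$.

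Finally the density step: given $X\in S_G^\alpha(0,T)$ and $\varepsilon>0$, choose $\tilde X\in S_G^0(0,T)$ with $\|X-\tilde X\|_{S_G^\alpha}<\varepsilon$, and let $\tilde X^n$ be its sampling along the same partition. The triangle inequality together with the contraction property gives
\begin{equation*}
\|X^n-X\|_{S_G^\alpha}\le\|X^n-\tilde X^n\|_{S_G^\alpha}+\|\tilde X^n-\tilde X\|_{S_G^\alpha}+\|\tilde X-X\|_{S_G^\alpha}\le 2\varepsilon+\|\tilde X^n-\tilde X\|_{S_G^\alpha}.
\end{equation*}
By the $S_G^0$ case the last term tends to $0$, so $\limsup_n\|X^n-X\|_{S_G^\alpha}\le 2\varepsilon$, and letting $\varepsilon\downarrow0$ gives (\ref{e32}). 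I expect the only delicate point to be the $S_G^0$ case: one must check carefully that the functionals $\omega_X(T/n)^\alpha$ are genuinely bounded and quasi-continuous (indeed in $C_b(\Omega_T)$) so that the weak compactness of the representing set $\mathcal{P}$ legitimizes the interchange of limit and $G$-expectation. The contraction estimate and the triangle-inequality bookkeeping are routine once that interchange is secured.
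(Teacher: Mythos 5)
Your proof is correct, but the core step is handled by a genuinely different mechanism than in the paper. Both proofs share the same skeleton: reduce to $X\in S_G^0(0,T)$ by density, using exactly the contraction property of the sampling map to control the error (the paper writes this as $\sup_t|X_t-X_t^n|\leq 2\sup_t|X_t-\eta_t^m|+\sup_t|\eta_t^m-(\eta^m)_t^n|$, which is your triangle-plus-contraction bookkeeping in disguise). Where you diverge is the $S_G^0$ case. The paper first proves $\mathbb{\hat{E}}[\sup_{i}\sup_{t\in[t_i^n,t_{i+1}^n]}|B_t-B_{t_i^n}|^{\alpha}]\to 0$: it observes that this functional of the path is convex, invokes Proposition 11 of \cite{P07a} to replace $\mathbb{\hat{E}}$ by the single classical Wiener measure $P_{\bar{\sigma}}$ (passing through discrete time points and monotone convergence to justify the identity for the continuous-time supremum), and then appeals to the classical modulus of continuity of Brownian motion; the Lipschitz structure of $\eta\in S_G^0$ transfers the estimate from $B$ to $\eta$. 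You instead work with the pathwise modulus $\omega_X(T/n)$ of $X$ itself, check that for $X\in S_G^0$ it is a bounded, uniformly-Lipschitz (hence $C_b(\Omega_T)$) functional decreasing pointwise to zero, and apply the downward convergence theorem (Theorem 31 of \cite{DHP11}, i.e.\ tightness of $\mathcal{P}$ plus Dini) — no reduction to a classical measure and no convexity argument is needed. Your route is softer and arguably more self-contained within the machinery the paper already uses (Theorem 31 is invoked in Lemma \ref{lemA.7}); the paper's route is more structural and, via $P_{\bar{\sigma}}$, could in principle yield quantitative rates from classical Brownian modulus estimates, which your Dini argument cannot. One point you were right to flag: as literally defined, $X_T^n=0$ since all the indicators $I_{[t_i^n,t_{i+1}^n)}$ vanish at $t=T$, so the "natural convention on the terminal subinterval" (closing the last interval, equivalently taking the supremum over $[0,T)$) is genuinely needed; the paper passes over this silently, and your verification that $\omega_X(T/n)^{\alpha}\in C_b(\Omega_T)$ is exactly the hypothesis Theorem 31 requires, so no gap remains.
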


\begin{proof}
For each given $n$, $m\geq1$, it is easy to check that
\begin{equation*}
\sup_{i\leq n-1}\sup_{t_{k}^{m}\in \lbrack
t_{i}^{n},t_{i+1}^{n}]}|B_{t_{k}^{m}}-B_{t_{i}^{n}}|^{\alpha}
\end{equation*}
is a convex function, then by Proposition 11 in Peng \cite{P07a}, we get%
\begin{equation*}
\mathbb{\hat{E}}[\sup_{i\leq n-1}\sup_{t_{k}^{m}\in \lbrack
t_{i}^{n},t_{i+1}^{n}]}|B_{t_{k}^{m}}-B_{t_{i}^{n}}|^{\alpha}]=E_{P_{\bar{%
\sigma}}}[\sup_{i\leq n-1}\sup_{t_{k}^{m}\in \lbrack
t_{i}^{n},t_{i+1}^{n}]}|B_{t_{k}^{m}}-B_{t_{i}^{n}}|^{\alpha}],
\end{equation*}
where $P_{\bar{\sigma}}$ is a Wiener measure on $\Omega_{T}$ such that $%
E_{P_{\bar{\sigma}}}[B_{1}^{2}]=\bar{\sigma}^{2}$. Noting that%
\begin{equation*}
\sup_{i\leq n-1}\sup_{t_{k}^{m}\in \lbrack
t_{i}^{n},t_{i+1}^{n}]}|B_{t_{k}^{m}}-B_{t_{i}^{n}}|^{\alpha}\uparrow
\sup_{i\leq n-1}\sup_{t\in \lbrack
t_{i}^{n},t_{i+1}^{n}]}|B_{t}-B_{t_{i}^{n}}|^{\alpha}\text{ as }m\uparrow
\infty,
\end{equation*}
we have%
\begin{equation*}
\mathbb{\hat{E}}[\sup_{i\leq n-1}\sup_{t\in \lbrack
t_{i}^{n},t_{i+1}^{n}]}|B_{t}-B_{t_{i}^{n}}|^{\alpha}]=E_{P_{\bar{\sigma}%
}}[\sup_{i\leq n-1}\sup_{t\in \lbrack
t_{i}^{n},t_{i+1}^{n}]}|B_{t}-B_{t_{i}^{n}}|^{\alpha }]\rightarrow0.
\end{equation*}
From this we can get $\mathbb{\hat{E}}[\sup_{t\in
\lbrack0,T]}|\eta_{t}-\eta_{t}^{n}|^{\alpha}]\rightarrow0$ for each $\eta
\in S_{G}^{0}(0,T)$. By the definition of $S^\alpha_G(0,T)$, we can choose a
sequence $(\eta ^{m})_{m=1}^{\infty}\subset S_{G}^{0}(0,T)$ such that $%
\mathbb{\hat{E}}[\sup_{t\in
\lbrack0,T]}|X_{t}-\eta_{t}^{m}|^{\alpha}]\rightarrow0$ as $m\rightarrow
\infty$. Note that
\begin{equation*}
\sup_{t\in \lbrack0,T]}|X_{t}-X_{t}^{n}|\leq2\sup_{t\in
\lbrack0,T]}|X_{t}-\eta_{t}^{m}|+\sup_{t\in
\lbrack0,T]}|\eta_{t}^{m}-(\eta^{m})_{t}^{n}|,
\end{equation*}
then we obtain (\ref{e32}) by letting $n\rightarrow \infty$ first and then $%
m\rightarrow \infty$.
\end{proof}

\begin{lemma}
\label{lem3.2} Let $X_{t}$, $X_{t}^{n}$ be as in Lemma \ref{lem3.1} and $%
\alpha^{\ast}=\frac{\alpha}{\alpha-1}$. Assume that $K$ is a decreasing $G$%
-martingale with $K_{0}=0$ and $K_{T}\in L_{G}^{\alpha^{\ast}}(\Omega_{T})$.
Then we have
\begin{equation*}
\mathbb{\hat{E}}[\sup_{t\in \lbrack0,T]}|\int_{0}^{t}X_{s}^{n}dK_{s}-\int
_{0}^{t}X_{s}dK_{s}|]\rightarrow0\text{ \ as }n\rightarrow \infty.
\end{equation*}
\end{lemma}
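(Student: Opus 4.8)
The plan is to exploit the fact that $K$, being a decreasing process with $K_{0}=0$, has paths of bounded variation, so that for q.s.\ every $\omega$ the integrals $\int_{0}^{t}X_{s}\,dK_{s}$ and $\int_{0}^{t}X_{s}^{n}\,dK_{s}$ may be understood as pathwise Lebesgue--Stieltjes integrals against the finite positive measure $d(-K)$ on $[0,T]$, whose total mass is $-K_{T}=|K_{T}|$. The integrand $X$ is continuous in $t$ q.s., being a limit in $S_{G}^{\alpha}(0,T)$ of the continuous processes of $S_{G}^{0}(0,T)$, so the integral is well defined and agrees with the limit of the elementary sums $\int_{0}^{t}X_{s}^{n}\,dK_{s}=\sum_{i}X_{t_{i}^{n}}(K_{t_{i+1}^{n}\wedge t}-K_{t_{i}^{n}\wedge t})$.

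First I would record the elementary pathwise estimate. For every $t\in[0,T]$,
\begin{equation*}
\Big|\int_{0}^{t}X_{s}^{n}\,dK_{s}-\int_{0}^{t}X_{s}\,dK_{s}\Big|\le \int_{0}^{T}|X_{s}^{n}-X_{s}|\,d(-K_{s})\le \Big(\sup_{s\in[0,T]}|X_{s}^{n}-X_{s}|\Big)(-K_{T}),
\end{equation*}
where the last inequality uses that $d(-K)$ is a positive measure of total mass $-K_{T}$. Taking the supremum over $t$ on the left and applying the monotonicity of $\mathbb{\hat{E}}$ gives
\begin{equation*}
\mathbb{\hat{E}}\Big[\sup_{t\in[0,T]}\Big|\int_{0}^{t}X_{s}^{n}\,dK_{s}-\int_{0}^{t}X_{s}\,dK_{s}\Big|\Big]\le \mathbb{\hat{E}}\Big[\Big(\sup_{s\in[0,T]}|X_{s}^{n}-X_{s}|\Big)(-K_{T})\Big].
\end{equation*}

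Next I would apply Hölder's inequality for the sublinear expectation $\mathbb{\hat{E}}$ with the conjugate exponents $\alpha$ and $\alpha^{\ast}=\alpha/(\alpha-1)$:
\begin{equation*}
\mathbb{\hat{E}}\Big[\Big(\sup_{s}|X_{s}^{n}-X_{s}|\Big)(-K_{T})\Big]\le \Big(\mathbb{\hat{E}}\big[\sup_{s}|X_{s}^{n}-X_{s}|^{\alpha}\big]\Big)^{1/\alpha}\Big(\mathbb{\hat{E}}\big[|K_{T}|^{\alpha^{\ast}}\big]\Big)^{1/\alpha^{\ast}}.
\end{equation*}
The second factor is finite and independent of $n$ because $K_{T}\in L_{G}^{\alpha^{\ast}}(\Omega_{T})$ by hypothesis, while the first factor tends to $0$ as $n\rightarrow\infty$ by Lemma \ref{lem3.1}. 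Hence the product tends to $0$, which is exactly the claim.

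The argument is short once the right viewpoint is fixed; the only point demanding care---the main, and rather mild, obstacle---is the very first step: justifying that the decreasing $G$-martingale $K$ has bounded-variation paths q.s., so that the pathwise Stieltjes integral is well defined and the step-process integrals $\int_{0}^{t}X_{s}^{n}\,dK_{s}$ genuinely converge to $\int_{0}^{t}X_{s}\,dK_{s}$. Together with the integrability $\mathbb{\hat{E}}[|K_{T}|^{\alpha^{\ast}}]<\infty$ that makes Hölder's inequality meaningful, the total-variation bound and Lemma \ref{lem3.1} then close the proof.
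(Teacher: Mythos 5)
Your proof is correct and follows essentially the same route as the paper's: the pathwise bound $\sup_{t}|\int_{0}^{t}X_{s}^{n}dK_{s}-\int_{0}^{t}X_{s}dK_{s}|\leq \sup_{s}|X_{s}^{n}-X_{s}|\,(-K_{T})$, then H\"older's inequality with exponents $\alpha$ and $\alpha^{\ast}$, then Lemma \ref{lem3.1} for the first factor. The only difference is that you spell out the justification of the pathwise Lebesgue--Stieltjes interpretation, which the paper takes as given.
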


\begin{proof}
\begin{align*}
& \sup_{t\in
\lbrack0,T]}|\int_{0}^{t}X_{s}^{n}dK_{s}-\int_{0}^{t}X_{s}dK_{s}| \\
& \leq-\int_{0}^{T}|X_{s}^{n}-X_{s}|dK_{s} \\
& \leq \sup_{s\in \lbrack0,T]}|X_{s}^{n}-X_{s}|(-K_{T}).
\end{align*}
So we have
\begin{equation*}
\mathbb{\hat{E}}[\sup_{t\in \lbrack0,T]}|\int_{0}^{t}X_{s}^{n}dK_{s}-\int
_{0}^{t}X_{s}dK_{s}|]\leq \Vert \sup_{s\in
\lbrack0,T]}|X_{s}^{n}-X_{s}|\Vert_{L_{G}^{\alpha}}\Vert
K_{T}\Vert_{L^{\alpha^{\ast}}}\rightarrow0
\end{equation*}
as $n\rightarrow \infty$.
\end{proof}

\begin{lemma}
\label{lem3.3} Let $X\in S^\alpha_G(0,T)$ for some $\alpha>1$ and $%
\alpha^{\ast}=\frac{\alpha}{\alpha-1}$. Assume that $K^{j}$, $j=1,2$, are
two decreasing $G$-martingales with $K_{0}^{j}=0$ and $K_{T}^{j}\in
L_{G}^{\alpha^{\ast}}(\Omega_{T})$. Then the process defined by
\begin{equation*}
\int_{0}^{t}X_{s}^{+}dK_{s}^{1}+\int_{0}^{t}X_{s}^{-}dK_{s}^{2}
\end{equation*}
is also a decreasing $G$-martingale.
\end{lemma}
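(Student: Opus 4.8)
The plan is to establish two things separately: that the process $M_t:=\int_0^t X_s^+\,dK_s^1+\int_0^t X_s^-\,dK_s^2$ is non-increasing, and that it is a $G$-martingale. Monotonicity is immediate: since $X_s^+\ge0$ and $X_s^-\ge0$ while $K^1,K^2$ are decreasing processes of finite variation, each Stieltjes integral is non-increasing in $t$, hence so is their sum. The substantive claim is the identity $\hat{\mathbb{E}}_s[M_t]=M_s$ for $s\le t$. The engine for it is the elementary computation that, for a decreasing $G$-martingale $K$, a bounded nonnegative $\xi\in\mathbb{L}^\infty(\Omega_r)$, and $r\le s\le u$, one has $\hat{\mathbb{E}}_s[\xi(K_u-K_r)]=\xi(K_s-K_r)$. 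This follows from positive homogeneity of the conditional expectation (Proposition \ref{proA.8}(iv), which is exactly why $\xi$ must be nonnegative), the translation property $\hat{\mathbb{E}}_s[\,\cdot+\eta\,]=\hat{\mathbb{E}}_s[\,\cdot\,]+\eta$ for $\eta\in L_G^1(\Omega_s)$ (a consequence of sub-additivity), and the martingale property $\hat{\mathbb{E}}_s[K_u]=K_s$.

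First I would reduce to $X\in S_G^0(0,T)$: since $S_G^\alpha$ is the completion of $S_G^0$ and both the decreasing and the $G$-martingale properties survive the limit used below, it suffices to treat bounded $X$, so that the multipliers appearing are bounded and Proposition \ref{proA.8}(iv) applies verbatim. For such $X$ I would discretize via Lemma \ref{lem3.1}, writing $X_t^n=\sum_i X_{t_i}I_{[t_i,t_{i+1})}(t)$ and considering the step-process integral $M_t^n=\sum_i[X_{t_i}^+(K^1_{t_{i+1}\wedge t}-K^1_{t_i\wedge t})+X_{t_i}^-(K^2_{t_{i+1}\wedge t}-K^2_{t_i\wedge t})]$. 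On $[t_j,t_{j+1}]$ the increment is $W_j=X_{t_j}^+(K^1_t-K^1_{t_j})+X_{t_j}^-(K^2_t-K^2_{t_j})$, and its two summands are supported on the disjoint sets $\{X_{t_j}>0\},\{X_{t_j}<0\}\in\mathcal{B}(\Omega_{t_j})$. By the local (partition) definition of the conditional $G$-expectation together with the identity above, $\hat{\mathbb{E}}_{t_j}[W_j]=0$, and more generally $\hat{\mathbb{E}}_s[M_t^n]=M_s^n$ whenever $s,t$ lie in a single subinterval; the tower property (Proposition \ref{proA.8}(v)) then chains these estimates across subintervals to give the full martingale identity for $M^n$.

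To pass to the limit I would invoke Lemma \ref{lem3.2} with integrands $X^+,X^-$ (note $|x^+-y^+|\le|x-y|$, so taking positive or negative parts preserves membership in $S_G^\alpha$) and integrators $K^1,K^2$ satisfying $K^j_T\in L_G^{\alpha^*}$, yielding $\hat{\mathbb{E}}[\sup_t|M_t^n-M_t|]\to0$. Since $\hat{\mathbb{E}}_s$ is a $\Vert\cdot\Vert_{1,G}$-contraction, the identity $\hat{\mathbb{E}}_s[M^n_t]=M^n_s$ passes to the limit, giving $\hat{\mathbb{E}}_s[M_t]=M_s$. The passage from $S_G^0$ to a general $X\in S_G^\alpha$ is handled the same way: if $X^{(m)}\in S_G^0$ with $\Vert X^{(m)}-X\Vert_{S_G^\alpha}\to0$, then $|(X^{(m)})^+-X^+|\le|X^{(m)}-X|$ and Hölder's inequality give $\hat{\mathbb{E}}[\sup_t|\int_0^t(X^{(m)})^+_s\,dK^1_s-\int_0^t X^+_s\,dK^1_s|]\le\Vert\sup_s|X^{(m)}_s-X_s|\Vert_{L_G^\alpha}\Vert K^1_T\Vert_{L^{\alpha^*}}\to0$, and similarly for the $K^2$ term.

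The step I expect to be delicate is the treatment of the combined increment $W_j$, and it is precisely where the structure of the statement matters. Because the conditional $G$-expectation is sublinear, the sum of two decreasing $G$-martingales is not automatically a $G$-martingale: sub-additivity yields only $\hat{\mathbb{E}}_s[M_t+N_t]\le M_s+N_s$. What rescues the argument is that $X^+$ and $X^-$ have disjoint supports, so the local property splits $\hat{\mathbb{E}}_{t_j}[W_j]$ into two single-integrator pieces, each of which vanishes; and positive homogeneity is available only for the nonnegative multipliers $X^+,X^-$, which is exactly why one integrates $X^+$ against $K^1$ and $X^-$ against $K^2$ rather than a signed $X$ against a single $K$. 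Keeping the multipliers bounded, via the reduction to $S_G^0$ (or alternatively a truncation $\xi\wedge m$ followed by a limit), is the remaining technical care needed to invoke Proposition \ref{proA.8}(iv).
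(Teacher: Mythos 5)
Your proof follows essentially the same route as the paper's: discretize $X$ via Lemma \ref{lem3.1}, use Lemma \ref{lem3.2} to reduce to the step-process case, and on each subinterval pull out the nonnegative multipliers $X_{t_i}^{+},X_{t_i}^{-}$ via positive homogeneity together with the $G$-martingale property of $K^{1},K^{2}$, then pass to the limit. The paper compresses into one line (``by properties of conditional $G$-expectation'') the points you spell out carefully --- the disjoint supports of $X^{+}$ and $X^{-}$, the boundedness needed for Proposition \ref{proA.8}(iv), and the stability $|x^{+}-y^{+}|\leq|x-y|$ --- so your write-up is a correct, more detailed rendering of the same argument.
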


\begin{proof}
Let $X^{n}$ be as in Lemma \ref{lem3.1}. By Lemma \ref{lem3.2}, it suffices
to prove that the process
\begin{equation*}
\int_{0}^{t}(X_{s}^{n})^{+}dK_{s}^{1}+\int_{0}^{t}(X_{s}^{n})^{-}dK_{s}^{2}
\end{equation*}
is a $G$-martingale. By properties of conditional $G$-expectation, we have,
for any $t\in \lbrack t_{i}^{n},t_{i+1}^{n}]$,
\begin{align*}
& \mathbb{\hat{E}}%
_{t}[X_{t_{i}^{n}}^{+}(K_{t_{i+1}^{n}}^{1}-K_{t_{i}^{n}}^{1})+X_{t_{i}^{n}}^{-}(K_{t_{i+1}^{n}}^{2}-K_{t_{i}^{n}}^{2})]
\\
& =X_{t_{i}^{n}}^{+}\mathbb{\hat{E}}%
_{t}[K_{t_{i+1}^{n}}^{1}-K_{t_{i}^{n}}^{1}]+X_{t_{i}^{n}}^{-}\mathbb{\hat{E}}%
_{t}[K_{t_{i+1}^{n}}^{2}-K_{t_{i}^{n}}^{2}] \\
&
=X_{t_{i}^{n}}^{+}(K_{t}^{1}-K_{t_{i}^{n}}^{1})+X_{t_{i}^{n}}^{-}(K_{t}^{2}-K_{t_{i}^{n}}^{2}).
\end{align*}
From this we obtain that $\int_{0}^{t}(X_{s}^{n})^{+}dK_{s}^{1}+%
\int_{0}^{t}(X_{s}^{n})^{-}dK_{s}^{2}$ is a $G$-martingale.
\end{proof}

Now we give a priori estimates for the solution of equation (\ref{e3}). For
this purpose, a weaker version of condition (H2) is enough.

\begin{description}
\item[(H2')] $|f(t,\omega,y,z)-f(t,\omega,y^{\prime},z^{\prime})|\leq
L^{w}(|y-y^{\prime}|+|z-z^{\prime}|+\varepsilon)$ for some $%
L^{w},\varepsilon>0$.
\end{description}

In the following three propositions, $C_{\alpha}$ will always designate a
constant depending on $\alpha,T,L^{w},\underline{\sigma}$, which may vary
from line to line.

\begin{proposition}
\label{pron1} Let $f$ satisfy (H1) and (H2'). Assume%
\begin{equation*}
Y_{t}=\xi+\int_{t}^{T}f(s,Y_{s},Z_{s})ds-%
\int_{t}^{T}Z_{s}dB_{s}-(K_{T}-K_{t}),
\end{equation*}
where $Y\in \mathbb{S}^{\alpha}(0,T)$, $Z\in \mathbb{H}^{\alpha}(0,T)$, $K$
is a decreasing process with $K_{0}=0$ and $K_{T}\in \mathbb{L}%
^{\alpha}(\Omega _{T})$ for some $\alpha>1$. Then there exists a constant $%
C_{\alpha}:=C(\alpha,T,\underline{\sigma},L^{w})>0$ such that%
\begin{equation}
\mathbb{\hat{E}}[(\int_{0}^{T}|Z_{s}|^{2}ds)^{\frac{\alpha}{2}}]\leq
C_{\alpha}\{ \mathbb{\hat{E}}[\sup_{t\in \lbrack0,T]}|Y_{t}|^{\alpha }]+(%
\mathbb{\hat{E}}[\sup_{t\in \lbrack0,T]}|Y_{t}|^{\alpha}])^{\frac{1}{2}}(%
\mathbb{\hat{E}}[(\int_{0}^{T}f_{s}^{0}ds)^{\alpha}])^{\frac{1}{2}}\},
\label{e37}
\end{equation}%
\begin{equation}
\mathbb{\hat{E}}[|K_{T}|^{\alpha}]\leq C_{\alpha}\{ \mathbb{\hat{E}}%
[\sup_{t\in \lbrack0,T]}|Y_{t}|^{\alpha}]+\mathbb{\hat{E}}%
[(\int_{0}^{T}f_{s}^{0}ds)^{\alpha}]\},  \label{e34}
\end{equation}
where $f_{s}^{0}=|f(s,0,0)|+L^{w}\varepsilon$.
\end{proposition}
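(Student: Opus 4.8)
The plan is to apply the $G$-It\^o formula to $Y_t^2$, read off an identity for $\int_0^T Z_s^2\,d\langle B\rangle_s$, and then pass to $\int_0^T Z_s^2\,ds$ using the non-degeneracy bound $\underline{\sigma}^2\,ds\le d\langle B\rangle_s$. Writing the equation in differential form $dY_s=-f(s,Y_s,Z_s)\,ds+Z_s\,dB_s+dK_s$ and observing that only the $dB$-part contributes to the quadratic variation (since $\int f\,ds$ and $K$ have finite variation), It\^o's formula gives
\begin{equation*}
\int_0^T Z_s^2\,d\langle B\rangle_s=Y_T^2-Y_0^2+2\int_0^T Y_s f(s,Y_s,Z_s)\,ds-2\int_0^T Y_sZ_s\,dB_s-2\int_0^T Y_s\,dK_s.
\end{equation*}
Here $Y_T^2-Y_0^2\le\sup_t|Y_t|^2$, and since $K$ is decreasing with $K_0=0$ the last Lebesgue--Stieltjes integral obeys $-2\int_0^T Y_s\,dK_s\le 2\sup_t|Y_t|\,(-K_T)$, with $-K_T=|K_T|$.

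Next I would bound the right-hand side termwise. Using (H2$'$) I split $|f(s,Y_s,Z_s)|\le f_s^0+L^w(|Y_s|+|Z_s|)$, so by Cauchy--Schwarz $\int_0^T|f|\,ds\le\int_0^T f_s^0\,ds+L^wT\sup_t|Y_t|+L^w\sqrt T\,(\int_0^T Z_s^2\,ds)^{1/2}$. Raising the resulting pointwise upper bound for $\underline{\sigma}^2\int_0^T Z_s^2\,ds$ to the power $\alpha/2$ (via $(\sum x_i)^{\alpha/2}\le C_\alpha\sum x_i^{\alpha/2}$) and taking $\mathbb{\hat{E}}$, I treat the stochastic integral through the Burkholder--Davis--Gundy inequality of Proposition \ref{proA.5}, namely $\mathbb{\hat{E}}[|\int_0^T Y_sZ_s\,dB_s|^{\alpha/2}]\le C\,\mathbb{\hat{E}}[(\int_0^T Y_s^2Z_s^2\,ds)^{\alpha/4}]\le C\,\mathbb{\hat{E}}[\sup_t|Y_t|^{\alpha/2}(\int_0^T Z_s^2\,ds)^{\alpha/4}]$, and likewise for the $(\int Z^2)^{1/2}$ contribution coming from $f$. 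Each such mixed term is split by Young's inequality into a small multiple of $\mathbb{\hat{E}}[(\int_0^T Z_s^2\,ds)^{\alpha/2}]$ plus a multiple of $u:=\mathbb{\hat{E}}[\sup_t|Y_t|^\alpha]$. The genuinely $f$-linear term $\sup_t|Y_t|\int_0^T f_s^0\,ds$ I keep as a product and estimate by the H\"older inequality for $\mathbb{\hat{E}}$, producing exactly $u^{1/2}(\mathbb{\hat{E}}[(\int_0^T f_s^0\,ds)^\alpha])^{1/2}$; the same way the $dK$ term yields $u^{1/2}(\mathbb{\hat{E}}[|K_T|^\alpha])^{1/2}$.

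To close the remaining dependence on $K_T$, I read off from the equation itself that $K_T=\xi-Y_0+\int_0^T f(s,Y_s,Z_s)\,ds-\int_0^T Z_s\,dB_s$; the same splitting of $f$ together with BDG gives
\begin{equation*}
\mathbb{\hat{E}}[|K_T|^\alpha]\le C_\alpha\Big\{u+\mathbb{\hat{E}}\big[(\int_0^T f_s^0\,ds)^\alpha\big]+\mathbb{\hat{E}}\big[(\int_0^T Z_s^2\,ds)^{\alpha/2}\big]\Big\}.
\end{equation*}
Substituting this into the $u^{1/2}(\mathbb{\hat{E}}[|K_T|^\alpha])^{1/2}$ term (using $\sqrt{x+y+z}\le\sqrt x+\sqrt y+\sqrt z$ and one more Young step on $u^{1/2}(\mathbb{\hat{E}}[(\int Z^2)^{\alpha/2}])^{1/2}$) leaves only a further small multiple of $\mathbb{\hat{E}}[(\int_0^T Z_s^2\,ds)^{\alpha/2}]$, alongside $u$ and $u^{1/2}(\mathbb{\hat{E}}[(\int f^0)^\alpha])^{1/2}$. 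Choosing all Young constants small enough to absorb the $\mathbb{\hat{E}}[(\int Z^2)^{\alpha/2}]$ terms into the left-hand side yields (\ref{e37}); feeding (\ref{e37}) back into the displayed bound for $\mathbb{\hat{E}}[|K_T|^\alpha]$ and using $u^{1/2}F^{1/2}\le\tfrac12(u+F)$ gives (\ref{e34}).

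The main obstacle is precisely this circular coupling: the $Z$-estimate and the $K_T$-estimate each appear on the other's right-hand side, and the fix is to keep the products $\sup|Y|\cdot\int f^0$ and $\sup|Y|\cdot(-K_T)$ intact so that H\"older produces the square-root/product form of (\ref{e37}), then perform the absorptions in the right order with sufficiently small Young parameters. A secondary technical point is justifying the $G$-It\^o formula and the integrability of $\int_0^T Y_sZ_s\,dB_s$ when $1<\alpha$ may lie below $2$; since $Z\in\mathbb{H}^\alpha$ forces $\int_0^T Z_s^2\,ds<\infty$ q.s. and $\sup_t|Y_t|<\infty$ q.s., the integrand $Y_sZ_s$ lies in the right space, but one must confirm that the BDG estimate of Proposition \ref{proA.5} is applied at the admissible exponent $\alpha/2$.
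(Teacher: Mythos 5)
Your proposal is correct and follows essentially the same route as the paper's proof: It\^o's formula applied to $|Y_t|^2$, the BDG inequality of Proposition \ref{proA.5}, the representation $K_T=\xi-Y_0+\int_0^T f\,ds-\int_0^T Z_s\,dB_s$ read off from the equation, and a Young-inequality absorption to break the circular coupling between the $Z$- and $K_T$-estimates, which the paper compresses into ``by (\ref{e35}) and (\ref{e36}), it is easy to get (\ref{e37}) and (\ref{e34})''. The integrability caveat you flag for applying BDG at exponent $\alpha/2<1$ to $\int Y_sZ_s\,dB_s$ is likewise left implicit in the paper, so your write-up is, if anything, slightly more careful on that point.
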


\begin{proof}
Applying It\^{o}'s formula to $|Y_{t}|^{2}$, we have%
\begin{equation*}
|Y_{0}|^{2}+\int_{0}^{T}|Z_{s}|^{2}d\langle B\rangle_{s}=|\xi|^{2}+\int
_{0}^{T}2Y_{s}f(s)ds-\int_{0}^{T}2Y_{s}Z_{s}dB_{s}-\int_{0}^{T}2Y_{s}dK_{s},
\end{equation*}
where $f(s)=f(s,Y_{s},Z_{s})$. Then%
\begin{equation*}
(\int_{0}^{T}|Z_{s}|^{2}d\langle B\rangle_{s})^{\frac{\alpha}{2}}\leq
C_{\alpha}\{|\xi|^{\alpha}+|\int_{0}^{T}Y_{s}f(s)ds|^{\frac{\alpha}{2}%
}+|\int_{0}^{T}Y_{s}Z_{s}dB_{s}|^{\frac{\alpha}{2}}+|%
\int_{0}^{T}Y_{s}dK_{s}|^{\frac{\alpha}{2}}\}.
\end{equation*}
By Proposition \ref{proA.5} and simple calculation, we can obtain%
\begin{equation}
\mathbb{\hat{E}}[(\int_{0}^{T}|Z_{s}|^{2}ds)^{\frac{\alpha}{2}}]\leq
C_{\alpha}\{ \Vert Y\Vert_{\mathbb{S}^{\alpha}}^{\alpha}+\Vert Y\Vert _{%
\mathbb{S}^{\alpha}}^{\frac{\alpha}{2}}[(\mathbb{\hat{E}}[|K_{T}|^{\alpha
}])^{\frac{1}{2}}+(\mathbb{\hat{E}}[(\int_{0}^{T}f_{s}^{0}ds)^{\alpha }])^{%
\frac{1}{2}}]\}.  \label{e35}
\end{equation}
On the other hand,%
\begin{equation*}
K_{T}=\xi-Y_{0}+\int_{0}^{T}f(s)ds-\int_{0}^{T}Z_{s}dB_{s}.
\end{equation*}
By simple calculation, we get%
\begin{equation}
\mathbb{\hat{E}}[|K_{T}|^{\alpha}]\leq C_{\alpha}\{ \Vert Y\Vert _{\mathbb{S}%
^{\alpha}}^{\alpha}+\mathbb{\hat{E}}[(\int_{0}^{T}|Z_{s}|^{2}ds)^{\alpha/2}]+%
\mathbb{\hat{E}}[(\int_{0}^{T}f_{s}^{0}ds)^{\alpha}]\}.  \label{e36}
\end{equation}
By (\ref{e35}) and (\ref{e36}), it is easy to get (\ref{e37}) and (\ref{e34}%
).
\end{proof}

\begin{remark}
\label{remn2}In this proposition, we do not assume that $(Y,Z,K)$ belong to $%
\mathfrak{S}_{G}^{\alpha }(0,T)$.
\end{remark}

\begin{proposition}
\label{pro3.4} Let $\xi \in L_{G}^{\beta }(\Omega _{T})$ with $\beta >1$ and
$f$ satisfy (H1) and (H2'). Assume that $(Y,Z,K)\in \mathfrak{S}_{G}^{\alpha
}(0,T)$ for some $1<\alpha <\beta $ is a solution of equation (\ref{e3}).
Then

\begin{description}
\item[(i)] There exists a constant $C_{\alpha}:=C(\alpha,T,\underline{\sigma
},L^{w})>0$ such that%
\begin{equation}
|Y_{t}|^{\alpha}\leq C_{\alpha}\mathbb{\hat{E}}_{t}[|\xi|^{\alpha}+\int
_{t}^{T}|f_{s}^{0}|^{\alpha}ds],  \label{e38}
\end{equation}%
\begin{equation}
\mathbb{\hat{E}}[\sup_{t\in \lbrack0,T]}|Y_{t}|^{\alpha}]\leq C_{\alpha }%
\mathbb{\hat{E}}[\sup_{t\in \lbrack0,T]}\mathbb{\hat{E}}_{t}[|\xi|^{\alpha
}+\int_{0}^{T}|f_{s}^{0}|^{\alpha}ds]],  \label{e39}
\end{equation}
where $f_{s}^{0}=|f(s,0,0)|+L^{w}\varepsilon$.

\item[(ii)] For any given $\alpha^{\prime }$ with $\alpha<\alpha^{\prime
}<\beta$, there exists a constant $C_{\alpha, \alpha^{\prime }}$ depending
on $\alpha $, $\alpha^{\prime }$, $T$, $\underline{\sigma}$, $L^{w}$ such
that%
\begin{align}
\mathbb{\hat{E}}[\sup_{t\in \lbrack0,T]}|Y_{t}|^{\alpha}] & \leq C_{\alpha,
\alpha^{\prime }}\{ \mathbb{\hat{E}}[\sup_{t\in \lbrack0,T]}\mathbb{\hat{E}}%
_{t}[|\xi|^{\alpha }]]  \notag \\
& +(\mathbb{\hat{E}}[\sup_{t\in \lbrack0,T]}\mathbb{\hat{E}}%
_{t}[(\int_{0}^{T}f_{s}^{0}ds)^{\alpha^{\prime }}]])^{\frac{\alpha}{%
\alpha^{\prime }}}+\mathbb{\hat{E}}[\sup_{t\in \lbrack0,T]}\mathbb{\hat{E}}%
_{t}[(\int_{0}^{T}f_{s}^{0}ds)^{\alpha^{\prime }}]]\}.  \label{e310}
\end{align}
\end{description}
\end{proposition}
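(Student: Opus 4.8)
My plan is to estimate $Y_t$ through the conditional $G$-expectation by exploiting the $G$-martingale property of the symmetric part and the fact that $-(K_T-K_t)$ is nonpositive. I would start from the equation
\begin{equation*}
Y_t=\xi+\int_t^T f(s,Y_s,Z_s)\,ds-\int_t^T Z_s\,dB_s-(K_T-K_t)
\end{equation*}
and apply It\^o's formula to $|Y_t|^{\alpha}$, or more robustly to a smoothed version $(|Y_t|^2+\varepsilon)^{\alpha/2}$ to avoid the singularity of $x\mapsto|x|^{\alpha}$ at the origin when $1<\alpha<2$. The key structural point is that the $dK_s$ term contributes with a favorable sign: since $K$ is decreasing, a term like $-\int_t^T |Y_s|^{\alpha-2}Y_s\,dK_s$ is controlled, and in fact one wants to discard it rather than keep it, using that $\alpha|Y_s|^{\alpha-1}(-dK_s)\ge 0$. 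This is exactly the place where Lemma \ref{lem3.3} is needed, to guarantee that the stochastic integrals against $dK$ remain $G$-martingales (and hence have zero $G$-expectation after conditioning) rather than introducing uncontrolled terms.

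**Proof of (i).**
For the pointwise bound \eqref{e38}, I would take conditional $G$-expectation $\mathbb{\hat{E}}_t[\cdot]$ after applying It\^o's formula. The quadratic variation term $\int_t^T|Z_s|^2\,d\langle B\rangle_s$ is nonnegative and can be dropped; the generator term is handled by the Lipschitz condition (H2$'$), bounding $|f(s,Y_s,Z_s)|\le f_s^0+L^w(|Y_s|+|Z_s|)$; the $dB_s$ integral vanishes under $\mathbb{\hat{E}}_t$ by Proposition \ref{proA.5}; and the $dK_s$ integral is dealt with via Lemma \ref{lem3.3} together with the decreasing property of $K$, so that its contribution has the right sign. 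The cross term $|Y_s|^{\alpha-1}|Z_s|$ is absorbed by a Young-type inequality, $c|Y_s|^{\alpha-1}|Z_s|\le \tfrac{1}{2}\eta|Y_s|^{\alpha-2}|Z_s|^2+C_\eta|Y_s|^{\alpha}$, where the $|Y_s|^{\alpha-2}|Z_s|^2$ piece is reabsorbed into the quadratic-variation term using non-degeneracy $\underline{\sigma}^2>0$. This yields a conditional Gronwall inequality of the form $|Y_t|^{\alpha}\le \mathbb{\hat{E}}_t[|\xi|^{\alpha}+\int_t^T|f_s^0|^{\alpha}ds]+C\int_t^T \mathbb{\hat{E}}_t[|Y_s|^{\alpha}]\,ds$, from which \eqref{e38} follows by the Gronwall lemma. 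Estimate \eqref{e39} is then obtained from \eqref{e38} by taking $\sup_t$ and then $\mathbb{\hat{E}}$, using the tower-type property $\mathbb{\hat{E}}[\sup_t\mathbb{\hat{E}}_t[\cdot]]$ and monotonicity of the conditional $G$-expectation.

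**Proof of (ii) and the main obstacle.**
For the refined estimate \eqref{e310}, the role of the gap $\alpha<\alpha'<\beta$ is to create integrability room so that the a priori estimate \eqref{e2song1} of Song (Theorem \ref{the2.10}) can be invoked: the quantities $\mathbb{\hat{E}}[\sup_t\mathbb{\hat{E}}_t[\cdots]]$ appearing on the right of \eqref{e39} must be converted into ordinary $G$-expectations, and this conversion is precisely what the reverse-Hölder-type inequality \eqref{e2song1} supplies, at the cost of passing from exponent $\alpha$ to a slightly larger exponent $\alpha'$. I would apply \eqref{e2song1} to the terms involving $(\int_0^T f_s^0\,ds)$ and to $|\xi|^{\alpha}$ in \eqref{e39}, then collect the resulting powers $\alpha/\alpha'$. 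The main technical obstacle I anticipate is the It\^o-formula step when $\alpha<2$: the function $|x|^{\alpha}$ is not $C^2$ at the origin, so one must either mollify as indicated or work with $(|Y|^2+\varepsilon)^{\alpha/2}$ and pass to the limit $\varepsilon\downarrow 0$, while carefully tracking that all the $dK$-integrals retain their $G$-martingale property (Lemma \ref{lem3.3}) throughout the approximation. Keeping the sign conditions intact during this limiting procedure — especially ensuring the favorable sign of the $K$-term survives — is the delicate part of the argument.
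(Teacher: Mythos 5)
Your part (i) is essentially the paper's argument: It\^{o}'s formula applied to the regularization $(|Y_t|^2+\epsilon_\alpha)^{\alpha/2}$, absorption of the $|Z_s|$-term coming from (H2') into the quadratic-variation term via Young's inequality and the nondegeneracy $\underline{\sigma}^2>0$, and Lemma \ref{lem3.3} for the $dK$-integral (the paper multiplies by $e^{\gamma t}$ and takes $\gamma$ large instead of running a conditional Gronwall argument, a cosmetic difference; your Gronwall version can be justified using the consistency property of $\hat{\mathbb{E}}_t$). One bookkeeping point you should repair: the $dK$-contribution $-\int_t^T(\cdots)Y_s\,dK_s$ is \emph{not} globally of favorable sign and cannot be discarded. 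Splitting $Y_s=Y_s^+-Y_s^-$, only the piece involving $Y_s^-$ has the good sign; the piece $-\int_t^T(\cdots)Y_s^+\,dK_s\geq 0$ sits on the wrong side of the inequality and must be retained, and it is annihilated under $\hat{\mathbb{E}}_t$ precisely because $\int_0^t(\cdots)Y_s^+\,dK_s$ is a decreasing $G$-martingale by Lemma \ref{lem3.3}. Your second formulation (zero conditional expectation via Lemma \ref{lem3.3}) is the correct mechanism; your first (``one wants to discard it'') is not.

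Part (ii), however, contains a genuine gap. You propose to deduce \eqref{e310} from \eqref{e39} by applying Song's estimate \eqref{e2song1} to the $\xi$- and $f^0$-terms, ``converting $\hat{\mathbb{E}}[\sup_t\hat{\mathbb{E}}_t[\cdots]]$ into ordinary $G$-expectations''. This cannot work. First, the right-hand side of \eqref{e310} still contains quantities of the form $\hat{\mathbb{E}}[\sup_t\hat{\mathbb{E}}_t[\cdots]]$, so no such conversion of those terms occurs anywhere. Second, and decisively, \eqref{e39} controls $Y$ by $\int_0^T|f_s^0|^{\alpha}ds$ while \eqref{e310} involves $(\int_0^T f_s^0\,ds)^{\alpha'}$, and H\"older/Jensen only gives $(\int_0^T f_s^0\,ds)^{\alpha}\leq T^{\alpha-1}\int_0^T|f_s^0|^{\alpha}ds$, i.e.\ the inequality runs in the wrong direction; so \eqref{e310} is not a consequence of \eqref{e39} by any manipulation of the $f^0$-term. (This distinction is the entire point of (ii): in Steps 3--6 of the proof of Theorem \ref{the4.1} it is only $\int_0^T\hat{f}_s\,ds$, not $\int_0^T|\hat{f}_s|^{\alpha}ds$, that is small.) The paper's route is to return to the It\^{o} estimate and \emph{not} perform the Young splitting \eqref{e313} of the term $\int_t^T\alpha e^{\gamma s}\tilde{Y}_s^{\alpha/2-1/2}f_s^0\,ds$; keeping the product yields $|Y_t|^{\alpha}\leq C_{\alpha}\hat{\mathbb{E}}_t[|\xi|^{\alpha}+\int_t^T|Y_s|^{\alpha-1}f_s^0\,ds]$ (estimate \eqref{e314}), after which conditional H\"older with exponents $\alpha'$ and $\alpha^{\prime\ast}=\alpha'/(\alpha'-1)$ produces the factor $(\hat{\mathbb{E}}_t[\sup_s|Y_s|^{(\alpha-1)\alpha^{\prime\ast}}])^{1/\alpha^{\prime\ast}}$. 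Song's estimate \eqref{e2song1} is then applied to this $Y$-term --- not to $\xi$ or $f^0$ --- and this is exactly where the gap $\alpha<\alpha'$ enters: it guarantees $(\alpha-1)\alpha^{\prime\ast}<\alpha$, providing the integrability room required by \eqref{e2song1}; finally the resulting powers of $\hat{\mathbb{E}}[\sup_t|Y_t|^{\alpha}]$ on the right are absorbed into the left-hand side by Young's inequality. This self-bounding and absorption mechanism is the missing idea in your proposal.
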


\begin{proof}
For any $\gamma,\epsilon>0$, set $\tilde{Y}_{t}=|Y_{t}|^{2}+\epsilon_{%
\alpha} $, where $\epsilon_{\alpha}=\epsilon(1-\alpha/2)^{+}$, applying It%
\^{o}'s formula to $\tilde{Y}_{t}{}^{\alpha/2}e^{\gamma t}$, we have

\begin{align}
& \tilde{Y}_{t}{}^{\alpha/2}e^{\gamma t}+\gamma \int_{t}^{T}e^{\gamma s}%
\tilde{Y}_{s}{}^{\alpha/2}ds+\frac{\alpha}{2}\int_{t}^{T}e^{\gamma s}\tilde {%
Y}_{s}{}^{\alpha/2-1}Z_{s}^{2}d\langle B\rangle_{s}  \notag \\
& =(|\xi|^{2}+\epsilon_{\alpha})^{\alpha/2}e^{\gamma T}+\alpha(1-\frac {%
\alpha}{2})\int_{t}^{T}e^{\gamma s}\tilde{Y}_{s}{}^{%
\alpha/2-2}Y_{s}^{2}Z_{s}^{2}d\langle B\rangle_{s}  \notag \\
& +\int_{t}^{T}\alpha e^{\gamma s}\tilde{Y}_{s}{}^{\alpha/2-1}Y_{s}f(s)ds-%
\int_{t}^{T}\alpha e^{\gamma s}\tilde{Y}_{s}{}^{%
\alpha/2-1}(Y_{s}Z_{s}dB_{s}+Y_{s}dK_{s})  \notag \\
& \leq(|\xi|^{2}+\epsilon_{\alpha})^{\alpha/2}e^{\gamma T}+\alpha (1-\frac{%
\alpha}{2})\int_{t}^{T}e^{\gamma s}\tilde{Y}_{s}{}^{\alpha/2-1}Z_{s}^{2}d%
\langle B\rangle_{s}  \notag \\
& +\int_{t}^{T}\alpha e^{\gamma s}\tilde{Y}_{s}{}^{%
\alpha/2-1/2}|f(s)|ds-(M_{T}-M_{t}),  \label{e311}
\end{align}
where $f(s)=f(s,Y_{s},Z_{s})$ and
\begin{equation*}
M_{t}=\int_{0}^{t}\alpha e^{\gamma s}\tilde{Y}_{s}{}^{%
\alpha/2-1}Y_{s}Z_{s}dB_{s}+\int_{0}^{t}\alpha e^{\gamma s}\tilde{Y}%
_{s}{}^{\alpha/2-1}Y_{s}^{+}dK_{s}.
\end{equation*}
From the assumption of $f$, we have
\begin{align}
& \int_{t}^{T}\alpha e^{\gamma s}\tilde{Y}_{s}{}^{\alpha/2-1/2}|f(s)|ds
\notag \\
& \leq \int_{t}^{T}\alpha e^{\gamma s}\tilde{Y}_{s}{}^{%
\alpha/2-1/2}(f_{s}^{0}+L^{w}|Y_{s}|+L^{w}|Z_{s}|)ds  \notag \\
& \leq(\alpha L^{w}+\frac{\alpha(L^{w})^{2}}{\underline{\sigma}^{2}(\alpha-1)%
})\int_{t}^{T}e^{\gamma s}\tilde{Y}_{s}{}^{\alpha/2}ds+\frac {%
\alpha(\alpha-1)}{4}\int_{t}^{T}e^{\gamma s}\tilde{Y}_{s}{}^{%
\alpha/2-1}Z_{s}^{2}d\langle B\rangle_{s}  \notag \\
& +\int_{t}^{T}\alpha e^{\gamma s}\tilde{Y}_{s}{}^{%
\alpha/2-1/2}|f_{s}^{0}|ds.  \label{e312}
\end{align}
\textbf{(i)} By Young's inequality, we have%
\begin{equation}
\int_{t}^{T}\alpha e^{\gamma s}\tilde{Y}_{s}{}^{\alpha/2-1/2}|f_{s}^{0}|ds%
\leq(\alpha-1)\int_{t}^{T}e^{\gamma s}\tilde{Y}_{s}{}^{\alpha/2}ds+\int
_{t}^{T}e^{\gamma s}|f_{s}^{0}|^{\alpha}ds.  \label{e313}
\end{equation}
By (\ref{e311}), (\ref{e312}) and (\ref{e313}), we have
\begin{align*}
& \tilde{Y}_{t}{}^{\alpha/2}e^{\gamma t}+(\gamma-\tilde{\alpha}%
)\int_{t}^{T}e^{\gamma s}\tilde{Y}_{s}{}^{\alpha/2}ds+\frac{\alpha(\alpha-1)%
}{4}\int_{t}^{T}e^{\gamma s}\tilde{Y}_{s}{}^{\alpha/2-1}Z_{s}^{2}d\langle
B\rangle_{s} \\
& \leq(|\xi|^{2}+\epsilon_{\alpha})^{\alpha/2}e^{\gamma
T}+\int_{t}^{T}e^{\gamma s}|f_{s}^{0}|^{\alpha}ds-(M_{T}-M_{t}),
\end{align*}
where $\tilde{\alpha}=\alpha L^{w}+\alpha+\frac{\alpha(L^{w})^{2}}{%
\underline{\sigma}^{2}(\alpha-1)}-1$. Setting $\gamma=\tilde{\alpha}+1$, we
have
\begin{align*}
& \tilde{Y}_{t}{}^{\alpha/2}e^{\gamma t}+M_{T}-M_{t} \\
& \leq(|\xi|^{2}+\epsilon_{\alpha})^{\alpha/2}e^{\gamma
T}+\int_{t}^{T}e^{\gamma s}|f_{s}^{0}|^{\alpha}ds.
\end{align*}
By Lemma \ref{lem3.3}, $M_{t}$ is a $G$-martingale, so we have
\begin{equation*}
\tilde{Y}_{t}{}^{\alpha/2}e^{\gamma t}\leq \mathbb{\hat{E}}_{t}[(|\xi
|^{2}+\epsilon_{\alpha})^{\alpha/2}e^{\gamma T}+\int_{t}^{T}e^{\gamma
s}|f_{s}^{0}|^{\alpha}ds].
\end{equation*}
By letting $\epsilon \downarrow0$, there exists a constant $C_{\alpha
}:=C_{\alpha}(T,L^{w},\underline{\sigma})$ such that
\begin{equation*}
|Y_{t}|^{\alpha}\leq C_{\alpha}\mathbb{\hat{E}}_{t}[|\xi|^{\alpha}+\int
_{t}^{T}|f_{s}^{0}|^{\alpha}ds].
\end{equation*}
It follows that
\begin{equation*}
\mathbb{\hat{E}}[\sup_{t\in \lbrack0,T]}|Y_{t}|^{\alpha}]\leq C_{\alpha }%
\mathbb{\hat{E}}[\sup_{t\in \lbrack0,T]}\mathbb{\hat{E}}_{t}[|\xi|^{\alpha
}+\int_{0}^{T}|f_{s}^{0}|^{\alpha}ds]].
\end{equation*}
\textbf{(ii)} By (\ref{e311}) and (\ref{e312}) and setting $\gamma=\alpha
L^{w}+\frac {\alpha(L^{w})^{2}}{\underline{\sigma}^{2}(\alpha-1)}+1$, then
we get%
\begin{equation*}
\tilde{Y}_{t}{}^{\alpha/2}e^{\gamma t}\leq \mathbb{\hat{E}}_{t}[(|\xi
|^{2}+\epsilon_{\alpha})^{\alpha/2}e^{\gamma T}+\int_{t}^{T}\alpha e^{\gamma
s}\tilde{Y}_{s}{}^{\alpha/2-1/2}f_{s}^{0}ds].
\end{equation*}
By letting $\epsilon \downarrow0$, we get%
\begin{equation}
|Y_{t}|^{\alpha}\leq C_{\alpha}\mathbb{\hat{E}}_{t}[|\xi|^{\alpha}+\int
_{t}^{T}|Y_{s}|^{\alpha-1}f_{s}^{0}ds].  \label{e314}
\end{equation}
From this we get%
\begin{align}
|Y_{t}|^{\alpha} & \leq C_{\alpha}\{ \mathbb{\hat{E}}_{t}[|\xi|^{\alpha }]+%
\mathbb{\hat{E}}_{t}[\sup_{s\in
\lbrack0,T]}|Y_{s}|^{\alpha-1}\int_{0}^{T}f_{s}^{0}ds]\}  \notag \\
& \leq C_{\alpha}\{ \mathbb{\hat{E}}_{t}[|\xi|^{\alpha}]+(\mathbb{\hat{E}}%
_{t}[\sup_{s\in \lbrack0,T]}|Y_{s}|^{(\alpha-1)\alpha^{\prime \ast}}])^{%
\frac {1}{\alpha^{\prime \ast}}}(\mathbb{\hat{E}}_{t}[(%
\int_{0}^{T}f_{s}^{0}ds)^{\alpha^{\prime }}])^{\frac{1}{\alpha^{\prime }}}\},
\label{e315}
\end{align}
where $\alpha^{\prime \ast}=\frac{\alpha^{\prime }}{\alpha^{\prime }-1}$.
Thus we obtain%
\begin{equation*}
\mathbb{\hat{E}}[\sup_{t\in \lbrack0,T]}|Y_{t}|^{\alpha}]\leq
C_{\alpha}\{||\xi||_{\alpha,\mathcal{E}}^{\alpha}+||\sup_{s\in
\lbrack0,T]}|Y_{s}|^{\alpha-1}||_{\alpha^{\prime \ast},\mathcal{E}%
}||\int_{0}^{T}f_{s}^{0}ds||_{\alpha^{\prime },\mathcal{E}}\}.
\end{equation*}
It is easy to check that $(\alpha-1)\alpha^{\prime \ast}<\alpha$, then by (%
\ref{e2song1}) there exists a constant $C$ only depending on $\alpha$ and $%
\alpha^{\prime }$ such that%
\begin{equation*}
||\sup_{s\in \lbrack0,T]}|Y_{s}|^{\alpha-1}||_{\alpha^{\prime \ast},\mathcal{%
E}}\leq C\{(\mathbb{\hat{E}}[\sup_{t\in \lbrack0,T]}|Y_{t}|^{\alpha}])^{%
\frac{\alpha-1}{\alpha}}+(\mathbb{\hat{E}}[\sup_{t\in
\lbrack0,T]}|Y_{t}|^{\alpha}])^{\frac{1}{\alpha^{\prime \ast}}}\}.
\end{equation*}
By Young's inequality, we have%
\begin{equation*}
CC_{\alpha}(\mathbb{\hat{E}}[\sup_{t\in \lbrack0,T]}|Y_{t}|^{\alpha}])^{%
\frac{\alpha-1}{\alpha}}||\int_{0}^{T}f_{s}^{0}ds||_{\alpha^{\prime },%
\mathcal{E}}\leq \frac{1}{4}\mathbb{\hat{E}}[\sup_{t\in
\lbrack0,T]}|Y_{t}|^{\alpha}]+C_{1}C_{\alpha}||\int_{0}^{T}f_{s}^{0}ds||_{%
\alpha _{1},\mathcal{E}}^{\alpha}
\end{equation*}
and%
\begin{equation*}
CC_{\alpha}(\mathbb{\hat{E}}[\sup_{t\in \lbrack0,T]}|Y_{t}|^{\alpha}])^{%
\frac{1}{\alpha^{\prime \ast}}}||\int_{0}^{T}f_{s}^{0}ds||_{\alpha _{1},%
\mathcal{E}}\leq \frac{1}{4}\mathbb{\hat{E}}[\sup_{t\in
\lbrack0,T]}|Y_{t}|^{\alpha}]+C_{1}C_{\alpha}||\int_{0}^{T}f_{s}^{0}ds||_{%
\alpha _{1},\mathcal{E}}^{\alpha^{\prime }},
\end{equation*}
where $C_{1}$ is a constant only depending on $\alpha$ and $\alpha^{\prime }$%
. Thus we obtain (\ref{e310}).
\end{proof}

\begin{proposition}
\label{pron2} Let $f_{i}$, $i=1,2$, satisfy (H1) and (H2'). Assume%
\begin{equation*}
Y_{t}^{i}=\xi^{i}+\int_{t}^{T}f_{i}(s,Y_{s}^{i},Z_{s}^{i})ds-%
\int_{t}^{T}Z_{s}^{i}dB_{s}-(K_{T}^{i}-K_{t}^{i}),
\end{equation*}
where $Y^{i}\in \mathbb{S}^{\alpha}(0,T)$, $Z^{i}\in \mathbb{H}%
^{\alpha}(0,T) $, $K^{i}$ is a decreasing process with $K_{0}^{i}=0$ and $%
K_{T}^{i}\in \mathbb{L}^{\alpha}(\Omega_{T})$ for some $\alpha>1$. Set $\hat{%
Y}_{t}=Y_{t}^{1}-Y_{t}^{2},\hat{Z}_{t}=Z_{t}^{1}-Z_{t}^{2}$ and $\hat{K}%
_{t}=K_{t}^{1}-K_{t}^{2}$. Then there exists a constant $C_{\alpha}:=C(%
\alpha ,T,\underline{\sigma},L^{w})>0$ such that%
\begin{equation}
\mathbb{\hat{E}}[(\int_{0}^{T}|\hat{Z}_{s}|^{2}ds)^{\frac{\alpha}{2}}]\leq
C_{\alpha}\{ \Vert \hat{Y}\Vert_{\mathbb{S}^{\alpha}}^{\alpha}+\Vert \hat {Y}%
\Vert_{\mathbb{S}^{\alpha}}^{\frac{\alpha}{2}}\sum_{i=1}^{2}[||Y^{i}||_{%
\mathbb{S}^{\alpha}}^{\frac{\alpha}{2}}+||\int_{0}^{T}f_{s}^{i,0}ds||_{%
\alpha,G}^{\frac{\alpha}{2}}]\},  \label{e316}
\end{equation}
where $f_{s}^{i,0}=|f_{i}(s,0,0)|+L^{w}\varepsilon$, $i=1,2$.
\end{proposition}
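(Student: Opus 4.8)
The plan is to estimate the difference process $(\hat Y,\hat Z,\hat K)$, which satisfies the subtracted equation
\begin{equation*}
\hat Y_t=\hat\xi+\int_t^T\bigl(f_1(s,Y^1_s,Z^1_s)-f_2(s,Y^2_s,Z^2_s)\bigr)ds-\int_t^T\hat Z_s\,dB_s-(\hat K_T-\hat K_t),
\end{equation*}
by applying It\^o's formula to $|\hat Y_t|^2$, exactly as in Proposition \ref{pron1}. The key difference is that $\hat K=K^1-K^2$ is no longer a decreasing process, so I cannot directly bound the term $\int_0^T\hat Y_s\,d\hat K_s$ by a $G$-martingale argument. This is where Lemma \ref{lem3.3} enters: I would split the $d\hat K$ integral as $\int_0^T\hat Y_s^+\,dK^1_s-\int_0^T\hat Y_s^+\,dK^2_s-\int_0^T\hat Y_s^-\,dK^1_s+\int_0^T\hat Y_s^-\,dK^2_s$ and use the lemma to control the combination that forms a decreasing $G$-martingale, treating the remaining pieces as genuine cost terms. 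Concretely, writing $\hat f(s)=f_1(s,Y^1_s,Z^1_s)-f_2(s,Y^2_s,Z^2_s)$, the It\^o expansion gives
\begin{equation*}
\int_0^T|\hat Z_s|^2 d\langle B\rangle_s=|\hat\xi|^2-|\hat Y_0|^2+\int_0^T 2\hat Y_s\hat f(s)\,ds-\int_0^T 2\hat Y_s\hat Z_s\,dB_s-\int_0^T 2\hat Y_s\,d\hat K_s,
\end{equation*}
and after raising to the power $\alpha/2$ and taking $\mathbb{\hat E}$ I would collect the $\hat Z$-term on the left using non-degeneracy ($d\langle B\rangle_s\ge\underline\sigma^2\,ds$).

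First I would handle the generator difference. Using (H2') to Lipschitz-estimate $|\hat f(s)|\le L^w(|\hat Y_s|+|\hat Z_s|)$ plus the inhomogeneity depending on $f_i(s,0,0)$, the cross term $\int_0^T 2\hat Y_s\hat f(s)\,ds$ is bounded by a constant times $\sup_s|\hat Y_s|^2$ plus $\varepsilon$-type contributions, together with a small multiple of $\int_0^T|\hat Z_s|^2\,ds$ absorbed on the left-hand side via Young's inequality. Next I would deal with the stochastic integral $\int_0^T\hat Y_s\hat Z_s\,dB_s$ through Proposition \ref{proA.5} (the BDG-type inequality under $G$), yielding a bound of the form $\|\hat Y\|_{\mathbb S^\alpha}^{\alpha/2}\,(\mathbb{\hat E}[(\int_0^T|\hat Z_s|^2ds)^{\alpha/2}])^{1/2}$, which again gets absorbed by Young into $\tfrac14\mathbb{\hat E}[(\int_0^T|\hat Z_s|^2ds)^{\alpha/2}]$ plus a $\|\hat Y\|_{\mathbb S^\alpha}^\alpha$ term.

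The main obstacle is the $\hat K$-integral, and it is precisely here that I expect Lemma \ref{lem3.3} to be decisive. The positive-part/negative-part decomposition means $-\int_0^T\hat Y_s^+\,dK^1_s+\int_0^T\hat Y_s^-\,dK^2_s$ contributes with a favorable sign (it is, up to sign, a decreasing $G$-martingale by Lemma \ref{lem3.3}, so its $G$-expectation is controlled), whereas the complementary terms $\int_0^T\hat Y_s^+\,dK^2_s$ and $-\int_0^T\hat Y_s^-\,dK^1_s$ must be estimated by H\"older's inequality, bounding $|\int_0^T\hat Y_s\,dK^i_s|^{\alpha/2}$ by $\sup_s|\hat Y_s|^{\alpha/2}\,|K_T^i|^{\alpha/2}$ and then applying the a priori control $\mathbb{\hat E}[|K_T^i|^\alpha]\le C_\alpha\{\|Y^i\|_{\mathbb S^\alpha}^\alpha+\mathbb{\hat E}[(\int_0^T f_s^{i,0}ds)^\alpha]\}$ from (\ref{e34}) of Proposition \ref{pron1}. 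Collecting all pieces, moving the two absorbed $\hat Z$-contributions to the left, and invoking $\mathbb{\hat E}[(\int_0^T f_s^{i,0}ds)^\alpha]\le\|\int_0^T f_s^{i,0}ds\|_{\alpha,G}^\alpha$ to match the stated right-hand side yields precisely (\ref{e316}).
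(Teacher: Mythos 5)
Your overall skeleton (It\^o's formula on $|\hat Y_t|^2$, Lipschitz estimate on the generator difference, BDG plus Young to absorb the $\hat Z$-terms, H\"older plus the bound (\ref{e34}) on $\|K_T^i\|_{\alpha,G}$) is the paper's route, but the centerpiece of your treatment of the $d\hat K$-term is wrong on two counts. First, Lemma \ref{lem3.3} is not available here: in this proposition the $K^i$ are only assumed to be \emph{decreasing processes} with $K_T^i\in\mathbb{L}^{\alpha}(\Omega_T)$, not decreasing $G$-martingales (this is exactly the point of Remark \ref{remn2}), and moreover $\hat Y$ lies only in $\mathbb{S}^{\alpha}(0,T)$, not in $S_G^{\alpha}(0,T)$ as Lemma \ref{lem3.3} requires. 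Second, your sign bookkeeping is reversed. In the It\^o expansion the relevant term is $-\int_0^T 2\hat Y_s\,d\hat K_s$, whose four pieces are $-2\int\hat Y_s^+dK_s^1\geq 0$, $-2\int\hat Y_s^-dK_s^2\geq 0$ (the genuinely bad terms, and note they form $-2(\int\hat Y^+dK^1+\int\hat Y^-dK^2)$, the \emph{negative} of the Lemma \ref{lem3.3} combination), and $2\int\hat Y_s^-dK_s^1\leq 0$, $2\int\hat Y_s^+dK_s^2\leq 0$ (the droppable ones). Your proposed "favorable" combination $-\int\hat Y^+dK^1+\int\hat Y^-dK^2$ mixes one of each: it contains the nonnegative piece $-\int\hat Y^+dK^1$, which cannot be discarded, so the argument as written fails at exactly the step you flag as decisive. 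Even if the $K^i$ were $G$-martingales, saying "its $G$-expectation is controlled" does not help once the identity has been raised to the power $\alpha/2$; the martingale property of the Lemma \ref{lem3.3} process is exploited in the paper only in Propositions \ref{pro3.4} and \ref{pro3.5}, where It\^o is applied directly to $(|\hat Y_t|^2+\epsilon_\alpha)^{\alpha/2}e^{\gamma t}$ and a \emph{conditional} expectation annihilates the martingale term linearly.

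The repair is simpler than your plan and is what the paper actually does: no positive/negative-part decomposition at all. Bound crudely
\begin{equation*}
\Bigl|\int_0^T\hat Y_s\,d\hat K_s\Bigr|^{\alpha/2}\leq \sup_{s\in[0,T]}|\hat Y_s|^{\alpha/2}\bigl(|K_T^1|+|K_T^2|\bigr)^{\alpha/2},
\end{equation*}
apply the Cauchy--Schwarz inequality under $\mathbb{\hat E}$ to get $\Vert\hat Y\Vert_{\mathbb{S}^{\alpha}}^{\alpha/2}(\Vert K_T^1\Vert_{\alpha,G}^{\alpha/2}+\Vert K_T^2\Vert_{\alpha,G}^{\alpha/2})$, and then invoke (\ref{e34}) of Proposition \ref{pron1} to convert $\Vert K_T^i\Vert_{\alpha,G}^{\alpha/2}$ into $\Vert Y^i\Vert_{\mathbb{S}^{\alpha}}^{\alpha/2}+\Vert\int_0^Tf_s^{i,0}ds\Vert_{\alpha,G}^{\alpha/2}$; this is precisely the "fallback" H\"older estimate you describe for your complementary terms, applied uniformly, and it yields (\ref{e316}) with no appeal to Lemma \ref{lem3.3}. (One further small point: the paper isolates the generator difference at the point $(Y^2,Z^2)$, i.e.\ $\hat f_s=|f_1(s,Y_s^2,Z_s^2)-f_2(s,Y_s^2,Z_s^2)|+L^w\varepsilon$, treating the $f_1$-Lipschitz increment in $(\hat Y,\hat Z)$ by Young absorption exactly as you propose, so that part of your plan stands.)
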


\begin{proof}
Applying It\^{o}'s formula to $|\hat{Y}_{t}|^{2}$, by similar analysis as
that in Proposition \ref{pron1}, we have%
\begin{equation*}
||\hat{Z}||_{\mathbb{H}^{\alpha}}^{\alpha}\leq C_{\alpha}\{ \Vert \hat{Y}%
\Vert_{\mathbb{S}^{\alpha}}^{\alpha}+\Vert \hat{Y}\Vert_{\mathbb{D}%
^{\alpha}}^{\frac{\alpha}{2}}[||K_{T}^{1}||_{\alpha,G}^{\frac{\alpha}{2}%
}+||K_{T}^{2}||_{\alpha,G}^{\frac{\alpha}{2}}+||\int_{0}^{T}\hat{f}%
_{s}ds||_{\alpha ,G}^{\frac{\alpha}{2}}]\},
\end{equation*}
where $\hat{f}%
_{s}=|f_{1}(s,Y_{s}^{2},Z_{s}^{2})-f_{2}(s,Y_{s}^{2},Z_{s}^{2})|+L^{w}%
\varepsilon$. By Proposition \ref{pron1}, we obtain%
\begin{align*}
& ||K_{T}^{1}||_{\alpha,G}^{\frac{\alpha}{2}}+||K_{T}^{2}||_{\alpha,G}^{%
\frac{\alpha}{2}}+||\int_{0}^{T}\hat{f}_{s}ds||_{\alpha,G}^{\frac{\alpha}{2}}
\\
& \leq C_{\alpha}\{||Y^{1}||_{\mathbb{S}^{\alpha}}^{\frac{\alpha}{2}%
}+||Y^{2}||_{\mathbb{S}^{\alpha}}^{\frac{\alpha}{2}}+||%
\int_{0}^{T}f_{s}^{1,0}ds||_{\alpha,G}^{\frac{\alpha}{2}}+||%
\int_{0}^{T}f_{s}^{2,0}ds||_{\alpha,G}^{\frac{\alpha}{2}}\}.
\end{align*}
Thus we get (\ref{e316}).
\end{proof}

\begin{proposition}
\label{pro3.5} Let $\xi ^{i}\in L_{G}^{\beta }(\Omega _{T})$ with $\beta >1$%
, $i=1,2$, and $f_{i}$ satisfy (H1) and (H2'). Assume that $%
(Y^{i},Z^{i},K^{i})\in \mathfrak{S}_{G}^{\alpha }(0,T)$ for some $1<\alpha
<\beta $ are the solutions of equation (\ref{e3}) corresponding to $\xi ^{i}$
and $f_{i}$ . Set $\hat{Y}_{t}=Y_{t}^{1}-Y_{t}^{2},\hat{Z}%
_{t}=Z_{t}^{1}-Z_{t}^{2}$ and $\hat{K}_{t}=K_{t}^{1}-K_{t}^{2}$. Then

\begin{description}
\item[(i)] There exists a constant $C_{\alpha}:=C(\alpha,T,\underline{\sigma
},L_{1}^{w})>0$ such that
\begin{equation}
|\hat{Y}_{t}|^{\alpha}\leq C_{\alpha}\mathbb{\hat{E}}_{t}[|\hat{\xi}%
|^{\alpha }+\int_{t}^{T}|\hat{f}_{s}|^{\alpha}ds],  \label{e317}
\end{equation}
where $\hat{f}%
_{s}=|f_{1}(s,Y_{s}^{2},Z_{s}^{2})-f_{2}(s,Y_{s}^{2},Z_{s}^{2})|+L_{1}^{w}%
\varepsilon$.

\item[(ii)] For any given $\alpha^{\prime }$ with $\alpha<\alpha^{\prime
}<\beta$, there exists a constant $C_{\alpha, \alpha^{\prime }}$ depending
on $\alpha $, $\alpha^{\prime }$, $T$, $\underline{\sigma}$, $L^{w}$ such
that%
\begin{align}
\mathbb{\hat{E}}[\sup_{t\in \lbrack0,T]}|\hat{Y}_{t}|^{\alpha}] & \leq
C_{\alpha, \alpha^{\prime }}\{ \mathbb{\hat{E}}[\sup_{t\in \lbrack0,T]}%
\mathbb{\hat{E}}_{t}[|\hat{\xi }|^{\alpha}]]  \notag \\
& +(\mathbb{\hat{E}}[\sup_{t\in \lbrack0,T]}\mathbb{\hat{E}}%
_{t}[(\int_{0}^{T}\hat{f}_{s}ds)^{\alpha^{\prime }}]])^{\frac{\alpha}{%
\alpha^{\prime }}}+\mathbb{\hat {E}}[\sup_{t\in \lbrack0,T]}\mathbb{\hat{E}}%
_{t}[(\int_{0}^{T}\hat{f}_{s}ds)^{\alpha^{\prime }}]]\}.  \label{e318}
\end{align}
\end{description}
\end{proposition}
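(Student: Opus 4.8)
The plan is to reduce Proposition \ref{pro3.5} to the single-equation computation carried out in Proposition \ref{pro3.4}, by viewing $\hat Y$ as the solution of a $G$-BSDE for the difference. Subtracting the two equations gives
\[\hat Y_t = \hat\xi + \int_t^T [f_1(s,Y_s^1,Z_s^1) - f_2(s,Y_s^2,Z_s^2)]\,ds - \int_t^T \hat Z_s\,dB_s - (\hat K_T - \hat K_t),\]
with $\hat K = K^1 - K^2$. Setting $\bar f(s,y,z) := f_1(s,y+Y_s^2,z+Z_s^2) - f_2(s,Y_s^2,Z_s^2)$, the process $\hat Y$ solves the $G$-BSDE with generator $\bar f$ and terminal value $\hat\xi$. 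By (H2') for $f_1$, the generator $\bar f$ is Lipschitz in $(y,z)$ with constant $L_1^w$, and $|\bar f(s,0,0)| + L_1^w\varepsilon = \hat f_s$, so $\hat f_s$ plays exactly the role of $f_s^0$ from Proposition \ref{pro3.4}. Note, however, that Proposition \ref{pro3.4} cannot be applied as a black box: its hypotheses require a single decreasing $G$-martingale, whereas $\hat K = K^1 - K^2$ is a difference of two decreasing $G$-martingales and is therefore neither monotone nor a $G$-martingale. This is the main obstacle, and it is precisely what Lemma \ref{lem3.3} is designed to overcome.

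To carry out the estimate I would set $\tilde{\hat Y}_t = |\hat Y_t|^2 + \epsilon_\alpha$ and apply It\^o's formula to $\tilde{\hat Y}_t^{\alpha/2}e^{\gamma t}$, reproducing the chain leading to (\ref{e311})--(\ref{e312}) verbatim except for the term arising from $d\hat K = dK^1 - dK^2$. Splitting $\hat Y_s = \hat Y_s^+ - \hat Y_s^-$ gives
\[-\alpha e^{\gamma s}\tilde{\hat Y}_s^{\alpha/2-1}\hat Y_s\,d\hat K_s = -\alpha e^{\gamma s}\tilde{\hat Y}_s^{\alpha/2-1}[\hat Y_s^+\,dK_s^1 + \hat Y_s^-\,dK_s^2] + \alpha e^{\gamma s}\tilde{\hat Y}_s^{\alpha/2-1}[\hat Y_s^-\,dK_s^1 + \hat Y_s^+\,dK_s^2].\]
The first bracket is exactly the integrand of Lemma \ref{lem3.3}, with $X_s = \alpha e^{\gamma s}\tilde{\hat Y}_s^{\alpha/2-1}\hat Y_s$, so that $X_s^+$ is paired with $K^1$ and $X_s^-$ with $K^2$; hence $\int_0^t \alpha e^{\gamma s}\tilde{\hat Y}_s^{\alpha/2-1}[\hat Y_s^+\,dK_s^1 + \hat Y_s^-\,dK_s^2]$ is a (decreasing) $G$-martingale, which I would absorb into the martingale part $M_t$. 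The second bracket is nonpositive, because $K^1$ and $K^2$ are decreasing while all remaining factors are nonnegative, so it may be discarded in deriving the upper bound. This splitting is the only genuinely new ingredient relative to Proposition \ref{pro3.4}.

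With the $d\hat K$ term disposed of, the rest is a repetition of the earlier argument. For (i) I would bound the generator contribution $\int_t^T \alpha e^{\gamma s}\tilde{\hat Y}_s^{\alpha/2-1/2}\hat f_s\,ds$ by Young's inequality as in (\ref{e313}), choose $\gamma$ large enough to absorb the $\int e^{\gamma s}\tilde{\hat Y}_s^{\alpha/2}\,ds$ terms, take conditional $G$-expectation using that $M_t$ is a $G$-martingale, and let $\epsilon\downarrow 0$ to reach (\ref{e317}). For (ii) I would instead retain the sharper form $|\hat Y_s|^{\alpha-1}\hat f_s$ as in (\ref{e314}), apply H\"older inside the conditional expectation to separate $\sup_s|\hat Y_s|^{\alpha-1}$ from $\int_0^T\hat f_s\,ds$, then invoke Song's estimate (\ref{e2song1}) together with Young's inequality to absorb the arising $\tfrac14\,\hat{\mathbb{E}}[\sup_t|\hat Y_t|^\alpha]$ terms into the left-hand side, yielding (\ref{e318}). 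Apart from the two-martingale splitting, every step is identical to the proof of Proposition \ref{pro3.4} with $(Y,\xi,f_s^0,Z,K)$ replaced by $(\hat Y,\hat\xi,\hat f_s,\hat Z,\hat K)$.
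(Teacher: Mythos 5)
Your proposal is correct and takes essentially the same approach as the paper: the published proof likewise applies It\^{o}'s formula to $(|\hat{Y}_{t}|^{2}+\epsilon_{\alpha})^{\alpha/2}e^{\gamma t}$ and disposes of the $d\hat{K}$ term via exactly your splitting, declaring $J_{t}=\int_{0}^{t}\alpha e^{\gamma s}(|\hat{Y}_{s}|^{2}+\epsilon_{\alpha})^{\alpha/2-1}(\hat{Y}_{s}^{+}\,dK_{s}^{1}+\hat{Y}_{s}^{-}\,dK_{s}^{2})$ a $G$-martingale by Lemma \ref{lem3.3} and discarding the remaining cross terms as nonpositive. Your treatment of (i) via Young's inequality and of (ii) via the sharper bound $|\hat{Y}_{s}|^{\alpha-1}\hat{f}_{s}$ combined with (\ref{e2song1}) matches the paper's reduction to the argument of Proposition \ref{pro3.4}.
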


\begin{proof}
For any $\gamma,\epsilon>0$, applying It\^{o}'s formula to $(|\hat{Y}%
_{t}|^{2}+\epsilon_{\alpha})^{\alpha/2}e^{\gamma t}$, where $%
\epsilon_{\alpha }=\epsilon(1-\alpha/2)^{+}$, by similar analysis as in
Proposition \ref{pro3.4}, we have by setting $\gamma=\alpha L^{w}+\alpha+%
\frac {\alpha(L^{w})^{2}}{\underline{\sigma}^{2}(\alpha-1)}$%
\begin{align*}
& (|\hat{Y}_{t}|^{2}+\epsilon_{\alpha})^{\alpha/2}e^{\gamma
t}+\int_{t}^{T}\alpha e^{\gamma s}(|\hat{Y}_{s}|^{2}+\epsilon_{\alpha})^{%
\alpha/2-1}\hat{Y}_{s}\hat{Z}_{s}dB_{s}+J_{T}-J_{t} \\
& \leq(|\hat{\xi}|^{2}+\epsilon_{\alpha})^{\alpha/2}e^{\gamma
T}+\int_{t}^{T}e^{\gamma s}|\hat{f}_{s}|^{\alpha}ds
\end{align*}
and
\begin{align*}
& (|\hat{Y}_{t}|^{2}+\epsilon_{\alpha})^{\alpha/2}e^{\gamma
t}+\int_{t}^{T}\alpha e^{\gamma s}(|\hat{Y}_{s}|^{2}+\epsilon_{\alpha})^{%
\alpha/2-1}\hat{Y}_{s}\hat{Z}_{s}dB_{s}+J_{T}-J_{t} \\
& \leq(|\hat{\xi}|^{2}+\epsilon_{\alpha})^{\alpha/2}e^{\gamma
T}+\int_{t}^{T}\alpha e^{\gamma s}(|\hat{Y}_{s}|^{2}+\epsilon_{\alpha})^{%
\alpha /2-1/2}\hat{f}_{s}ds,
\end{align*}
where%
\begin{equation*}
J_{t}=\int_{0}^{t}\alpha e^{\gamma s}(|\hat{Y}_{s}|^{2}+\epsilon_{\alpha
})^{\alpha/2-1}(\hat{Y}_{s}^{+}dK_{s}^{1}+\hat{Y}_{s}^{-}dK_{s}^{2}).
\end{equation*}
By Lemma \ref{lem3.3}, $J_{t}$ is a $G$-martingale. Taking conditional $G$%
-expectation and letting $\epsilon \downarrow0$, we obtain a constant $%
C_{\alpha}:=C_{\alpha}(T,L_{1}^{w},\underline{\sigma})>0$ such that%
\begin{equation*}
|\hat{Y}_{t}|^{\alpha}\leq C_{\alpha}\mathbb{\hat{E}}_{t}[|\hat{\xi}%
|^{\alpha }+\int_{t}^{T}|\hat{f}_{s}|^{\alpha}ds]
\end{equation*}
and
\begin{equation*}
|\hat{Y}_{t}|^{\alpha}\leq C_{\alpha}\mathbb{\hat{E}}_{t}[|\hat{\xi}%
|^{\alpha }+\int_{t}^{T}|\hat{Y}_{s}|^{\alpha-1}\hat{f}_{s}ds].
\end{equation*}
By the same analysis as that in Proposition \ref{pro3.4}, we get (\ref{e318}%
).
\end{proof}

\section{Existence and uniqueness of $G$-BSDEs}

In order to prove the existence of equation (\ref{e3}), we start with the
simple case $f(t,\omega ,y,z)=h(y,z)$, $\xi =\varphi (B_{T})$. Here $h\in
C_{0}^{\infty }(\mathbb{R}^{2})$, $\varphi \in C_{b.Lip}(\mathbb{R}^{2})$.
For this case, we can obtain the solution of equation (\ref{e3}) from the
following nonlinear partial differential equation:
\begin{equation}
\partial _{t}u+G(\partial _{xx}^{2}u)+h(u,\partial _{x}u)=0,u(T,x)=\varphi
(x).  \label{e4}
\end{equation}%
Then we approximate the solution of equation (\ref{e3}) with more
complicated $f$ by those of equations (\ref{e3}) with much simpler $\{f_{n}\}
$. More precisely, assume that $\Vert f_{n}-f\Vert _{M_{G}^{\beta
}}\rightarrow 0$ and $(Y^{n},Z^{n},K^{n})$ is the solution of the following $%
G$-BSDE
\begin{equation*}
Y_{t}^{n}=\xi
+\int_{t}^{T}f_{n}(s,Y_{s}^{n},Z_{s}^{n})ds-%
\int_{t}^{T}Z_{s}^{n}dB_{s}-(K_{T}^{n}-K_{t}^{n}).
\end{equation*}%
We try to prove that $(Y^{n},Z^{n},K^{n})$ converges to $(Y,Z,K)$ and $%
(Y,Z,K)$ is the solution of the following $G$-BSDE
\begin{equation*}
Y_{t}=\xi
+\int_{t}^{T}f(s,Y_{s},Z_{s})ds-\int_{t}^{T}Z_{s}dB_{s}-(K_{T}-K_{t}).
\end{equation*}

One of the main results of this paper is

\begin{theorem}
\label{the4.1} Assume that $\xi \in L_{G}^{\beta}(\Omega_{T})$ for some $%
\beta>1$ and $f$ satisfies (H1) and (H2). Then equation (\ref{e3}) has a
unique solution $(Y,Z,K)$. Moreover, for any $1<\alpha<\beta$ we have $Y\in
S^\alpha_G(0,T)$, $Z\in H_{G}^{\alpha}(0,T)$ and $K_{T}\in L_{G}^{\alpha
}(\Omega_{T})$.
\end{theorem}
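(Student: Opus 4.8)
My plan is to read off uniqueness directly from the stability estimates of Section~3, and to obtain existence by approximating the data with a class of explicitly solvable generators built from the fully nonlinear PDE~(\ref{e4}).

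\textbf{Uniqueness.} Let $(Y^1,Z^1,K^1)$ and $(Y^2,Z^2,K^2)$ in $\mathfrak{S}_G^\alpha(0,T)$ be two solutions associated with the same $\xi$ and $f$. Since (H2) implies (H2') with $L^w=L$ and arbitrary $\varepsilon>0$, I apply Proposition~\ref{pro3.5} with $f_1=f_2=f$. Then $\hat\xi=0$ and $\hat f_s=L\varepsilon$, so (\ref{e317}) gives $|\hat Y_t|^\alpha\le C_\alpha(L\varepsilon)^\alpha T$; letting $\varepsilon\downarrow0$ forces $Y^1=Y^2$ q.s. Inserting $\hat Y\equiv0$ into Proposition~\ref{pron2} (estimate (\ref{e316})) yields $\mathbb{\hat{E}}[(\int_0^T|\hat Z_s|^2ds)^{\alpha/2}]=0$, i.e.\ $Z^1=Z^2$; subtracting the two equations~(\ref{e3}) then gives $\hat K\equiv0$.

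\textbf{The solvable case.} First I treat $f(t,\omega,y,z)=h(y,z)$ with $h\in C_0^\infty(\mathbb{R}^2)$ and $\xi=\varphi(B_T)$, $\varphi\in C_{b.Lip}(\mathbb{R})$. Because the $1$-dimensional $G$ is Lipschitz and $h,\varphi$ are smooth, the PDE~(\ref{e4}) admits a solution $u\in C^{1,2}$ with bounded spatial derivatives. Setting $Y_t=u(t,B_t)$, $Z_t=\partial_xu(t,B_t)$ and
\begin{equation*}
K_t=\frac12\int_0^t\partial_{xx}^2u(s,B_s)\,d\langle B\rangle_s-\int_0^tG(\partial_{xx}^2u(s,B_s))\,ds,
\end{equation*}
$G$-It\^o's formula applied to $u(t,B_t)$ together with the PDE used to eliminate $\partial_tu$ shows that $(Y,Z,K)$ solves~(\ref{e3}). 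Moreover $K_t=\int_0^t\eta_s\,d\langle B\rangle_s-\int_0^t2G(\eta_s)\,ds$ with $\eta_s=\frac12\partial_{xx}^2u(s,B_s)$, by positive homogeneity of $G$, which is exactly the canonical representation of a decreasing $G$-martingale with $K_0=0$. Cylindrical terminal data $\xi=\varphi(B_{t_1},\dots,B_{t_n})$ are then handled by solving~(\ref{e4}) piecewise backward along the grid $t_1<\cdots<t_n$.

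\textbf{The general case and the main obstacle.} For general $\xi\in L_G^\beta(\Omega_T)$ and $f$ satisfying (H1), (H2), the difficulty is that $f$ is only Lipschitz in $(y,z)$ and genuinely path-dependent, so no single PDE applies. Here I use a smooth partition of unity $\{\phi_i\}$ on $\mathbb{R}^2$ to freeze the $(y,z)$-dependence, approximating $f$ by $\sum_i\phi_i(y,z)f(t,\omega,y_i,z_i)$, and reduce the frozen coefficients $f(\cdot,\cdot,y_i,z_i)$ to simple processes; solving the resulting finite systems produces a Galerkin-type approximating sequence $(Y^n,Z^n,K^n)$ with generators $f_n$ satisfying $\|f_n-f\|_{M_G^\beta}\to0$ and terminal data $\xi_n\to\xi$ in $L_G^\beta$. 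Fix $1<\alpha<\alpha'<\beta$. The a priori bounds (\ref{e310}) together with Proposition~\ref{pron1} keep this sequence bounded in $\mathfrak{S}_G^\alpha(0,T)$, while the stability estimate (\ref{e318}) applied to the differences shows $(Y^n)$ is Cauchy in $S_G^\alpha(0,T)$ and, via (\ref{e316}), $(Z^n)$ is Cauchy in $H_G^\alpha(0,T)$; consequently $K_T^n=\xi_n-Y_0^n+\int_0^Tf_n(s,Y_s^n,Z_s^n)ds-\int_0^TZ_s^ndB_s$ is Cauchy in $L_G^\alpha(\Omega_T)$. Throughout, Song's estimate (\ref{e2song1}), which exploits the gap $\beta-\alpha>0$, is what lets me bound the $\mathcal E$-type norms on the right of Propositions~\ref{pro3.4} and~\ref{pro3.5} by the $L_G^{\alpha'}$- and $M_G^{\alpha'}$-norms of the data. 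The hard part is the limit passage for $K$: decreasing $G$-martingales do not form a linear space, so I must verify the property is preserved under this convergence. This is precisely where Lemma~\ref{lem3.3} enters, keeping the $\int X\,dK$-terms inside the $G$-stochastic calculus and letting me identify the limit $K$ as a decreasing $G$-martingale with $K_T\in L_G^\alpha(\Omega_T)$. Once this is done, $(Y,Z,K)\in\mathfrak{S}_G^\alpha(0,T)$ solves~(\ref{e3}) for every $1<\alpha<\beta$.
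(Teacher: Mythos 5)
Your uniqueness argument and the overall architecture of your existence scheme coincide with the paper's: uniqueness via Propositions \ref{pron2} and \ref{pro3.5} with $f_1=f_2=f$ and $\varepsilon\downarrow 0$; the basic case via the PDE (\ref{e4}); freezing the $(y,z)$-dependence by a partition of unity (the paper's Steps 4--6); and convergence of $(Y^n,Z^n,K^n)$ via Propositions \ref{pro3.4}, \ref{pron2}, \ref{pro3.5} together with Song's estimate (\ref{e2song1}). However, two steps contain genuine gaps. First, for $\xi=\varphi(B_T)$ with $\varphi$ only in $C_{b.Lip}$, your claim that (\ref{e4}) admits a $C^{1,2}$ solution with bounded derivatives on all of $[0,T]$ is unjustified: Krylov's theorem gives $\Vert u\Vert_{C^{1+\alpha/2,2+\alpha}([0,T-\kappa]\times\mathbb{R})}<\infty$ only for each $\kappa>0$, and interior regularity generally degenerates at the terminal time when the data is merely Lipschitz. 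The paper therefore applies It\^{o}'s formula on $[t_1,T-\kappa]$, proves the uniform Lipschitz bound (\ref{ea4}) by a comparison-theorem argument, and then lets $\kappa\downarrow 0$ in (\ref{eab4}); your direct application of It\^{o}'s formula up to $T$ needs this repair (it is fixable, but it is not a cosmetic omission).

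The more serious gap is hidden in ``cylindrical terminal data are then handled by solving (\ref{e4}) piecewise backward along the grid,'' together with your assertion that Lemma \ref{lem3.3} is what identifies the limit $K$ as a decreasing $G$-martingale. Already for $\xi=\psi(B_{t_1},B_T-B_{t_1})$, the backward-grid recipe produces a family $(Y^x,Z^x,K^x)$ indexed by the frozen parameter $x$, into which one must substitute the random variable $x=B_{t_1}$. Because $\mathbb{\hat{E}}_t$ is sublinear, the $G$-martingale property of $K^x$ does \emph{not} automatically survive this random substitution; this is the central difficulty of the paper's Step 1, Part 2, and it is resolved there by a partition of unity in the $x$-variable (not in $(y,z)$) combined with the extension of conditional $G$-expectation to random variables of the form $\sum_i\eta_iI_{A_i}$ (Lemma \ref{lemA.7} and Proposition \ref{proA.8}): with indicator weights $h_i^n=I_{[-n+i/n,-n+(i+1)/n)}$ the process $\tilde{K}_t^n=\sum_iK_t^{-n+i/n}h_i^n(B_{t_1})$ is a $G$-martingale by construction of the extended conditional expectation, $\tilde{K}^n_t\to K_t$ in $L_G^1$, and hence $\mathbb{\hat{E}}_t[K_s]=K_t$ since $\mathbb{\hat{E}}_t$ is a contraction under $\Vert\cdot\Vert_{1,G}$. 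Your appeal to Lemma \ref{lem3.3} misplaces its role: in the paper that lemma serves the a priori estimates themselves, showing that the processes $M$ and $J$ appearing in the proofs of Propositions \ref{pro3.4} and \ref{pro3.5} are $G$-martingales so that they can be removed by conditional expectation. The limit passage for $K$ along the Cauchy sequence is in fact the easy step ($L_G^1$-stability of the martingale property plus monotonicity of each $K^n$), whereas the random-parameter substitution, which your proposal never addresses, is where the real work lies.
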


\begin{proof}
The uniqueness of the solution is a direct consequence of the a
priori estimates in Proposition 3.8 and Proposition 3.9. By these
estimates it also suffices to prove the existence for the case $\xi
\in L_{ip}(\Omega_{T})$ and then pass to the limit for the general
situation.

Step 1. $f(t,\omega, y,z)= h(y,z)$ with $h\in C_{0}^{\infty}(\mathbb{R}^{2})$%
.

Part 1. We first consider the case $\xi=\varphi(B_{T}-B_{t_{1}})$ with $%
\varphi \in C_{b,Lip}(\mathbb{R})$ and $t_{1}<T$. Let $u$ be the solution of
equation (\ref{e4}) with terminal condition $\varphi$. By Theorem 6.4.3 in
Krylov \cite{Kr} (see also Theorem 4.4 in Appendix C in Peng \cite{P10}),
there exists a constant $\alpha \in(0,1)$ such that for each $\kappa>0$,%
\begin{equation*}
||u||_{C^{1+\alpha/2,2+\alpha}([0,T-\kappa]\times \mathbb{R})}<\infty.
\end{equation*}
Applying It\^{o}'s formula to $u(t,B_{t}-B_{t_{1}})$ on $[t_{1},T-\kappa]$,
we get%
\begin{align}
u(t,B_{t}-B_{t_{1}})= &
u(T-\kappa,B_{T-\kappa}-B_{t_{1}})+\int_{t}^{T-\kappa}h(u,%
\partial_{x}u)(s,B_{s}-B_{t_{1}})ds  \notag \\
& -\int_{t}^{T-\kappa}\partial_{x}u(s,B_{s}-B_{t_{1}})dB_{s}-(K_{T-\kappa
}-K_{t}),  \label{eab4}
\end{align}
where $K_{t}=\frac{1}{2}\int_{t_{1}}^{t}%
\partial_{xx}^{2}u(s,B_{s}-B_{t_{1}})d\langle
B\rangle_{s}-\int_{t_{1}}^{t}G(\partial_{xx}^{2}u(s,B_{s}-B_{t_{1}}))ds$ is
a nonincreasing $G$-martingale. We now prove that there exists a constant $%
L_{1}>0$ such that%
\begin{equation}
|u(t,x)-u(s,y)|\leq L_{1}(\sqrt{|t-s|}+|x-y|),\ t,s\in \lbrack0,T],x,y\in
\mathbb{R}.  \label{ea4}
\end{equation}
For each fixed $x_{0}\in \mathbb{R}$, set $\tilde{u}(t,x)=u(t,x+x_{0})$, it
is easy to check that $\tilde{u}$ is the solution of the following PDE:%
\begin{equation}
\partial_{t}\tilde{u}+G(\partial_{xx}^{2}\tilde{u})+h(\tilde{u},\partial _{x}%
\tilde{u})=0,\ \tilde{u}(T,x)=\varphi(x+x_{0}).  \label{ee4}
\end{equation}
Define $\hat{u}(t,x)=u(t,x)+L_{\varphi}|x_{0}|\exp(L_{h}(T-t))$, where $%
L_{\varphi}$ and $L_{h}$ are the Lipschitz constants of $\varphi$ and $h$
respectively, it is easy to verify that $\hat{u}$ is a supersolution of PDE (%
\ref{ee4}). Thus by comparison theorem (see Theorem 2.4 in Appendix C in
Peng \cite{P10}) we get%
\begin{equation*}
u(t,x+x_{0})\leq u(t,x)+L_{\varphi}|x_{0}|\exp(L_{h}(T-t)),\ t\in
\lbrack0,T],x\in \mathbb{R}.
\end{equation*}
Since $x_{0}$ is arbitrary, we get $|u(t,x)-u(t,y)|\leq \hat{L}|x-y|$, where
$\hat{L}=L_{\varphi}\exp(L_{h}T)$. From this we can get $|\partial
_{x}u(t,x)|\leq \hat{L}$ for each $t\in \lbrack0,T]$, $x\in \mathbb{R}$. On
the other hand, for each fixed $\bar{t}<\hat{t}<T$ and $x\in \mathbb{R}$,
applying It\^{o}'s formula to $u(s,x+B_{s}-B_{\bar{t}})$ on $[\bar{t},\hat{t}%
]$, we get%
\begin{equation*}
u(\bar{t},x)=\mathbb{\hat{E}}[u(\hat{t},x+B_{\hat{t}}-B_{\bar{t}})+\int _{%
\bar{t}}^{\hat{t}}h(u,\partial_{x}u)(s,x+B_{s}-B_{\bar{t}})ds].
\end{equation*}
From this we deduce that
\begin{equation*}
|u(\bar{t},x)-u(\hat{t},x)|\leq \mathbb{\hat{E}}[\hat{L}|B_{\hat{t}}-B_{%
\bar {t}}|+\tilde{L}|\hat{t}-\bar{t}|]\leq(\hat{L}\bar{\sigma}+\tilde{L}%
\sqrt {T})\sqrt{|\hat{t}-\bar{t}|},
\end{equation*}
where $\tilde{L}=\sup_{(x,y)\in \mathbb{R}^{2}}|h(x,y)|$. Thus we get (\ref%
{ea4}) by taking $L_{1}=\max \{ \hat{L},\hat{L}\bar{\sigma}+\tilde{L}\sqrt{T}%
\}$. Letting $\kappa \downarrow0$ in equation (\ref{eab4}), it is easy to
verify that
\begin{equation*}
\mathbb{\hat{E}}[|Y_{T-\kappa}-\xi|^{2}+\int_{T-%
\kappa}^{T}|Z_{t}|^{2}dt+(K_{T-\kappa}-K_{T})^{2}]\rightarrow0,
\end{equation*}
where $Y_{t}=u(t,B_{t}-B_{t_{1}})$ and $Z_{t}=%
\partial_{x}u(t,B_{t}-B_{t_{1}})$. Thus $(Y_{t},Z_{t},K_{t})_{t\in \lbrack
t_{1},T]}$ is a solution of equation (\ref{e3}) with terminal value $%
\xi=\varphi(B_{T}-B_{t_{1}})$. Furthermore, it is easy to check that $Y\in
S_{G}^{\alpha}(t_{1},T)$, $Z\in H_{G}^{\alpha}(t_{1},T)$ and $K_{T}\in
L_{G}^{\alpha}(\Omega_{T})$ for any $\alpha>1$.

Part 2. We now consider the case $\xi =\psi (B_{t_{1}},B_{T}-B_{t_{1}})$
with $\psi \in C_{b,Lip}(\mathbb{R}^{2})$, and the more general case can be
proved similarly. For each fixed $x\in \mathbb{R}$, let $u(\cdot ,x,\cdot )$
be the solution of equation (\ref{e4}) with terminal condition $\psi
(x,\cdot )$. By Part 1, we have%
\begin{align}
u(t,x,B_{t}-B_{t_{1}})=& u(T,x,B_{T}-B_{t_{1}})+\int_{t}^{T}h(u,\partial
_{y}u)(s,x,B_{s}-B_{t_{1}})ds  \notag \\
& -\int_{t}^{T}\partial
_{y}u(s,x,B_{s}-B_{t_{1}})dB_{s}-(K_{T}^{x}-K_{t}^{x}),  \label{par2}
\end{align}%
where $K_{t}^{x}=\frac{1}{2}\int_{t_{1}}^{t}\partial
_{yy}^{2}u(s,x,B_{s}-B_{t_{1}})d\langle B\rangle
_{s}-\int_{t_{1}}^{t}G(\partial _{yy}^{2}u(s,x,B_{s}-B_{t_{1}}))ds$. We
replace $x$ by $B_{t_{1}}$ and get%
\begin{equation*}
Y_{t}=Y_{T}+\int_{t}^{T}h(Y_{s},Z_{s})ds-%
\int_{t}^{T}Z_{s}dB_{s}-(K_{T}-K_{t}),
\end{equation*}%
where $Y_{t}=u(t,B_{t_{1}},B_{t}-B_{t_{1}})$, $Z_{t}=\partial
_{y}u(t,B_{t_{1}},B_{t}-B_{t_{1}})$ and%
\begin{equation*}
K_{t}=\frac{1}{2}\int_{t_{1}}^{t}\partial
_{yy}^{2}u(s,B_{t_{1}},B_{s}-B_{t_{1}})d\langle B\rangle
_{s}-\int_{t_{1}}^{t}G(\partial _{yy}^{2}u(s,B_{t_{1}},B_{s}-B_{t_{1}}))ds.
\end{equation*}%
Now we are in a position to prove $(Y,Z,K)\in \mathfrak{S}_{G}^{\alpha }(0,T)
$. We use the following argument, for each given $n\in \mathbb{N}$, by
partition of unity theorem, there exist $h_{i}^{n}\in C_{0}^{\infty }(%
\mathbb{R})$ with the diameter of support
$\lambda$(supp($h_{i}^{n}$))$<1/n$, $0\leq h_{i}^{n}\leq 1$,
$I_{[-n,n]}(x)\leq \sum_{i=1}^{k_{n}}h_{i}^{n}\leq 1$.
Choose $x_{i}^{n}$ such that $h_{i}^{n}(x_{i}^{n})>0$. Through equation (\ref%
{par2}), we have
\begin{equation*}
Y_{t}^{n}=Y_{T}^{n}+\int_{t}^{T}%
\sum_{i=1}^{n}h(y_{s}^{n,i},z_{s}^{n,i})h_{i}^{n}(B_{t_{1}})ds-%
\int_{t}^{T}Z_{s}^{n}dB_{s}-(K_{T}^{n}-K_{t}^{n}),
\end{equation*}%
where $y_{t}^{n,i}=u(t,x_{i}^{n},B_{t}-B_{t_{1}})$, $z_{t}^{n,i}=\partial
_{y}u(t,x_{i}^{n},B_{t}-B_{t_{1}})$, $Y_{t}^{n}=%
\sum_{i=1}^{n}y_{t}^{n,i}h_{i}^{n}(B_{t_{1}})$, $Z_{t}^{n}=%
\sum_{i=1}^{n}z_{t}^{n,i}h_{i}^{n}(B_{t_{1}})$ and $K_{t}^{n}=%
\sum_{i=1}^{n}K_{t}^{x_{i}^{n}}h_{i}^{n}(B_{t_{1}}).$

By the same analysis as that in Part 1, we can obtain a constant $L_{2}>0$
such that for each $t$, $s\in \lbrack0,T]$, $x$, $x^{\prime}$, $y$, $%
y^{\prime}\in \mathbb{R}$,
\begin{equation*}
|u(t,x,y)-u(s,x^{\prime},y^{\prime})|\leq L_{2}(\sqrt{|t-s|}+|x-x^{\prime
}|+|y-y^{\prime}|).
\end{equation*}
From this we get
\begin{align*}
|Y_{t}-Y_{t}^{n}| & \leq
\sum_{i=1}^{k_{n}}h_{i}^{n}(B_{t_{1}})|u(t,x_{i}^{n},B_{t}-B_{t_{1}})-u(t,B_{t_{1}},B_{t}-B_{t_{1}})|+|Y_{t}|I_{[|B_{t_{1}}|>n]}
\\
& \leq \frac{L_{2}}{n}+\frac{||u||_{\infty}}{n}|B_{t_{1}}|.
\end{align*}
Thus
\begin{equation*}
\hat{\mathbb{E}}[\sup_{t\in \lbrack t_{1},T]}|Y_{t}-Y_{t}^{n}|^{\alpha}]\leq
\hat{\mathbb{E}}[(\frac{L_{2}}{n}+\frac{||u||_{\infty}}{n}%
|B_{t_{1}}|)^{\alpha}]\rightarrow0.
\end{equation*}
By Proposition \ref{pron2}, we have
\begin{equation*}
\hat{\mathbb{E}}[(\int_{t_{1}}^{T}|Z_{s}-Z_{s}^{n}|^{2}ds)^{\alpha/2}]\leq
C_{\alpha}\{ \hat{\mathbb{E}}[\sup_{t\in \lbrack
t_{1},T]}|Y_{t}-Y_{t}^{n}|^{\alpha}]+(\hat{\mathbb{E}}[\sup_{t\in \lbrack
t_{1},T]}|Y_{t}-Y_{t}^{n}|^{\alpha}])^{1/2}\},
\end{equation*}
where $C_{\alpha}>0$ is a constant depending only on $\alpha$, $T$, $L^{w}$
and $\underline{\sigma}$, thus we obtain $\hat{\mathbb{E}}%
(\int_{t_{1}}^{T}|Z_{s}-Z_{s}^{n}|^{2}ds)^{\alpha/2}\rightarrow0$, which
implies that $Z\in H_{G}^{\alpha}(t_{1},T)$ for any $\alpha>1$. By $%
K_{t}=Y_{t}-Y_{t_{1}}+\int_{t_{1}}^{t}h(Y_{s},Z_{s})ds-%
\int_{t_{1}}^{t}Z_{s}dB_{s}$, we obtain $K_{t}\in L_{G}^{\alpha}(\Omega_{t})$
for any $\alpha>1$. We now proceed to prove that $K$ is a $G$-martingale.
Following the framework in Li and Peng \cite{L-P}, we take
\begin{equation*}
h_{i}^{n}(x)=I_{[-n+\frac{i}{n},-n+\frac{i+1}{n})}(x),i=0,\ldots,2n^{2}-1,
\end{equation*}
$h_{2n^{2}}^{n}=1-\sum_{i=0}^{2n^{2}-1}h_{i}^{n}$. Through equation (\ref%
{par2}), we get
\begin{equation*}
\tilde{Y}_{t}^{n}=\tilde{Y}_{T}^{n}+\int_{t}^{T}h(\tilde{Y}_{s}^{n},\tilde {Z%
}_{s}^{n})ds-\int_{t}^{T}\tilde{Z}_{s}^{n}dB_{s}-(\tilde{K}_{T}^{n}-\tilde{K}%
_{t}^{n}),
\end{equation*}
where $\tilde{Y}_{t}^{n}=\sum_{i=0}^{2n^{2}}u(t,-n+\frac{i}{n}%
,B_{t}-B_{t_{1}})h_{i}^{n}(B_{t_{1}})$, $\tilde{Z}_{t}^{n}=%
\sum_{i=0}^{2n^{2}}\partial _{y}u(t,-n+\frac{i}{n}%
,B_{t}-B_{t_{1}})h_{i}^{n}(B_{t_{1}})$ and $\tilde {K}_{t}^{n}=%
\sum_{i=0}^{2n^{2}}K_{t}^{-n+\frac{i}{n}}h_{i}^{n}(B_{t_{1}})$. By
Proposition \ref{pron2}, we have $\hat{\mathbb{E}}[(\int_{t_{1}}^{T}|Z_{s}-%
\tilde{Z}_{s}^{n}|^{2}ds)^{\alpha/2}]\rightarrow0$ for any $\alpha>1$. Thus
we get $\hat{\mathbb{E}}[|K_{t}-\tilde{K}_{t}^{n}|^{\alpha}]\rightarrow0$
for any $\alpha>1$. By Proposition \ref{proA.8}, we obtain for each $%
t_{1}\leq t<s\leq T$,%
\begin{align*}
\hat{\mathbb{E}}[|\hat{\mathbb{E}}_{t}[K_{s}]-K_{t}|] & =\hat{\mathbb{E}}[|%
\mathbb{\hat{E}}_{t}[K_{s}]-\mathbb{\hat{E}}_{t}[\tilde{K}_{s}^{n}]+\tilde{K}%
_{t}^{n}-K_{t}|] \\
& \leq \hat{\mathbb{E}}[\mathbb{\hat{E}}_{t}[|K_{s}-\tilde{K}_{s}^{n}|]]+%
\hat{\mathbb{E}}[|\tilde{K}_{t}^{n}-K_{t}|] \\
& =\hat{\mathbb{E}}[|K_{s}-\tilde{K}_{s}^{n}|]+\hat{\mathbb{E}}[|\tilde {K}%
_{t}^{n}-K_{t}|]\rightarrow0.
\end{align*}
Thus we get $\mathbb{\hat{E}}_{t}[K_{s}]=K_{t}$. For $%
Y_{t_{1}}=u(t_{1},B_{t_{1}},0)$, we can use the same method as Part 1 on $%
[0,t_{1}]$.

Step 2. $f(t,\omega, y,z)= \sum_{i=1}^{N} f^{i}h^{i}(y,z)$ with $f^{i}\in
M^{0}_{G}(0,T)$ and $h^{i}\in C_{0}^{\infty}(\mathbb{R}^{2})$.

The analysis is similar to Part 2 of Step 1.

Step 3. $f(t,\omega, y,z)= \sum_{i=1}^{N} f^{i}h^{i}(y,z)$ with $f^{i}\in
M^{\beta}_{G}(0,T)$ bounded and $h^{i}\in C_{0}^{\infty}(\mathbb{R}^{2})$, $%
h^{i}\geq0$ and $\sum_{i=1}^{N}h^{i}\leq1$.

Choose $f^{i}_{n}\in M^{0}_{G}(0,T)$ such that $|f^{i}_{n}|\leq
\|f^{i}\|_{\infty}$ and $\sum_{i=1}^{N}
\|f^{i}_{n}-f^{i}\|_{M^{\beta}_{G}}<1/n$. Set $f_{n}=\sum_{i=1}^{N}
f^{i}_{n}h^{i}(y,z)$, which are uniformly Lipschitz. Let $(Y^{n},Z^{n},
K^{n})$ be the solution of equation (\ref{e3}) with generator $f_{n}$.

Noting that
\begin{equation*}
\hat{f}%
_{s}^{m,n}:=|f_{m}(s,Y_{s}^{n},Z_{s}^{n})-f_{n}(s,Y_{s}^{n},Z_{s}^{n})|\leq
\sum_{i=1}^{N}|f_{n}^{i}-f^{i}|+\sum_{i=1}^{N}|f_{m}^{i}-f^{i}|=:\hat{f}_{n}+%
\hat{f}_{m},
\end{equation*}
we have, for any $1<\alpha<\beta$,
\begin{equation*}
\mathbb{\hat{E}}_{t}[(\int_{0}^{T}\hat{f}_{s}^{m,n}ds)^{\alpha}]\leq \mathbb{%
\hat{E}}_{t}[(\int_{0}^{T}(|\hat{f}_{n}(s)|+|\hat{f}_{m}(s)|)ds)^{\alpha}].
\end{equation*}
Thus by Theorem \ref{the2.10}, we get $||\int_{0}^{T}\hat{f}%
_{s}^{m,n}ds||_{\alpha,\mathcal{E}}\rightarrow0$ as $m,n\rightarrow \infty$
for any $\alpha \in(1,\beta)$. By Proposition \ref{pro3.5} we know that $%
\{Y^{n}\}$ is a cauchy sequence under the norm $\Vert \cdot
\Vert_{S_{G}^{\alpha}}$. By Proposition \ref{pro3.4} and Proposition \ref%
{pron2}, $\{Z^{n}\}$ is a cauchy sequence under the norm $\Vert \cdot
\Vert_{H_{G}^{\alpha}}$. In order to show that $\{K_{T}^{n}\}$ is a cauchy
sequence under the norm $\Vert \cdot \Vert_{L_{G}^{\alpha}}$, it suffices to
prove $\{ \int_{0}^{T}f_{n}(s,Y_{s}^{n},Z_{s}^{n})ds\}$ is a cauchy sequence
under the norm $\Vert \cdot \Vert_{L_{G}^{\alpha}}$. In fact,
\begin{align*}
& |f_{n}(s,Y^{n},Z^{n})-f_{m}(s,Y^{m},Z^{m})| \\
&
\leq|f_{m}(s,Y^{n},Z^{n})-f_{m}(s,Y^{m},Z^{m})|+|f_{n}(s,Y^{n},Z^{n})-f_{m}(s,Y^{n},Z^{n})|
\\
& \leq L(|\hat{Y}_{s}|+|\hat{Z}_{s}|)+\hat{f}_{n}+\hat{f}_{m},
\end{align*}
which implies the desired result.

Step 4. $f$ is bounded, Lipschitz. $|f(t,\omega, y,z)|\leq CI_{B(R)}(y,z)$
for some $C, R>0$. Here $B(R)=\{(y,z)| y^{2}+z^{2}\leq R^{2}\}$.

For any $n$, by the partition of unity theorem, there exists $%
\{h_{n}^{i}\}_{i=1}^{N_{n}}$ such that $h_{n}^{i}\in C_{0}^{\infty}(\mathbb{R%
}^{2})$, the diameter  of support
$\lambda$(supp($h_{n}^{i}$))$<1/n$, $0\leq
h_{n}^{i}\leq1$, $I_{B(R)}\leq \sum_{i=1}^{N}h_{n}^{i}\leq1$. Then $%
f(t,\omega,y,z)=\sum _{i=1}^{N}f(t,\omega,y,z)h_{n}^{i}$. Choose $%
y_{n}^{i},z_{n}^{i}$ such that $h_{n}^{i}(y_{n}^{i},z_{n}^{i})>0$. Set $%
f_{n}(t,\omega,y,z)=\sum_{i=1}^{N}f(t,\omega,y_{n}^{i},z_{n}^{i})h_{n}^{i}$.
Then
\begin{equation*}
|f(t,\omega,y,z)-f_{n}(t,\omega,y,z)|\leq \sum_{i=1}^{N}|f(t,\omega
,y,z)-f(t,\omega,y_{n}^{i},z_{n}^{i})|h_{n}^{i}\leq L/n
\end{equation*}
and
\begin{equation*}
|f_{n}(t,\omega,y,z)-f_{n}(t,\omega,y^{\prime},z^{\prime})|\leq
L(|y-y^{\prime }|+|z-z^{\prime}|+2/n).
\end{equation*}
Noting that $|f_{m}(s,Y_{s}^{n},Z_{s}^{n})-f_{n}(s,Y_{s}^{n},Z_{s}^{n})|%
\leq(L/n+L/m)$, we have
\begin{equation*}
\mathbb{\hat{E}}_{t}[|%
\int_{0}^{T}(|f_{m}(s,Y_{s}^{n},Z_{s}^{n})-f_{n}(s,Y_{s}^{n},Z_{s}^{n})|+%
\frac{2L}{m})ds|^{\alpha}]\leq T^{\alpha}(\frac{L}{n}+\frac{3L}{m})^{\alpha}.
\end{equation*}
So by Proposition \ref{pro3.5} we conclude that $\{Y^{n}\}$ is a cauchy
sequence under the norm $\Vert \cdot \Vert_{S_{G}^{\alpha}}$. Consequently, $%
\{Z^{n}\}$ is a cauchy sequence under the norm $\Vert \cdot \Vert
_{H_{G}^{\alpha}}$ by Proposition \ref{pro3.4} and Proposition \ref{pron2}.
Now we shall prove $\{ \int_{0}^{T}f_{n}(s,Y_{s}^{n},Z_{s}^{n})ds\}$ is a
cauchy sequence under the norm $\Vert \cdot \Vert_{L_{G}^{\alpha}}$. In
fact,
\begin{align*}
& |f_{n}(s,Y^{n},Z^{n})-f_{m}(s,Y^{m},Z^{m})| \\
&
\leq|f_{m}(s,Y^{n},Z^{n})-f_{m}(s,Y^{m},Z^{m})|+|f_{n}(s,Y^{n},Z^{n})-f_{m}(s,Y^{n},Z^{n})|
\\
& \leq L(|\hat{Y}_{s}|+|\hat{Z}_{s}|+2/m)+L/n+L/m,
\end{align*}
which implies the desired result.

Step 5. $f$ is bounded, Lipschitz.

For any $n\in \mathbb{N}$, choose $h^{n}\in C_{0}^{\infty}(\mathbb{R}^{2})$
such that $I_{B(n)}\leq h^{n}\leq I_{B(n+1)}$ and $\{h^{n}\}$ are uniformly
Lipschitz w.r.t. $n$. Set $f_{n}=fh^{n}$, which are uniformly Lipschitz.
Noting that for $m>n$
\begin{align*}
& |f_{m}(s,Y_{s}^{n},Z_{s}^{n})-f_{n}(s,Y_{s}^{n},Z_{s}^{n})| \\
& \leq|f(s,Y_{s}^{n},Z_{s}^{n})|I_{[|Y_{s}^{n}|^{2}+|Z_{s}^{n}|^{2}>n^{2}]}
\\
& \leq \Vert f\Vert_{\infty}\frac{|Y_{s}^{n}|+|Z_{s}^{n}|}{n},
\end{align*}
we have
\begin{align*}
& \mathbb{\hat{E}}_{t}[(%
\int_{0}^{T}|f_{m}(s,Y_{s}^{n},Z_{s}^{n})-f_{n}(s,Y_{s}^{n},Z_{s}^{n})|ds)^{%
\alpha}] \\
& \leq \frac{\Vert f\Vert_{\infty}^{\alpha}}{n^{\alpha}}\mathbb{\hat{E}}%
_{t}[(\int_{0}^{T}|Y_{s}^{n}|+|Z_{s}^{n}|ds)^{\alpha}] \\
& \leq \frac{\Vert f\Vert_{\infty}^{\alpha}}{n^{\alpha}}C(\alpha,T)\mathbb{%
\hat {E}}_{t}[\int_{0}^{T}|Y_{s}^{n}|^{\alpha}ds+(%
\int_{0}^{T}|Z_{s}^{n}|^{2}ds)^{\alpha/2}],
\end{align*}
where $C(\alpha,T):=2^{\alpha-1}(T^{\alpha-1}+T^{\alpha/2}])$.

So by Theorem \ref{the2.10} and Proposition \ref{pro3.4} we get $||\int
_{0}^{T}\hat{f}_{s}^{m,n}ds||_{\alpha,\mathcal{E}}\rightarrow0$ as $%
m,n\rightarrow \infty$ for any $\alpha \in(1,\beta)$. By Proposition \ref%
{pro3.5}, we conclude that $\{Y^{n}\}$ is a cauchy sequence under the norm $%
\Vert \cdot \Vert_{S_{G}^{\alpha}}$. Consequently, $\{Z^{n}\}$ is a cauchy
sequence under the norm $\Vert \cdot \Vert_{H_{G}^{\alpha}}$. Now it
suffices to prove $\{ \int_{0}^{T}f_{n}(s,Y_{s}^{n},Z_{s}^{n})ds\}$ is a
cauchy sequence under the norm $\Vert \cdot \Vert_{L_{G}^{\alpha}}$. In
fact,
\begin{align*}
& |f_{n}(s,Y^{n},Z^{n})-f_{m}(s,Y^{m},Z^{m})| \\
&
\leq|f_{m}(s,Y^{n},Z^{n})-f_{m}(s,Y^{m},Z^{m})|+|f_{n}(s,Y^{n},Z^{n})-f_{m}(s,Y^{n},Z^{n})|
\\
& \leq L(|\hat{Y}_{s}|+|\hat{Z}%
_{s}|)+|f(s,Y_{s}^{n},Z_{s}^{n})|1_{[|Y_{s}^{n}|+|Z_{s}^{n}|>n]},
\end{align*}
which implies the desired result by Proposition \ref{pro3.4}.

Step 6. For the general $f$.

Set $f_{n}=[f\vee(-n)]\wedge n$, which are uniformly Lipschitz. Choose $%
0<\delta<\frac{\beta-\alpha}{\alpha}\wedge1$. Then $\alpha<\alpha^{\prime
}=(1+\delta)\alpha<\beta$. Since for $m>n$
\begin{equation*}
|f_{n}(s,Y^{n},Z^{n})-f_{m}(s,Y^{n},Z^{n})|%
\leq|f(s,Y_{s}^{n},Z_{s}^{n})|I_{[|f(s,Y_{s}^{n},Y_{s}^{n})|>n]}\leq \frac{1%
}{n^{\delta}}|f(s,Y_{s}^{n},Z_{s}^{n})|^{1+\delta},
\end{equation*}
we have
\begin{align*}
& \mathbb{\hat{E}}_{t}[(%
\int_{0}^{T}|f_{n}(s,Y^{n},Z^{n})-f_{m}(s,Y^{n},Z^{n})|ds)^{\alpha}] \\
& \leq \frac{1}{n^{\alpha \delta}}\mathbb{\hat{E}}_{t}[(%
\int_{0}^{T}|f(s,Y_{s}^{n},Z_{s}^{n})|^{1+\delta}ds)^{\alpha}], \\
& \leq \frac{C(\alpha,T,L,\delta)}{n^{\alpha \delta}}\mathbb{\hat{E}}%
_{t}[\int_{0}^{T}|f(s,0,0)|^{\alpha^{\prime}}ds+\int_{0}^{T}|Y_{s}^{n}|^{%
\alpha^{\prime}}ds+(\int_{0}^{T}|Z_{s}^{n}|^{2}ds)^{\frac{\alpha^{\prime}}{2}%
}],
\end{align*}
where $C(\alpha,T,L,\delta):=3^{\alpha^{\prime}-1}(T^{\alpha-1}+L^{\alpha
^{\prime}}T^{\frac{\alpha(1-\delta)}{2}}+T^{\alpha-1}L^{\alpha^{\prime}})$.
So by Theorem \ref{the2.10} and Proposition \ref{pro3.4} we get $%
||\int_{0}^{T}\hat{f}_{s}^{m,n}ds||_{\alpha,\mathcal{E}}\rightarrow0$ as $%
m,n\rightarrow \infty$ for any $\alpha \in(1,\beta)$. By Proposition \ref%
{pro3.5}, we know that $\{Y^{n}\}$ is a cauchy sequence under the norm $%
\Vert \cdot \Vert _{S_{G}^{\alpha}}$. And consequently $\{Z^{n}\}$ is a
cauchy sequence under the norm $\Vert \cdot \Vert_{H_{G}^{\alpha}}$. Now we
prove $\{ \int_{0}^{T}f_{n}(s,Y_{s}^{n},Z_{s}^{n})ds\}$ is a cauchy sequence
under the norm $\Vert \cdot \Vert_{L_{G}^{\alpha}}$. In fact,
\begin{align*}
& |f_{n}(s,Y^{n},Z^{n})-f_{m}(s,Y^{m},Z^{m})| \\
&
\leq|f_{m}(s,Y^{n},Z^{n})-f_{m}(s,Y^{m},Z^{m})|+|f_{n}(s,Y^{n},Z^{n})-f_{m}(s,Y^{n},Z^{n})|
\\
& \leq L(|\hat{Y}_{s}|+|\hat{Z}_{s}|)+\frac{3^{\delta}}{n^{\delta}}%
(|f_{s}^{0}|^{1+\delta}+|Y_{s}^{n}|^{1+\delta}+|Z_{s}^{n}|^{1+\delta}),
\end{align*}
which implies the desired result by Proposition \ref{pro3.4}.
\end{proof}

Moreover, we have the following result.

\begin{theorem}
\label{the4.4} Assume that $\xi \in L_{G}^{\beta}(\Omega_{T})$ for some $%
\beta>1$ and $f$, $g$ satisfy (H1) and (H2). Then equation (\ref{e1}) has a
unique solution $(Y,Z,K)$. Moreover, for any $1<\alpha<\beta$ we have $Y\in
S^\alpha_G(0,T)$, $Z\in H_{G}^{\alpha}(0,T)$ and $K_{T}\in L_{G}^{\alpha
}(\Omega_{T})$.
\end{theorem}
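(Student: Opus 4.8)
The plan is to follow the proof of Theorem \ref{the4.1} essentially line by line, since the only structural novelty introduced by the term $\int_t^T g(s,Y_s,Z_s)\,d\langle B\rangle_s$ is a modification of the PDE used in the base case. A tempting shortcut is to absorb $g$ into the generator: writing $L_t=\int_0^t g(s,Y_s,Z_s)\,d\langle B\rangle_s-\int_0^t 2G(g(s,Y_s,Z_s))\,ds$, which is a decreasing $G$-martingale because $g\,d\langle B\rangle_s\le 2G(g)\,ds$ q.s., one may rewrite (\ref{e1}) as an equation of type (\ref{e3}) with generator $\tilde f=f+2G(g)$ and ``martingale part'' $K-L$. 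This reduction fails, however, because $K-L$ is a difference of two decreasing $G$-martingales and hence need not be a decreasing $G$-martingale, so Theorem \ref{the4.1} cannot be invoked directly. I would instead carry the $d\langle B\rangle$-term through the whole argument.

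First I would treat the base case $f=h_1(y,z)$, $g=h_2(y,z)$ with $h_1,h_2\in C_0^\infty(\mathbb R^2)$ and $\xi=\varphi(B_T)$. Setting $Y_t=u(t,B_t)$, $Z_t=\partial_x u(t,B_t)$ and applying It\^o's formula, one checks that the correct equation for $u$ is the fully nonlinear PDE
\[
\partial_t u+G\bigl(\partial_{xx}^2 u+2g(u,\partial_x u)\bigr)+f(u,\partial_x u)=0,\qquad u(T,x)=\varphi(x),
\]
the associated nonincreasing $G$-martingale being $K_t=\int_0^t\eta_s\,d\langle B\rangle_s-\int_0^t 2G(\eta_s)\,ds$ with $\eta_s=\tfrac12\partial_{xx}^2 u+g(u,\partial_x u)$; here one uses $2G(\tfrac12 A+g)=G(A+2g)$. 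Since $G$ is monotone and non-degenerate ($G(A)-G(B)\ge\underline\sigma^2(A-B)$ for $A\ge B$) and $f,g$ are Lipschitz, the operator $X\mapsto G(X+2g(u,p))+f(u,p)$ is uniformly parabolic with Lipschitz dependence on the lower-order terms, so the interior estimate of Krylov (Theorem 6.4.3 in \cite{Kr}) again gives a solution with $\|u\|_{C^{1+\alpha/2,2+\alpha}([0,T-\kappa]\times\mathbb R)}<\infty$. The spatial Lipschitz bound $|u(t,x)-u(t,y)|\le \hat L|x-y|$ and the time-H\"older bound are obtained as in Part 1 of Theorem \ref{the4.1}, comparing $u$ with a shifted solution and an explicit supersolution, now taking the exponential rate large enough to dominate the Lipschitz constants of both $f$ and $2G(g)$.

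Next I would extend the a priori estimates of Section 3 to (\ref{e1}), as announced there. Applying It\^o to $|Y_t|^2$, respectively to $(|Y_t|^2+\epsilon_\alpha)^{\alpha/2}e^{\gamma t}$, now produces the extra term $\int_t^T 2Y_s g(s,Y_s,Z_s)\,d\langle B\rangle_s$; since $d\langle B\rangle_s\le\overline\sigma^2\,ds$ and $|g|\le |g(\cdot,0,0)|+L(|Y|+|Z|)$, this term is dominated exactly as the $f$-term, so Propositions \ref{pron1}--\ref{pro3.5} hold verbatim with constants now also depending on $\overline\sigma$. With these in hand, Steps 2--6 of Theorem \ref{the4.1} go through unchanged: the partition-of-unity construction produces approximating solutions whose $K$-parts are decreasing $G$-martingales by Lemma \ref{lem3.3}, the martingale property passes to the limit via Proposition \ref{proA.8}, and the successive approximations of Steps 2--6 reduce the general $(f,g)$ to the base case. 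Uniqueness then follows from the extended Proposition \ref{pro3.5}: if the data coincide then $\hat\xi=0$ and $\hat f\equiv0$, forcing $\hat Y=0$, then $\hat Z=0$ and finally $\hat K=0$.

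The main obstacle is the base case: one must confirm that the fully nonlinear PDE with $g$ appearing \emph{inside} $G$ still falls within the scope of the quoted regularity and comparison results, and that the choice $\eta_s=\tfrac12\partial_{xx}^2 u+g$ indeed makes $K$ a decreasing $G$-martingale (which holds since $\eta\,d\langle B\rangle-2G(\eta)\,ds\le0$). The remaining work is bookkeeping: tracking the extra $\overline\sigma^2$-factors through the estimates and verifying that the partition-of-unity limit of the $g$-dependent $K^n$ is still a decreasing $G$-martingale, which is again guaranteed by Lemma \ref{lem3.3}.
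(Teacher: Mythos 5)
Your proposal is correct and takes essentially the same route as the paper: the paper's entire proof of Theorem \ref{the4.4} is the remark that it is ``similar to that of Theorem \ref{the4.1}'', and your adaptation --- replacing the base-case PDE (\ref{e4}) by $\partial_t u+G(\partial_{xx}^2u+2g(u,\partial_x u))+f(u,\partial_x u)=0$ with $\eta_s=\tfrac12\partial_{xx}^2u+g$ (via $2G(\tfrac12 A+g)=G(A+2g)$), and dominating the extra term in the Section~3 estimates through $d\langle B\rangle_s\le\overline{\sigma}^2\,ds$ --- is precisely the intended modification, and your caveat that naively absorbing $g$ into the generator fails (since $K-L$ need not be a decreasing $G$-martingale) is accurate.
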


\begin{proof}
The proof is similar to that of Theorem \ref{the4.1}.
\end{proof}

\begin{remark}
\label{rem4.5} The above results still hold for the case $d>1$.
\end{remark}

%%% ----------------------------------------------------------------------

%%%%%%%%%%%%%%%%%%%%%%%²Î¿¼ÎÄÏ×
\renewcommand{\refname}{\large References}{\normalsize \ }

\end{document}